\algnewcommand{\algorithmicgoto}{\textbf{go to}}%
\algnewcommand{\Goto}[1]{\algorithmicgoto~\ref{#1}}%
\algnewcommand{\LineComment}[1]{\Statex \(\triangleright\) #1}
\newenvironment{assump}[1]
  {\innercustomthm}
  {\endinnercustomthm}
\newtheorem{prop}{Proposition} 
\newtheorem{cor}{Corollary}
\newtheorem{thm}{Theorem}
\newtheorem{lem}{Lemma}
\newtheorem{defn}{Definition}
\newtheorem{rem}{Remark}
\newtheorem{spc}{Special Case}
\providecommand{\doverline}[1]{\overline{\overline{#1}}}
\begin{document}

\title{Simultaneous Input and State Estimation for Linear Time-Varying Continuous-Time Stochastic Systems$^\star$}

\author{%
Sze Zheng Yong$\,^{a,\dagger}$ \quad\quad Minghui Zhu$\,^b$ \quad\quad Emilio Frazzoli$\,^{c,\dagger}$\\
\thanks{
$^a$ S.Z Yong is with the School
for Engineering of Matter, Transport and Energy, Arizona State University, Tempe, AZ 85281, USA (e-mail: szezheng.yong@asu.edu).}
\thanks{
$^b$ M. Zhu is with the Department of Electrical Engineering, Pennsylvania State University, University Park, PA 16802, USA (e-mail: muz16@psu.edu).}
\thanks{
$^c$ E. Frazzoli is with the Institute for Dynamic Systems and Control, Swiss Federal Institute of Technology (ETH), CH-8092 Z\"{u}rich, Switzerland (e-mail: efrazzoli@ethz.ch).}
\thanks{
$^\dagger$ This work was done when S.Z. Yong and E. Frazzoli were with the
Laboratory for Information and Decision Systems at Massachusetts Institute
of Technology, Cambridge, MA 02139, USA.
}
\thanks{
$^\star$ Extended version of an IEEE Transactions on Automatic Control paper with the same title.
}
}

\maketitle

\begin{abstract}
In this paper, we present an optimal filter for linear time-varying continuous-time stochastic systems that simultaneously estimates the states and unknown inputs in an unbiased minimum-variance sense. We first show that the unknown inputs cannot be estimated without additional assumptions. Then, we  discuss two complementary variants of the filter: (i) for the case when an additional measurement containing information about the state derivative 
is available, and (ii) for the case without the additional measurement 
but the input signals are assumed to be sufficiently smooth and have bounded derivatives. 
Conditions for uniform asymptotic stability and the existence of a steady-state solution for the proposed filter, as well as the convergence rate of the state and input estimate biases are given. Moreover, we show that a principle of separation of estimation and control holds and that the unknown inputs may be rejected. Two examples, including a nonlinear vehicle reentry example, are given to illustrate that our filter is applicable even when some strong assumptions do not hold. 
\end{abstract}

\section{Introduction}

When the inputs to linear continuous-time stochastic systems are known, the Kalman-Bucy filter \cite{kalman.bucy.1961} provides the optimal state filtering solution from noisy measurements. However, in many applications, the disturbance inputs or the unknown parameters are not modeled by a zero-mean, Gaussian white noise. For instance, (semi-)autonomous vehicles do not have knowledge of the control inputs of other vehicles. 
The inability to reliably track the states of these vehicles, or to estimate the unknown inputs may lead to a collision or suboptimal performance, etc.
Similar problems are found across many disciplines, e.g., meteorology \cite{Kitanidis.1987}, physiology \cite{DeNicolao.1997}, fault detection and diagnosis \cite{Patton.1989} and machine tool applications \cite{Corless.98}; hence, a solution to this problem is beneficial for a wide range of applications.

\emph{Literature review.} 
Research in this field began with state estimation of systems with unknown biases 
and unknown disturbance of known dynamics, 
but has since moved towards state estimation with arbitrary unknown inputs. An optimal filter that only estimates the system states in a minimum-variance unbiased (MVU) sense estimate is first developed for linear discrete-time stochastic systems with unknown inputs in \cite{Kitanidis.1987,Darouach.1997,Hou.1998,Darouach.2003,Cheng.2009}. This development was followed by the design of optimal simultaneous input and state estimation filters, with the objective of \emph{concurrently} obtaining minimum-variance unbiased estimates for both the states and the unknown disturbance inputs to the system, as researchers realize that the information about the unknown input is often as important as state information. However, initial research has been focused on particular classes of linear discrete-time systems with unknown inputs 
(see e.g., \cite{Gillijns.2007b,Fang.2008,Fang.2011,Yong.Zhu.Frazzoli.2013} and references therein). 
Only recently has a general framework been proposed in \cite{Yong.Zhu.ea.Automatica16,Yong.Zhu.ea.CDC15_delay} for optimally estimating both state and unknown input of linear discrete-time stochastic systems with unknown inputs. 

To our best knowledge, the problem of simultaneous state and input estimation for linear continuous-time stochastic systems has not been addressed. Thus, we turn to the literature on unknown input observer designs for deterministic systems for inspiration.
As it turns out, the accessibility of output derivatives plays an important role for the estimation of the unknown inputs  in observer designs.
Some observer designs (e.g., \cite{Hou.1998b}) 
differentiates the output measurements, 
whereas other designs (e.g., \cite{Corless.98,Xiong.2003}) 
rely solely on output measurements without differentiation, although these observers can only asymptotically estimate the unknown input to any degree of accuracy instead of exact asymptotic estimation.

\emph{Contributions.} 
We propose a \emph{stable} and \emph{optimal} state and unknown input filter in the minimum-variance unbiased sense for linear \emph{time-varying} continuous-time stochastic systems and provide the convergence rate of the proposed filter. First, we show via a similarity transformation that the unknown input is in general not directly observable from the output signal and hence, unlike its discrete-time counterpart, cannot be estimated in a meaningful way without additional assumptions. Then, taking a leaf out of deterministic observer designs (e.g., \cite{Corless.98,Hou.1998b}), we provide an analysis of two sets of assumptions under which the input can be estimated: (i) when an additional measurement containing information about the state derivative or `output derivative' is available, and (ii) when no additional measurement is accessible but the input signals are sufficiently smooth and have bounded derivatives. 

Two complementary variants of the optimal filter are presented for each of these assumptions. In the latter case, as with observer designs in \cite{Corless.98,Xiong.2003}, where exact asymptotic estimation is not available, 
we propose a filter variant that still estimates the system states in an MVU sense, but the unknown inputs are only estimated to any degree of accuracy when compared to the MVU input estimate obtained if the exact output derivative is known. The proposed filter is derived by constructing a `virtual' equivalent system without unknown inputs\footnote{The proof technique of constructing a virtual system, while is rather common for controller designs, is to our knowledge novel to filter designs.}. Although not implementable, this ÔvirtualÕ system without unknown inputs has provably the same properties as our proposed filter, allowing us to derive analogous properties of our filter to that of the well-known Kalman-Bucy filter  \cite{kalman.bucy.1961}. Moreover, by limiting case approximations of the optimal discrete-time filter in \cite{Yong.Zhu.ea.Automatica16}, we find that the discrete-time filter implicitly uses finite difference to obtain an `output derivative'.

Moreover,  
the derivatives of the system matrices may be needed, where the main challenge lies in the computation of derivatives of the singular value decomposed matrices of the direct feedthrough matrix. A solution to this problem is presented in Section \ref{sec:SVD}, which, as a by-product, provides a novel alternative approach to \cite{Wright.1992,Bunse.91} for computing analytic singular value decomposition with differential equations. 

Finally, we show that a principle of separation of estimation and control also exists for linear systems with unknown inputs, and that the unknown inputs may be rejected, if desired. Hence, we can combine the proposed stable filter for state and input estimation, with any independently designed stable state feedback controller to achieve a stable closed loop system, which we illustrate with a vehicle reentry example with nonlinear dynamics \cite{Julier.2004} and a helicopter hover control example in windy environments even when some strong assumptions in our paper do not hold. A preliminary version of this paper is presented in \cite{Yong.Zhu.Frazzoli.ACC15} where the special case of linear time-invariant systems is studied. 

\emph{Notation.} We first summarize the notation used in the paper. $\mathbb{R}^n$ denotes the $n$-dimensional Euclidean space.
For a vector $v \in \mathbb{R}^n$, its $r^{th}$ derivative is denoted by $v^{(r)}$ and its expectation by $\mathbb{E}[v]$. Given a matrix $M \in \mathbb{R}^{p \times q}$, its transpose, inverse, Moore-Penrose pseudoinverse, norm, trace, rank are given by $M^\top$, $M^{-1}$, $M^{\dagger}$,  $\|M \|$, ${\rm tr}(M)$ and ${\rm rk}(M)$.
For a symmetric matrix $S$, $S \succ 0$ ($S \succeq 0$) is positive (semi-)definite. 

\section{Problem Statement} \label{sec:Problem}

We consider the following model representation of linear time-varying continuous-time stochastic systems
\begin{align} \label{eq:system}
\hspace{-0.1cm} \begin{array}{ll}
\dot{x} (t) &=A(t) x (t) +B(t) u(t) +G(t) d(t) + W(t) w(t),\\
y(t) &= C(t) x(t) +D(t) u(t) + H(t) d(t) + v(t), \end{array} 
\end{align} \noindent where $x(t) \in \mathbb{R}^n$ is the state vector at time $t$, $u (t) \in \mathbb{R}^m$ a known input vector, $d (t) \in \mathbb{R}^p$ an unknown input vector, $y(t) \in \mathbb{R}^l$ the measurement vector, $w(t) \in \mathbb{R}^q$ the process noise and $v(t) \in \mathbb{R}^l$ the measurement noise. 
The matrices $A(t)$, $B(t)$, $G(t)$, $C(t)$, $D(t)$, and $W(t)$
are smooth, bounded and known, whereas $H(t)$ is analytic (i.e., infinitely differentiable and convergent) and known. $x(t_0)=x_0$ is also assumed to be independent of $v(t)$ and $w(t)$ for all $t$ and an initial state estimate $\hat{x}(t_0):=\hat{x}_0$ is available with covariance matrix $\mathcal{P}_{0}^x$. Without loss of generality, we assume that $n \geq l \geq 1$, $l \geq p \geq 0$ and $m \geq 0$ and the current time $t$ is strictly positive. Our fairly general time-varying system formulation facilitates linearization-based nonlinear filtering techniques, as is demonstrated in our simulation example in Section \ref{sec:reentry}. To simplify notations, we often omit the explicit time-dependence of signals when it is clear from context.

It has been observed in \cite{Hou.1998b} that, except for some trivial cases (e.g., $H$ has full rank), derivatives of outputs are needed when the reconstruction of the unknown input is desired for deterministic systems. Therefore, we expect stochastic systems to similarly require some form of additional signal information that is a counterpart of the output derivative in the deterministic case. With this in mind, we first show via a similarity transformation in Proposition \ref{prop1} that the unknown input is indeed not directly observable from the output signal and thus, unlike its discrete-time counterpart, cannot be estimated in a meaningful way without additional assumptions.  

\emph{Objective.} The objective of this paper is hence to design an optimal recursive filter algorithm which simultaneously estimates the system state $x(t)$ and the unknown input $d(t)$ based on an initial state estimate $\hat{x}_0$ with covariance $\mathcal{P}^x_0$, and measured outputs up to time $t$, $y(\tau)$ for all $0 \leq \tau \leq t$, under some appropriate assumptions (to be explored in Section \ref{sec:MVUE}). No prior knowledge of the dynamics of $d(t)$ is assumed.

\section{Preliminary Material} \label{sec:prelim}

We begin by providing the definition of uniform complete controllability and observability:

\begin{defn}[Uniform Complete Controllability \& Observability\cite{kalman.bucy.1961,Kalman.1960}] \label{def:uco}
Let $X(t)$ be bounded. The pair $(X(t),Y(t))$ is uniformly completely controllable, if $\exists \epsilon>0$ and $\mu_1(\epsilon)>0$, $\mu_2(\epsilon)>0$, such that for all $t \geq t_0$, such that
$\mu_1(\epsilon) \leq \int^t_{t-\epsilon} \Phi_{X(t)}(t,s) Y(s) Y(s)^\top \Phi_{X(t)}^\top(t,s) ds \leq \mu_2(\epsilon)$,
where $\Phi_{X(t)}(t,s)$ is the transition matrix of the system $\dot{x}(t)=X(t) x(t)+Y(t) u(t)$ and $y(t)=Z(t) x(t)$. Similarly, the pair $(X(t),Z(t))$ is uniformly completely observable, if its dual pair $(X^\top(t),Z^\top(t))$ is uniformly completely controllable.
\end{defn}

In the following, we present the similarity transformation that decouples the output signal with respect to the unknown inputs, revealing that a certain component of the unknown inputs cannot be observed from the output signal. Then, we introduce a novel alternative approach to \cite{Wright.1992,Bunse.91} to obtain the derivative of singular value decomposed matrices of time-varying $H(t)$ that is needed for the development of our filter.

\subsection{Decoupling via Similarity Transformation} \label{sec:similar}
Similar to its discrete-time counterpart \cite{Yong.Zhu.ea.Automatica16}, we first carry out a transformation of the system. Let ${\rm rk} (H)=p_{H}$. Then, we rewrite 
$H$ using singular value decomposition (SVD) as
\begin{align} \label{eq:H}
H=\begin{bmatrix}U_{1}& U_{2} \end{bmatrix} \begin{bmatrix} \Sigma & 0 \\ 0 & 0 \end{bmatrix} \begin{bmatrix} V_{1}^{\, \top} \\ V_{2}^{\, \top} \end{bmatrix} = U_1 \Sigma V_1^\top=:H_1 V_1^\top,
\end{align} \noindent
where $\Sigma \in \mathbb{R}^{p_{H} \times p_{H}}$ is a diagonal matrix of full rank, with $U_{1} \in \mathbb{R}^{l \times p_{H}}$, $U_{2} \in \mathbb{R}^{l \times (l-p_{H})}$, $V_{1} \in \mathbb{R}^{p \times p_{H}}$, $V_{2} \in \mathbb{R}^{p \times (p-p_{H})}$ and $0$ matrices of appropriate dimensions. $U:=\begin{bmatrix} U_{1} & U_{2} \end{bmatrix}$ and $V:=\begin{bmatrix} V_{1} & V_{2} \end{bmatrix}$ are unitary matrices. Note that when $H$ is the zero matrix, $\Sigma$, $U_{1}$ and $V_{1}$ are empty matrices, and $U_{2}$ and $V_{2}$ are arbitrary unitary matrices. 
Then, we define two orthogonal components of the unknown input given by
$d_{1}:=V_{1}^\top d$ and 
$d_{2}:=V_{2}^\top d$.
Since $V$ is unitary, $d =V_{1} d_{1}+V_{2} d_{2}$. Next, we decouple the output $y$ using a nonsingular transformation
\begin{align} \label{eq:T_k}
T =\begin{bmatrix} T_{1} \\ T_{2} \end{bmatrix}
= \begin{bmatrix} I_{p_{H}} & -U_{1}^\top R U_{2} (U_{2}^\top R U_{2})^{-1}\\ 0 & I_{(l-p_{H}) } \end{bmatrix} \begin{bmatrix} U_{1}^\top \\ U_{2}^\top \end{bmatrix},
\end{align} \noindent
to obtain
\begin{align}
\begin{array}{l}
\dot{x}  =A x +B u+G_{1} d_{1} +G_{2} d_{2} + W w, \\ 
z_{1}=T_{1} y = C_{1} x + D_{1} u +\Sigma d_{1} + v_{1},\\ z_{2}=T_{2} y = C_{2} x + D_{2} u + v_{2},\\
\end{array}\hspace{-0.5cm}   \label{eq:sys}
\end{align} \noindent
where 
$C_{1} :=T_{1} C$, $C_{2} := T_{2} C = U_{2}^\top C$, 
$D_{1} :=T_{1} D$, $D_{2} := T_{2} D = U_{2}^\top D$, 
$G_{1} :=G V_{1}$, $G_{2} :=G V_{2}$, 
$v_{1} :=T_{1} v$ and $v_{2} := T_{2} v = U_{2}^\top v$. 
The transform was also chosen such that the measurement noise terms for the decoupled outputs are uncorrelated with each other, the process noise and the initial state, with the non-zero autocorrelations of $v_{1}$ and $v_{2}$ given by $R_1:=T_{1} R T_{1}^\top \succ 0$ and $R_2:=T_{2} R T_{2}^\top \succ 0$, respectively. 

With the above decoupling of the output signals with respect to the unknown inputs, we obtain the following proposition:

\begin{prop} \label{prop1}
The output $y$ contains insufficient information to fully estimate the signal $d$, specifically the component $d_2$, which does not appear in $z_1$ and $z_2$ (and $y=T^{-1} [z_1^\top \ z_2^\top]^\top$).
\end{prop}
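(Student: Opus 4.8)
The plan is to read the claim off the decoupled system \eqref{eq:sys} once it is established that nothing is lost in replacing the raw measurement $y$ by the pair $(z_1,z_2)$. First I would check that the matrix $T$ of \eqref{eq:T_k} is nonsingular: it is the product of the unitary matrix $[U_1^\top\ U_2^\top]^\top$ with a block upper-triangular matrix whose two diagonal blocks are identities, hence invertible; consequently $[z_1^\top\ z_2^\top]^\top = Ty$ is a static, invertible reparametrization of the output, $y = T^{-1}[z_1^\top\ z_2^\top]^\top$, and any causal estimator built from $\{y(\tau): t_0 \le \tau \le t\}$ is equivalently a functional of $\{(z_1(\tau), z_2(\tau)): t_0 \le \tau \le t\}$.

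Next I would simply inspect the structure of \eqref{eq:sys}. Since $V = [V_1\ V_2]$ is unitary, $d = V_1 d_1 + V_2 d_2$ with $d_1 = V_1^\top d$ and $d_2 = V_2^\top d$; and looking at the transformed output equations $z_1 = C_1 x + D_1 u + \Sigma d_1 + v_1$ and $z_2 = C_2 x + D_2 u + v_2$, the component $d_2$ occurs in neither. Its \emph{only} appearance anywhere in \eqref{eq:sys} is the term $G_2 d_2$ in $\dot x = Ax + Bu + G_1 d_1 + G_2 d_2 + Ww$, so $d_2$ affects the measurements solely through its contribution to the state derivative $\dot x$; extracting it would therefore require access to $\dot x$ --- equivalently, to a derivative of the output --- which $y$ by itself does not supply. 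I would also record the degenerate exception: when ${\rm rk}(H)=p$ (so $p_{H}=p$ and $V_2$ is empty) the component $d_2$ is absent, $d = V_1 d_1$, and all of $d$ is reflected in $z_1$ through $\Sigma d_1$; this is precisely the trivial ``$H$ full column rank'' case mentioned after \eqref{eq:system}. Whenever $p_{H} < p$, a genuine hidden component $d_2$ exists, which is the asserted insufficiency.

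The step I expect to require the most care --- the conceptual crux --- is converting ``$d_2$ enters only $\dot x$'' into a true non-estimability statement rather than a mere inconvenience. For this I would use the variation-of-constants form $x(t) = \Phi_{A}(t,t_0) x_0 + \int_{t_0}^{t} \Phi_{A}(t,s) (Bu + G_1 d_1 + G_2 d_2 + Ww)\,ds$ to observe that perturbing $d_2$ only over a vanishingly short interval $[t-\delta, t]$ changes $x|_{[t_0,t]}$, and hence $z_1|_{[t_0,t]}$, $z_2|_{[t_0,t]}$ and $y|_{[t_0,t]}$, by $O(\delta)$, while the values of $d_2$ on that interval are altered by $O(1)$; letting $\delta \to 0$ shows no causal, unbiased, finite-variance estimate of $d_2$ can be constructed from the output history. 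This is exactly the gap --- the continuous-time counterpart of the fact that here there is no discrete-time state update available to carry the hidden input component one sample forward into a later measurement --- that the extra assumptions of Section~\ref{sec:MVUE} are introduced to close. The remaining bookkeeping (the SVD-based decoupling and the noise-decorrelation properties of $T$) is routine and mirrors the discrete-time construction of \cite{Yong.Zhu.ea.Automatica16}.
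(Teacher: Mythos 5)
Your proposal is correct and follows essentially the same route as the paper, which states Proposition~\ref{prop1} as an immediate consequence of the decoupling \eqref{eq:sys}: $T$ in \eqref{eq:T_k} is invertible, so $(z_1,z_2)$ carries the same information as $y$, and $d_2=V_2^\top d$ simply does not appear in either transformed output equation. Your additional variation-of-constants perturbation argument (showing that an $O(1)$ change of $d_2$ on $[t-\delta,t]$ alters the output history only by $O(\delta)$) supplies a rigorous non-estimability justification that the paper leaves implicit, and your remark on the full-rank-$H$ case matches the paper's noted trivial exception.
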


\subsection{Computation of derivative of singular value decomposed matrices of time-varying ${H}(t)$} \label{sec:SVD}

To perform the decoupling transformation, 
the computation of the derivative of singular value decomposed matrices of the time-varying $H(t)$ may be needed, which we now derive. 
With the assumption that the matrix $H$ is analytic, \cite{Bunse.91} established the existence of a singular value decomposition of $H$ where the factors are also analytic functions, which they termed analytic singular value decomposition (ASVD).  This has the implication that $U_1$, $U_2$, $V_1$, $V_2$ and $\Sigma$ are differentiable. Next, we provide an approach motivated by \cite{Wright.1992,Bunse.91} 
for obtaining the signal derivatives, $\dot{U}_1$ and $\dot{\Sigma}$,
which are required to compute $\dot{H}_1:=\dot{U}_1 \Sigma + U_1 \dot{\Sigma}$,
as well as $\dot{U}_2$, $\dot{\Sigma}$, $\dot{V}_1$ and $\dot{V}_2$. 
For simplicity, we shall first assume
that the rank of matrix $H$ is constant, and that all singular values
remain positive. The generalization to the case when the singular values can become zero will be discussed in Remark \ref{rem4}. 

\begin{thm} \label{thm:Hdot}
Let $\Sigma=diag(\sigma_1, \sigma_2, \hdots, \sigma_{p_H})$ be such that $\sigma_i >0$ for all $ i =1,2, \hdots, p_H$. Then, the singular value decomposed matrices of the known derivative of $H$ in \eqref{eq:H}  given by
$\dot{H}= \dot{U}_1 \Sigma V_1^\top + U_1 \dot{\Sigma} V_1^\top + U_1 \Sigma \dot{V}_1^\top$,
can be found using
\begin{align}
\begin{array}{rl}
\dot{\Sigma}&=diag(\dot{\sigma}_1, \dot{\sigma}_2, \hdots, \dot{\sigma}_{p_H}), \ 
\dot{U}_1= U_1 E, \quad \dot{U}_2=0, \\ 
\dot{V}_1 &=V_1 F, \quad \ \dot{V}_2=0,\end{array}\label{eq:Vdot}
\end{align} \noindent
with initial conditions determined by $H(t_0)=U_1(t_0) \Sigma(t_0) V_1^\top(t_0)$ and 
whose components for all $i =1,2, \hdots, p_H$ can be computed as follows:
\begin{align} \label{eq:sigmadot}
&\dot{\sigma}_i=(U_1^\top \dot{H} V_1)_{ii},\\
&\begin{array}{ll}E_{ii} =F_{ii}=0, \ E_{ij}=\frac{\sigma_j (U_1^\top \dot{H} V_1)_{ij}+ \sigma_i (U_1^\top \dot{H} V_1)_{ji}}{\sigma_j^2-\sigma_i^2}, \\ 
F_{ij}=\frac{\sigma_j (U_1^\top \dot{H} V_1)_{ji}+ \sigma_i (U_1^\top \dot{H} V_1)_{ij}}{\sigma_j^2-\sigma_i^2},\end{array} \label{eq:EF}
\end{align} \noindent
if $\sigma^2_i \neq \sigma^2_j$. In the case that $\sigma^2_i = \sigma^2_j$,
if we have ${\sigma}_i^{(r)} \neq {\sigma}_j^{(r)}$ for some $r$, then
the solution for $E_{ij}$ and $F_{ij}$ is unique and can be found by
differentiating \eqref{eq:H} $r$ times\footnote{The explicit equations for each case are lengthy and
interested readers are referred to \cite{Wright.1992}.}.
If all derivatives are equal, e.g., when $H$ is a constant matrix, then, with $\sigma_{i} \neq 0$,
\begin{align}
\begin{array}{rl}
E_{ij}&=-F_{ij}=\frac{(U_1^\top \dot{H} V_1)_{ij}}{2 \sigma_{i}}, \ \ \mathrm { if }\ \sigma_i=\sigma_j, \label{eq:E1}\\ 
E_{ij}&=F_{ij}=-\frac{(U_1^\top \dot{H} V_1)_{ij}}{2 \sigma_{i}}, \ \ \mathrm { if }\ \sigma_i=-\sigma_j.\end{array} 
\end{align}
\end{thm}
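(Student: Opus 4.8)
The plan is to differentiate the factorization $H = U_1\Sigma V_1^\top$ from \eqref{eq:H} and to exploit the orthonormality of the SVD factors together with their differentiability, which is guaranteed by the analytic SVD of \cite{Bunse.91}. First I would record the structural constraints that the factor derivatives must obey. Differentiating $U_1^\top U_1 = I$ and $V_1^\top V_1 = I$ shows that $E := U_1^\top\dot U_1$ and $F := V_1^\top\dot V_1$ are skew-symmetric, so in particular $E_{ii}=F_{ii}=0$. Under the standing hypothesis that ${\rm rk}(H)=p_H$ is constant and all $\sigma_i$ stay positive, the ranges of $H$ and $H^\top$ keep constant dimension and the gauge freedom in the analytic SVD can be fixed so that $U_2$, $V_2$ are (locally) constant while $\dot U_1$, $\dot V_1$ stay tangent to the column spans of $U_1$, $V_1$; this is exactly the ODE form \eqref{eq:Vdot}, namely $\dot U_1 = U_1 E$, $\dot V_1 = V_1 F$, $\dot U_2 = \dot V_2 = 0$, with initial data furnished by any SVD of $H(t_0)$.

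Next I would substitute $\dot U_1 = U_1 E$ and $\dot V_1 = V_1 F$ into $\dot H = \dot U_1\Sigma V_1^\top + U_1\dot\Sigma V_1^\top + U_1\Sigma\dot V_1^\top$, pre- and post-multiply by $U_1^\top$ and $V_1$, and use $U_1^\top U_1 = V_1^\top V_1 = I$ and $\dot V_1^\top V_1 = F^\top = -F$ to collapse everything to the single identity
\begin{align}
U_1^\top\dot H V_1 = E\Sigma + \dot\Sigma - \Sigma F . \nonumber
\end{align}
Reading off its diagonal entries and using $E_{ii}=F_{ii}=0$ gives $\dot\sigma_i = (U_1^\top\dot H V_1)_{ii}$, which is \eqref{eq:sigmadot}. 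For a fixed pair $i\neq j$, the $(i,j)$ and $(j,i)$ entries of this identity, together with $E_{ji}=-E_{ij}$ and $F_{ji}=-F_{ij}$, yield the $2\times 2$ linear system
\begin{align}
\begin{bmatrix}\sigma_j & -\sigma_i \\ -\sigma_i & \sigma_j\end{bmatrix}\begin{bmatrix}E_{ij}\\ F_{ij}\end{bmatrix} = \begin{bmatrix}(U_1^\top\dot H V_1)_{ij}\\ (U_1^\top\dot H V_1)_{ji}\end{bmatrix} , \nonumber
\end{align}
whose coefficient matrix has determinant $\sigma_j^2 - \sigma_i^2$. When $\sigma_i^2 \neq \sigma_j^2$, solving this system gives precisely \eqref{eq:EF}, and then $\dot H_1 = \dot U_1\Sigma + U_1\dot\Sigma = U_1(E\Sigma + \dot\Sigma)$ while $\dot V_1 = V_1 F$.

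For the degenerate case $\sigma_i^2 = \sigma_j^2$ the above coefficient matrix is singular, so $E_{ij}$ and $F_{ij}$ are not pinned down by the first derivative alone. Following \cite{Wright.1992}, I would then differentiate \eqref{eq:H} repeatedly: at order $r$ one obtains an analogous linear relation between the $r$-th derivative data and the unknown entries, and the smallest $r$ with $\sigma_i^{(r)}\neq\sigma_j^{(r)}$ renders the relevant coefficient matrix nonsingular and fixes $E_{ij}$, $F_{ij}$ uniquely (the explicit formulas being those listed in \cite{Wright.1992}). If instead all derivatives of $\sigma_i$ and $\sigma_j$ coincide, for instance when $H$ is constant, the two scalar equations stay proportional at every order and a one-parameter family survives; imposing the normalization forced by skew-symmetry within the coincident block, namely $E_{ij}=-F_{ij}$ when $\sigma_i=\sigma_j$ and $E_{ij}=F_{ij}$ when $\sigma_i=-\sigma_j$, then selects the solution \eqref{eq:E1}. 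The extension to singular values that may pass through zero is deferred to Remark \ref{rem4}.

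I expect the main obstacle to be the first step: verifying that, under the constant-rank assumption, the gauge freedom in the analytic SVD can be fixed so that the factor derivatives obey the clean ODE structure \eqref{eq:Vdot} with $\dot U_2 = \dot V_2 = 0$, and -- relatedly -- choosing the correct normalization for $E_{ij}$, $F_{ij}$ when singular values coincide. Once the identity $U_1^\top\dot H V_1 = E\Sigma + \dot\Sigma - \Sigma F$ is in place, the remaining derivations are elementary linear algebra.
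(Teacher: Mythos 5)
Your derivation follows essentially the same route as the paper: differentiate $H=U_1\Sigma V_1^\top$, introduce the skew-symmetric $E=U_1^\top\dot U_1$ and $F=V_1^\top\dot V_1$, obtain the identity $U_1^\top\dot H V_1=E\Sigma+\dot\Sigma-\Sigma F$ (the paper's \eqref{eq:Hdot}), read off the diagonal to get \eqref{eq:sigmadot}, and solve the $2\times2$ off-diagonal system (the paper's \eqref{eq:constraint}) to get \eqref{eq:EF}, with repeated differentiation as in \cite{Wright.1992} when $\sigma_i^2=\sigma_j^2$. That core algebra is correct and matches the paper.

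Two points remain. First, the step you flag as the main obstacle --- justifying $\dot U_1=U_1E$, $\dot U_2=\dot V_2=0$ --- is not resolved in the paper by a gauge/constant-rank argument; rather, the paper treats $E=U_1^\top\dot U_1$ (and, after differentiating $U_1^\top U_2=0$, the relation $U_1^\top\dot U_2=0$) as underdetermined linear systems whenever $p_H<l$, and deliberately \emph{selects} the minimum-Frobenius-norm solutions $\dot U_1=(U_1^\top)^\dagger E=U_1E$ and $\dot U_2=0$ (equivalently the minimum-total-variation choice of \cite{Bunse.91}), and likewise for $\dot V_1,\dot V_2$; the theorem is thus read as prescribing one admissible choice of the (non-unique) factor derivatives, and your proposal leaves exactly this selection asserted rather than argued. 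Second, in the case where all derivatives of $\sigma_i$ and $\sigma_j$ coincide, your justification that skew-symmetry ``forces'' $E_{ij}=-F_{ij}$ is not right: skew-symmetry only ties $E_{ij}$ to $E_{ji}$ (and $F_{ij}$ to $F_{ji}$), not $E$ to $F$. The paper instead obtains \eqref{eq:E1} by minimizing $E_{ij}^2+F_{ij}^2$ subject to the constraint \eqref{eq:constraint} with $\sigma_i=\sigma_j$ or $\sigma_i=-\sigma_j$ --- again a minimum-norm selection reflecting the non-uniqueness of the SVD --- which is what yields $E_{ij}=-F_{ij}$ in the first case and $E_{ij}=F_{ij}$ in the second. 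With those two selection arguments supplied, your proof coincides with the paper's.
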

\begin{proof}
Differentiating both sides of $H=U_1 \Sigma V_1^\top$, we have
\begin{align*}
&\dot{H}= \dot{U}_1 \Sigma V_1^\top + U_1 \dot{\Sigma} V_1^\top + U_1 \Sigma \dot{V}_1^\top \\
&\Rightarrow U_1^\top \dot{H} V_1=U_1^\top \dot{U}_1 \Sigma + \dot{\Sigma} + \Sigma \dot{V}_1^\top V_1.
\end{align*}
Next, as is done in \cite{Wright.1992},  
we define the matrices $E:=U_1^\top \dot{U}_1$ and $F:=V_1^\top \dot{V}_1$ which are both skew symmetric, as can be shown by differentiating $U_1^\top U_1=I$ and $V_1^\top V_1=I$ on both sides. Hence, we can find the derivative of $\Sigma$ with
\begin{align} \label{eq:Hdot}
\dot{\Sigma}&=U_1^\top \dot{H} V_1-E \Sigma + \Sigma F.
\end{align} \noindent
To obtain $\dot{U}_1$ from $E:=U_1^\top \dot{U}_1$, we first note that the linear system is in general, underdetermined (except when $H$ has full rank). Hence, $\dot{U}_1$  is not unique. We choose the minimum Frobenius norm solution given by $\dot{U}_1=(U_1^\top)^\dagger E$, which is equivalent to 
\eqref{eq:Vdot} because $U_1$ is orthonormal. 
It remains an open question as to whether there exists a better choice of $\dot{U}_1$, but we do not expect changes in this respect.

To obtain $\dot{U}_2$, we differentiate $U_1^\top U_2=0$ (obtained from the orthogonality of columns of $U$): 
\begin{align*}
\dot{U}_1^\top U_2 + U_1^\top \dot{U}_2=E^\top U_1^\top U_2+U_1^\top \dot{U}_2=U_1^\top \dot{U}_2=0. 
\end{align*}
Similar to the case for $\dot{U}_1$, the above linear system is underdetermined and thus, $\dot{U}_2$ is not unique. Once again, we choose $\dot{U}_2$ as in \eqref{eq:Vdot} such that its Frobenius-norm is minimized\footnote{
Note that this choice of $\dot{U}=\begin{bmatrix} \dot{U}_1 & \dot{U}_2 \end{bmatrix}$ is
equivalent to the minimization of total variation (or arc length)
in \cite{Bunse.91}.}. Likewise, $\dot{V}_1$ and $\dot{V}_2$ can be similarly obtained  and are given in \eqref{eq:Vdot}.

The diagonal terms of the skew-symmetric matrices $E_{ii}$ and $F_{ii}$, for all $i=1,2,\hdots,p_H$  are zero. Hence, the diagonal entries of $E \Sigma$ and $\Sigma F$ are also zero.
Since $\Sigma=diag(\sigma_1, \sigma_2, \hdots, \sigma_{p_H})$ is diagonal, its singular values can be computed from \eqref{eq:Hdot} as given in \eqref{eq:sigmadot}.
The off-diagonal terms can be computed from the algebraic constraints of \eqref{eq:Hdot} ($i \neq j$):
\begin{align}\label{eq:constraint}
\begin{array}{l}
0=(U_1^\top \dot{H} V_1)_{ij}-E_{ij}\sigma_j+\sigma_i F_{ij}, \\
0=(U_1^\top \dot{H} V_1)_{ji}-E_{ji}\sigma_i+\sigma_j F_{ji},\end{array}
\end{align} \noindent
and if $\sigma_i^2 \neq \sigma_j^2$, using the skewness of $E$ and $F$, we get the expressions in \eqref{eq:EF}. 
If $\sigma_i^2 = \sigma_j^2$, then a similar but longer argument shows that the differentiation of \eqref{eq:Hdot} will provide equations for determining $E_{ij}$ and $F_{ij}$ if $\dot{\sigma}_i \neq \dot{\sigma}_j$. This process may be repeated if equality holds and the solution is unique if  ${\sigma}_i^{(r)} \neq {\sigma}_j^{(r)}$ for some $r$. The expressions for these cases are lengthy and the readers are referred to \cite{Wright.1992} for the extended derivation and discussion. If all derivatives are equal, e.g., when $H$ is a constant matrix, this corresponds to the non-uniqueness of the solution to the singular value decomposition \cite{Wright.1992,Bunse.91}, in which case we can choose $E_{ij}$ and $F_{ij}$ such that the Frobenius norms of $E$ and $F$, and hence of $\dot{U}_1$ and $\dot{U}_2$, are minimized. This can be solved by minimizing $E_{ij}^2+F_{ij}^2$ subject to equality constraint \eqref{eq:constraint} with either $\sigma_i=\sigma_j$ or $\sigma_i=-\sigma_j$, for which the explicit solutions are given in \eqref{eq:E1}. 
\end{proof}

A useful corollary to the above theorem is as follows:
\begin{cor} \label{cor:cor1}
$T_2 H_1 =0$, $T_2 \dot{H}_1=0$ and $T_2 \ddot{H}_1=0$, where $H_{1} :=H V_{1}=U_{1} \Sigma$.
\end{cor}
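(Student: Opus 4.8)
The plan is to exploit the two structural facts already established. First, from \eqref{eq:T_k} the second block row of the decoupling transformation is exactly $T_2 = U_2^\top$, and from the orthonormality of the columns of the unitary matrix $U=[U_1\ U_2]$ we have $U_2^\top U_1 = 0$. Since $H_1 = HV_1 = U_1\Sigma$ by definition, the first claim is immediate:
\begin{align*}
T_2 H_1 = U_2^\top U_1 \Sigma = 0 .
\end{align*}

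For the two derivative identities I would differentiate this relation rather than expand $\dot H_1$ and $\ddot H_1$ by brute force. By Theorem~\ref{thm:Hdot}, the minimum-Frobenius-norm choice made there gives $\dot U_2 = 0$, hence $\dot T_2 = \dot U_2^\top = 0$. Differentiating the identity $T_2 H_1 = 0$ and using $\dot T_2 = 0$ yields $0 = \dot T_2 H_1 + T_2 \dot H_1 = T_2 \dot H_1$; differentiating once more and again invoking $\dot T_2 = 0$ gives $0 = \dot T_2 \dot H_1 + T_2 \ddot H_1 = T_2 \ddot H_1$. The same induction in fact shows $T_2 H_1^{(r)} = 0$ for every $r \ge 0$, the cases $r = 0,1,2$ being the ones needed for the filter construction.

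An equivalent view worth recording: Theorem~\ref{thm:Hdot} states $\dot U_1 = U_1 E$, so $\ddot U_1 = U_1(\dot E + E^2)$ and, by the Leibniz rule applied to $H_1 = U_1\Sigma$, every derivative of $H_1$ is again of the form $U_1(\cdot)$. Thus the range of $H_1$ and of all its derivatives lies in $\mathrm{range}(U_1) = \ker(T_2)$, which is precisely the subspace annihilated by $T_2 = U_2^\top$.

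I do not expect a genuine obstacle here; the only point requiring care is to invoke the specific minimum-norm ASVD consequences $\dot U_2 = 0$ and $\dot U_1 = U_1 E$ from Theorem~\ref{thm:Hdot}, rather than trying to deduce $U_2^\top \dot U_1 = 0$ by differentiating $U_2^\top U_1 = 0$ on its own, which would leave the cross term $\dot U_2^\top U_1$ to be dealt with separately.
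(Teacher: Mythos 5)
Your proof is correct. Your headline argument is phrased slightly differently from the paper's: the paper simply expands $\dot H_1 = U_1 E\Sigma + U_1\dot\Sigma$ and $\ddot H_1 = U_1E^2\Sigma + U_1\dot E\Sigma + 2U_1E\dot\Sigma + U_1\ddot\Sigma$ using $\dot U_1 = U_1E$ from Theorem~\ref{thm:Hdot} and kills every term with $T_2U_1 = U_2^\top U_1 = 0$, whereas you instead invoke the companion convention $\dot U_2 = 0$ (hence $\dot T_2 = \dot U_2^\top = 0$) and differentiate the identity $T_2H_1 = 0$ twice. Both rest on the same ingredients of Theorem~\ref{thm:Hdot}, and the ``equivalent view'' you record — every derivative of $H_1 = U_1\Sigma$ stays in $\mathrm{range}(U_1)\subseteq\ker(T_2)$ — is precisely the paper's argument; your product-rule route buys a marginally cleaner induction giving $T_2H_1^{(r)}=0$ for all $r$, and it is consistent with how the paper itself uses $\dot T_2 = \dot U_2^\top = 0$ elsewhere (e.g.\ in the ALISE error analysis). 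Your closing caution is also well placed: differentiating $U_2^\top U_1 = 0$ alone would leave the cross term $\dot U_2^\top U_1$, so one must cite the specific minimum-norm ASVD choices of Theorem~\ref{thm:Hdot} rather than orthogonality by itself.
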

\begin{proof}
Using $\dot{U}_1$ from \eqref{eq:Vdot} and the definition of $T_2$ in \eqref{eq:T_k}, we find
$T_2 H_1 =  T_2 U_1 \Sigma =0$, 
$T_2 \dot{H}_1=T_2 U_1 E \Sigma + T_2 U_1 \dot{\Sigma}=0$, 
$T_2 \ddot{H}_1=T_2 U_1 E^2 \Sigma +T_2 U_1 \dot{E} \Sigma + 2 T_2 U_1 E  \dot{\Sigma}+ T_2 U_1 \ddot{\Sigma}=0$. 
\end{proof}

\begin{rem} \label{rem4}
To compute ASVD for the general case when some singular values become zero,
we first note that the factors of $H= \tilde{U} \tilde{\Sigma} \tilde{V}^\top$
do not have a nice structure as in \eqref{eq:H}, i.e.,
there is no guarantee that for
$\tilde{\Sigma}=\begin{bmatrix} diag(\tilde{\sigma}_1,
\tilde{\sigma}_2, \hdots, \tilde{\sigma}_{p})  \\ 0_{(l-p) \times p} \end{bmatrix}$,
only the first $p_{H}$ diagonal entries
$\tilde{\sigma}_1, \tilde{\sigma}_2, \hdots, \tilde{\sigma}_{p_H}$ are non-zeros and the rest $\tilde{\sigma}_{p_H+1}, \hdots, \tilde{\sigma}_{p}$ are zeros. Furthermore, $\tilde{\sigma}_i=0$, for any $i=1,\hdots,p$ does not imply that $\dot{\tilde{\sigma}}=0$, as the rank of $H$ given by $p_H$ may increase. Without going into the details as this is more of an implementation issue, we would like to note that slight modifications to Theorem \ref{thm:Hdot} can be carried out to account for the case when any singular value becomes zero. This relies on careful accounting of the cases when $\tilde{\sigma}_i$ is zero or not, and partitions $\tilde{U}$ and $\tilde{V}$ into $\tilde{U}_1$ and $\tilde{U}_2$, as well as $\tilde{V}_1$ and $\tilde{V}_2$, respectively, where $\tilde{U}_1$ and $\tilde{V}_1$ are concatenations of all columns of $\tilde{U}$ and $\tilde{V}$, for which $\tilde{\sigma}_i \neq 0$, for all $i=1, \hdots, p$, whereas $\tilde{U}_2$ and $\tilde{V}_2$ are concatenations of the rest of the columns of $\tilde{U}$ and $\tilde{V}$.
The other necessary modification is the replacement of \eqref{eq:Vdot} with $\dot{\tilde{\sigma}}_i= (\tilde{U}^\top \dot{H} \tilde{V})_{ii}$ for all $i=1,\hdots,p$.
\end{rem}

\section{Algorithms for Minimum-variance Unbiased Estimation of State and Input} \label{sec:MVUE}

Since we have shown in Proposition \ref{prop1} that the unknown input is in general not directly observable from the output signal unless an `output derivative' signal is available, we now analyze two sets of assumptions under which the input can be estimated that are inspired by deterministic observer designs (e.g., \cite{Corless.98,Hou.1998b}), and propose two corresponding optimal estimator designs:
\begin{enumerate}[A.]
\item \emph{Exact Linear Input \& State Estimator} (ELISE), in which we assume that an additional `output derivative' measurement is available (inspired by the output differentiation approach in \cite{Hou.1998b}); 
\item \emph{Approximate Linear Input \& State Estimator} (ALISE), in which output derivative is not measured but the input signals are sufficiently smooth and have bounded derivatives (inspired by the derivative free approach in \cite{Corless.98}, which only achieves arbitrarily small error). 
\end{enumerate}

\subsection{Exact Linear Input \& State Estimator (ELISE)}

For the first variant, we consider the following assumption:
\begin{assump}{$(\mathbf{A1})$}\label {A1}  
We assume that 
\begin{enumerate}[(i)]
\item the noise terms, $w(t) \in \mathbb{R}^q$ and $v(t) \in \mathbb{R}^l$, are mutually uncorrelated, zero-mean, white random signals with known noise statistics: $\mathbb{E} [w(t) w(t')^\top] =Q(t) \delta(t-t')$,  $\mathbb{E} [v(t) v(t')^\top] =R(t) \delta(t-t')$ and $\mathbb{E} [w(t) v(t')^\top] =0$, where $\delta(\cdot)$ is the Dirac delta function, $Q(t) \succeq 0$ and $R(t) \succ 0$ for all $t$. 
\item an additional measurement is available, which contains information about the state derivative $\dot{x}(t)$ and thus, about an equivalent of the `output derivative': 
\begin{align} \label{eq:yoverl}
\begin{array}{ll}
\overline{y}(t) =& \overline{C}(t) \dot{x} (t) +\doverline{C}(t) x(t) + \overline{D}(t) \dot{u}(t)  + \doverline{D}(t) u(t) \\ &+ \overline{H}(t) \dot{d}(t)+\doverline{H}(t) d(t) + \overline{v} (t) \vspace{-0.45cm}
\end{array}
\end{align} \noindent
with the following noise statistics: $\mathbb{E}[\overline{v}(t)]=0$,  $\mathbb{E} [w(t) \overline{v}(t')^\top] =0$, $\mathbb{E} [v(t) \overline{v}(t')^\top] =\grave{R} (t) \delta(t-t')$ and $\mathbb{E} [\overline{v}(t) \overline{v}(t')^\top] =\overline{R}(t) \delta(t-t')$, and where $\overline{R}(t) \succ 0$ and $\grave{R}(t)$ are known. Note that $d(t)$ need not be differentiable because we only make use of $\overline{z}_2:=\overline{T}_2 \overline{y}$ in our filter design and $\overline{T}_2 \overline{H}=0$ ($\overline{T}_2$ is as defined for \eqref{eq:variant1} below).
\end{enumerate}
\end{assump}

Note that the additional measurement $\overline{y}$ is different from the signal $\dot{y}(t)$, which is not well defined due to the derivative of noise. The assumption of an additional measurement is at times reasonable, for e.g., accelerations of mechanical systems are typically measured in addition to state (position and velocity), and sometimes slew rates (rate of change of voltage) in electronics and flow accelerations in fluid systems may also be measured.

However, such availability of an additional measurement can be rare. This is actually the main motivation for considering the second derivative-free variant in the next section. Alternatively, \emph{filtered} derivatives of the output may be used in place of the additional measurement, as is demonstrated to be good enough in the simulation example in Section \ref{sec:reentry}.

With Assumption ($A1$), we consider the following filter:
\begin{align} 
\hspace*{-0.15cm} \hat{d}_1 &=M_1(z_1-C_1 \hat{x}-D_1 u), \label{eq:dhat1}\\
\hspace*{-0.15cm} \nonumber\hat{d}_{2}
 &= M_2( \overline{z}_2-(\overline{C}_2 A+\overline{T}_2 \doverline{C}) \hat{x}-\overline{C}_2 B u-\overline{C}_2 G_1 \hat{d}_1\\
\hspace*{-0.15cm} &\qquad -\overline{D}_2 \dot{u}-T_2 \doverline{D} u), \label{eq:variant1} \\
\hspace*{-0.15cm} \hat{d}  &=V_1 \hat{d}_1+V_2 \hat{d}_2, \label{eq:dhat} \\ 
\hspace*{-0.15cm} \dot{\hat{x}}  &= A \hat{x} + B u + G_1 \hat{d}_{1} +G_2 \hat{d}_2  + L (z_2-C_2 \hat{x}-D_2 u),\hspace{-0.1cm} \label{eq:xhat}
\end{align}
where $\overline{T}_2=\overline{U}_2^\top$ is obtained from the singular value decomposition 
of $\overline{H}=\begin{bmatrix}\overline{U}_{1}& \overline{U}_{2} \end{bmatrix} \begin{bmatrix} \overline{\Sigma} & 0 \\ 0 & 0 \end{bmatrix} \begin{bmatrix} \overline{V}_{1}^{\, \top} \\ \overline{V}_{2}^{\, \top} \end{bmatrix}$, while $\overline{C}_2:=\overline{T}_2 \overline{C}$, $\overline{D}_2:=\overline{T}_2 \overline{D}$ and $\overline{z}_2:=\overline{T}_2 \overline{y}$.  
We also assume that $\overline{T}_2 \doverline{H}=0$, as is the case when $\doverline{H}=0$ or when $\overline{y}$ is the true `output derivative' with $\overline{H}=H$ and $\doverline{H}=\dot{H}$ (by Theorem \ref{thm:Hdot} and Corollary \ref{cor:cor1}). The matrices ${L} \in \mathbb{R}^{n \times (l-p_{H})}$, $M_{1} \in \mathbb{R}^{p_{H} \times p_{H}}$ and $M_{2} \in \mathbb{R}^{(p-p_{H}) \times (l-p_{H})}$ are filter gains that are chosen to minimize the state and input error covariances. 

\begin{algorithm}[!t] \small
\caption{ELISE algorithm}\label{algorithm}
\begin{algorithmic}[1]
\State Initialize: $\hat{x}(t_{0})=\hat{x}_0$; $P^x(t_{0})=\mathcal{P}^x_0$;
\While {$t < t_f$}
\LineComment{Unknown input estimation}
\State $M_1=\Sigma^{-1}$;
\State $\hat{Q}=W Q W^\top+G_1 M_1 R_1 M_1^\top G_1^\top$;
\State $\hat{A}=A-G_1 M_1 C_1$;
\State $\tilde{R}_2=(\overline{C}_2 \hat{A}+\overline{T}_2 \doverline{C}) P^x (\overline{C}_2 \hat{A}+\overline{T}_2 \doverline{C})^\top + \overline{C}_2 \hat{Q} \overline{C}_2^\top+\overline{R}_2$ 
\Statex \hspace{1.5cm} $-\grave{R}_{12}^\top M_1^\top G_1^\top \overline{C}_2^\top-\overline{C}_2 G_1 M_1 \grave{R}_{12}$;
\State $M_2=(G_2^\top \overline{C}_2^\top \tilde{R}_2^{-1} \overline{C}_2 G_2)^{-1} G_2^\top \overline{C}_2^\top \tilde{R}_2^{-1}$;
\State $\hat{d}_1=M_1(z_1-C_1 \hat{x}-D_1 u)$;
\State $\hat{d}_2=M_2( \overline{z}_2-(\overline{C}_2 A+\overline{T}_2 \doverline{C}) \hat{x}-\overline{C}_2 B u-\overline{C}_2 G_1 \hat{d}_1-\overline{D}_2 \dot{u}$ 
\Statex \hspace{1.5cm} $-\overline{T}_2 \doverline{D} u)$;
\State $\hat{d}=V_1 \hat{d}_1+V_2 \hat{d}_2$;
\State $P^d_1=M_1 (C_1 P^x C_1^\top+R_1) M_1^\top$;
\State$P^d_2=(G_2^\top \overline{C}_2^\top \tilde{R}_2^{-1} \overline{C}_2 G_2)^{-1}$;
\State $P^d_{12}=M_1 C_1 P^x (\hat{A}^\top \overline{C}_2^\top+\doverline{C}^\top \overline{T}_2^\top) M_2^\top$ 
\Statex \hspace{1.5cm} $-M_1 R_1 M_1^\top G_1^\top \overline{C}_2^\top M_2^\top+M_1 \grave{R}_{12} M_2^\top$;
\State $P^d=V_1 P^d_1 V_1^\top+ V_1 P^d_{12} V_2^\top + V_2 P^{d \top}_{12} V_1^\top + V_2 P^d_2 V_2^\top$;

\LineComment{State estimation}
\State $\overline{A}=(I-G_2M_2 \overline{C}_2) \hat{A}-G_2 M_2 \overline{T}_2 \doverline{C}$;
\State $\overline{Q}=(I-G_2M_2 \overline{C}_2) \hat{Q} (I-G_2M_2 \overline{C}_2)^\top+ G_2 M_2 \overline{R}_2 M_2^\top G_2^\top$;
\State $L=(P^x C_2^\top -G_2 M_2 \grave{R}_2^\top) R_2^{-1}$;
\State $\dot{\hat{x}}=A \hat{x} + B u + G_1 \hat{d}_{1} +G_2 \hat{d}_2 + L (z_2-C_2 \hat{x} -D_2 u)$;
\State $\dot{P}^x=\overline{A} P^x + P^x \overline{A}^\top +\overline{Q} - L R_2 L^\top$;
\EndWhile
\end{algorithmic}
\end{algorithm}

A summary of the first variant of optimal continuous-time filter is given in Algorithm \ref{algorithm}. The ELISE algorithm has some nice properties, which we will describe here and prove in the Appendix. First, assuming that the filter is uniformly asymptotically stable\footnote{See \cite{Kalman.Bertram.1960} for the definition of uniform asymptotic stability.}, the initial state and unknown input estimate biases are shown to converge exponentially in the following lemmas. Note that the uniform asymptotic stability of the proposed filter will be verified in Theorem \ref{thm:stable}.

\begin{lem}[Convergence of state estimate bias of ELISE] \label{lem:convergence}
Let ELISE be uniformly asymptotically stable. 
Then, its state estimate bias, $\mathbb{E}[\tilde{x}]:=\mathbb{E}[x-\hat{x}]$, decays exponentially, i.e., 
\begin{align}
\| \mathbb{E}[\tilde{x}] \| \leq \beta e^{-\gamma (t-t_0)}, \label{eq:stateError}
\end{align} \noindent
for all $t\geq t_0$, for some constant $\beta$ and $\gamma$. If, in addition, 
$\breve{A}:=\overline{A}-L C_2$ is bounded\footnote{This holds in general, since the system matrices are bounded by assumption and the practical usefulness of a stable filter with unbounded $P^x$ is rather limited.}, 
with an initial state estimate bias given by $\mathbb{E}[\tilde{x}(t_0)]$, then $\beta$ and $\gamma$\footnote{This convergence rate (with $\Gamma=I$) can be shown to be the largest when compared with all bounded $\Gamma$ such that the pair $(\breve{A},\Gamma)$ is uniformly completely observable (see Definition \ref{def:uco}) using an approach similar to \cite[pp. 91-93]{slotine.li.book91}. Note also that in general, the checking of uniform complete controllability or observability is not straightforward. Some classes of systems for which uniform complete controllability can be shown are given in \cite[Section 5]{Silverman.1968}, where $\Gamma=I$ is one such instance.} are given by
\begin{align}
\beta &= \sqrt{\frac{\mathbb{E}[\tilde{x}(t_0)]^\top S(t_0) \mathbb{E}[\tilde{x}(t_0)]}{\underline{\lambda}_{min}(S)}}, \ 
\gamma= \frac{1}{2 \overline{\lambda}_{max}(S)}, \label{eq:betagamma}
\end{align} \noindent
where  
$\overline{\lambda}_{max} (S)$ and $\underline{\lambda}_{min} (S)$ are the supremum and infimum over $t  \geq t_0$ of the largest and smallest eigenvalue of 
$S(t)= \displaystyle \lim_{T \to \infty} \int^{T}_{t} \Phi_{\breve{A}(t)}(t,s) \Phi_{\breve{A}(t)}^\top(t,s) ds \succ 0$ with $ \Phi_{\breve{A}(t)}(\cdot)$ denoting the transition matrix associated with state dynamics $\dot{x}=\breve{A}x$. 
\end{lem}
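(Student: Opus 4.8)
\emph{Proof proposal.} The plan is to show that the mean state error $\mathbb{E}[\tilde{x}]$ obeys the homogeneous linear time-varying ODE $\tfrac{d}{dt}\mathbb{E}[\tilde{x}] = \breve{A}(t)\,\mathbb{E}[\tilde{x}]$, and then read off exponential decay from the converse-Lyapunov theory for uniformly (exponentially) stable LTV systems. \emph{Step 1 (error dynamics).} Set $\tilde{x}:=x-\hat{x}$, $\tilde{d}_1:=d_1-\hat{d}_1$, $\tilde{d}_2:=d_2-\hat{d}_2$. Substituting $z_1 = C_1 x + D_1 u + \Sigma d_1 + v_1$ from \eqref{eq:sys} into \eqref{eq:dhat1} and using $M_1=\Sigma^{-1}$ gives $\tilde{d}_1 = -M_1 C_1 \tilde{x} - M_1 v_1$. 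Next, premultiply the additional measurement \eqref{eq:yoverl} by $\overline{T}_2$ — here $\overline{T}_2\overline{H}=0$ holds by the SVD construction of $\overline{T}_2$ (the analogue of Corollary \ref{cor:cor1}) and $\overline{T}_2\doverline{H}=0$ by assumption — then substitute $\dot{x}$ from \eqref{eq:sys}; plugging the result into the definition of $\hat{d}_2$ in \eqref{eq:variant1}, using $M_2\overline{C}_2 G_2 = I$ (immediate from the formula for $M_2$ in Algorithm \ref{algorithm}), $\hat{A}=A-G_1 M_1 C_1$, and eliminating $\tilde{d}_1$, one obtains $\tilde{d}_2 = -M_2(\overline{C}_2\hat{A}+\overline{T}_2\doverline{C})\tilde{x} + (\text{a zero-mean term linear in } v,w,\overline{v})$. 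Finally, form $\dot{\tilde{x}} = \dot{x}-\dot{\hat{x}}$ from \eqref{eq:sys} and \eqref{eq:xhat}, use $z_2 - C_2\hat{x} - D_2 u = C_2\tilde{x}+v_2$, substitute the expressions for $\tilde{d}_1,\tilde{d}_2$, and collect the deterministic part exactly as dictated by $\overline{A}=(I-G_2 M_2\overline{C}_2)\hat{A} - G_2 M_2\overline{T}_2\doverline{C}$ and $\breve{A}=\overline{A}-LC_2$; the known-input terms cancel, leaving $\dot{\tilde{x}} = \breve{A}\,\tilde{x} + (\text{zero-mean noise})$. Since $w,v_1,v_2,\overline{v}$ are zero-mean by Assumption ($A1$) (and the transformation preserves this), taking expectations yields $\tfrac{d}{dt}\mathbb{E}[\tilde{x}] = \breve{A}(t)\,\mathbb{E}[\tilde{x}]$, hence $\mathbb{E}[\tilde{x}(t)] = \Phi_{\breve{A}(t)}(t,t_0)\,\mathbb{E}[\tilde{x}(t_0)]$.

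\emph{Step 2 (exponential bound).} Since ELISE is uniformly asymptotically stable, so is the homogeneous system $\dot{z}=\breve{A}(t)z$; for linear systems uniform asymptotic stability is equivalent to uniform exponential stability \cite{Kalman.Bertram.1960}, so there exist $\beta,\gamma>0$ with $\norm{\Phi_{\breve{A}(t)}(t,t_0)} \le \beta e^{-\gamma(t-t_0)}$ for all $t\ge t_0$. Then $\norm{\mathbb{E}[\tilde{x}(t)]} \le \beta e^{-\gamma(t-t_0)}\norm{\mathbb{E}[\tilde{x}(t_0)]}$, and absorbing $\norm{\mathbb{E}[\tilde{x}(t_0)]}$ into $\beta$ gives \eqref{eq:stateError}.

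\emph{Step 3 (explicit constants).} When $\breve{A}$ is in addition bounded, the matrix $S(t)$ in the statement is well-defined, uniformly bounded above, and uniformly positive definite (the uniform lower bound uses boundedness of $\breve{A}$), and it solves a Lyapunov differential equation of the form $\dot{S} + \breve{A}^\top S + S\breve{A} = -I$. Taking $V(t):=\mathbb{E}[\tilde{x}(t)]^\top S(t)\,\mathbb{E}[\tilde{x}(t)]$ and using Step 1, $\dot{V} = \mathbb{E}[\tilde{x}]^\top(\dot{S}+\breve{A}^\top S+S\breve{A})\mathbb{E}[\tilde{x}] = -\norm{\mathbb{E}[\tilde{x}]}^2 \le -V/\overline{\lambda}_{max}(S)$, hence $V(t)\le V(t_0)e^{-(t-t_0)/\overline{\lambda}_{max}(S)}$; combining with $\underline{\lambda}_{min}(S)\norm{\mathbb{E}[\tilde{x}]}^2 \le V$ gives precisely $\beta$ and $\gamma$ as in \eqref{eq:betagamma}. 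The footnoted optimality of this rate follows by the same Lyapunov argument with $\Gamma^\top\Gamma$ in place of $I$, as in \cite[pp.~91--93]{slotine.li.book91}.

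\emph{Main obstacle.} The delicate part is Step 1: one must carry the substitutions through \eqref{eq:dhat1}--\eqref{eq:xhat} and verify that the gain choices ($M_1=\Sigma^{-1}$, $M_2\overline{C}_2 G_2=I$, $\overline{T}_2\overline{H}=\overline{T}_2\doverline{H}=0$) make all non-noise terms collapse exactly into $\breve{A}\tilde{x}$ — i.e., that $\hat{A}$, $\overline{A}$, $L$ are designed precisely so the mean error is autonomous. Continuous-time white noise also calls for the usual formal handling when taking expectations, as in the Kalman--Bucy derivation. Steps 2--3 are textbook LTV stability theory.
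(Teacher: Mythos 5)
Your proposal is correct and follows essentially the same route as the paper: deriving the homogeneous mean-error dynamics $\tfrac{d}{dt}\mathbb{E}[\tilde{x}]=\breve{A}\,\mathbb{E}[\tilde{x}]$ from \eqref{eq:dtilde}--\eqref{eq:xtilde}, invoking the equivalence of uniform asymptotic and exponential stability for linear systems, and then extracting the explicit constants from the Lyapunov function $V=\mathbb{E}[\tilde{x}]^\top S\,\mathbb{E}[\tilde{x}]$ with $\dot{S}+\breve{A}^\top S+S\breve{A}=-I$. The only cosmetic difference is that the paper justifies the uniform eigenvalue bounds on $S$ by first noting that bounded $\breve{A}$ makes $(\breve{A},I)$ uniformly completely observable and then citing Anderson's results, whereas you assert the bounds directly; the substance is the same.
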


\begin{lem}[Convergence of unknown input estimate bias of ELISE] \label{lem:convergenceInput}
Let ELISE be uniformly asymptotically stable. 
Then, the input estimate bias of ELISE decays exponentially, i.e., 
there exist $\alpha_1$ and $\gamma$ such that 
\begin{align}
 \|\mathbb{E}[d-\hat{d}]\| := \|\mathbb{E}[\tilde{d}]\|
\leq \alpha_1 e^{-\gamma (t-t_0)}, \label{eq:derror}
\end{align} \noindent
where $\gamma$ is given by \eqref{eq:betagamma} (assuming $\breve{A}$ is bounded as in Lemma \ref{lem:convergence}) and $\alpha_1$ is a positive constant
\begin{align}
\alpha_1&= \beta \sup(\|V_1 M_1 C_1 \|+ \| V_2 M_2 (\overline{C}_2 \hat{A} +\overline{T}_2 \doverline{C})\|). \label{eq:alpha1}
\end{align}
\end{lem}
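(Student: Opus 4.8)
The plan is to reduce everything to the state estimate error $\tilde{x}$ plus zero-mean noise, and then inherit the exponential decay from Lemma \ref{lem:convergence}. First I would substitute the decoupled measurement equations of \eqref{eq:sys} into the input filter equations. From $z_1=C_1 x+D_1 u+\Sigma d_1+v_1$ and the gain choice $M_1=\Sigma^{-1}$ (so $M_1\Sigma=I$), \eqref{eq:dhat1} gives $\tilde{d}_1:=d_1-\hat{d}_1=-M_1 C_1\tilde{x}-M_1 v_1$. For the second component I would expand $\overline{z}_2=\overline{T}_2\overline{y}$ using \eqref{eq:yoverl}, invoking $\overline{T}_2\overline{H}=0$ and $\overline{T}_2\doverline{H}=0$ so that all terms carrying $\dot d$ and $d$ through $\overline{H},\doverline{H}$ vanish (this is exactly why $d$ need not be differentiable), which yields $\overline{z}_2=\overline{C}_2\dot{x}+\overline{T}_2\doverline{C}\,x+\overline{D}_2\dot{u}+\overline{T}_2\doverline{D}\,u+\overline{v}_2$ with $\overline{v}_2:=\overline{T}_2\overline{v}$. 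Then I would replace $\dot{x}=Ax+Bu+G_1 d_1+G_2 d_2+Ww$ from \eqref{eq:sys}.

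Substituting into \eqref{eq:variant1}, the known-input terms $\overline{C}_2 Bu$, $\overline{D}_2\dot{u}$ and $\overline{T}_2\doverline{D}\,u$ cancel exactly against their counterparts in the filter, and using $M_2\overline{C}_2 G_2=I$ (by the construction of $M_2$ in Algorithm \ref{algorithm}) one obtains $\tilde{d}_2:=d_2-\hat{d}_2=-M_2(\overline{C}_2 A+\overline{T}_2\doverline{C})\tilde{x}-M_2\overline{C}_2 G_1\tilde{d}_1-M_2\overline{C}_2 Ww-M_2\overline{v}_2$. Taking expectations and using that $w,v_1,\overline{v}_2$ are zero-mean, $\mathbb{E}[\tilde{d}_1]=-M_1 C_1\mathbb{E}[\tilde{x}]$; substituting this into $\mathbb{E}[\tilde{d}_2]$ and recalling $\hat{A}=A-G_1 M_1 C_1$ from the algorithm gives $\mathbb{E}[\tilde{d}_2]=-M_2(\overline{C}_2\hat{A}+\overline{T}_2\doverline{C})\mathbb{E}[\tilde{x}]$. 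Since $V$ is unitary, $d=V_1 d_1+V_2 d_2$, so by \eqref{eq:dhat}, $\tilde{d}=V_1\tilde{d}_1+V_2\tilde{d}_2$ and hence $\mathbb{E}[\tilde{d}]=-\bigl(V_1 M_1 C_1+V_2 M_2(\overline{C}_2\hat{A}+\overline{T}_2\doverline{C})\bigr)\mathbb{E}[\tilde{x}]$.

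To conclude, I would apply submultiplicativity of the norm to get $\|\mathbb{E}[\tilde{d}]\|\le\bigl(\|V_1 M_1 C_1\|+\|V_2 M_2(\overline{C}_2\hat{A}+\overline{T}_2\doverline{C})\|\bigr)\|\mathbb{E}[\tilde{x}]\|$, then invoke Lemma \ref{lem:convergence}, which (under the standing boundedness of $\breve{A}$) gives $\|\mathbb{E}[\tilde{x}]\|\le\beta e^{-\gamma(t-t_0)}$ with the $\beta,\gamma$ of \eqref{eq:betagamma}. Taking the supremum over $t\ge t_0$ of the matrix-norm factor—finite because the system matrices are bounded by assumption and $P^x$, and therefore $M_2$, is bounded whenever $\breve{A}$ is—produces the constant $\alpha_1$ of \eqref{eq:alpha1} and hence \eqref{eq:derror}. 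The only delicate step is the bookkeeping in the $\overline{z}_2$ substitution: one must carefully verify that every term in $u$, $\dot u$ cancels against \eqref{eq:variant1} and that all contributions through $\overline{H}$ and $\doverline{H}$ drop out (via Assumption $(A1)$(ii) and the $\overline{T}_2\overline{H}_1$-type identities of Corollary \ref{cor:cor1}), so that no derivative of $d$ survives; the rest is routine linear algebra plus the zero-mean property of the noise, with the exponential rate borrowed wholesale from Lemma \ref{lem:convergence}.
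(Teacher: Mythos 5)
Your proposal is correct and follows essentially the same route as the paper: you rederive the error equations $\tilde{d}_1=-M_1(C_1\tilde{x}+v_1)$ and $\tilde{d}_2=-M_2(\overline{C}_2\hat{A}+\overline{T}_2\doverline{C})\tilde{x}+M_2\overline{C}_2G_1M_1v_1-M_2\overline{C}_2Ww-M_2\overline{v}_2$ (the paper's \eqref{eq:dtilde}, obtained there from the gain conditions $M_1\Sigma=I$, $M_2\overline{C}_2G_2=I$), take expectations so the zero-mean noise drops out, and then combine $\tilde{d}=V_1\tilde{d}_1+V_2\tilde{d}_2$ with the triangle inequality, submultiplicativity, and the bound $\|\mathbb{E}[\tilde{x}]\|\le\beta e^{-\gamma(t-t_0)}$ from Lemma \ref{lem:convergence}, exactly as the paper does. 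Your explicit bookkeeping of the $\overline{z}_2$ substitution (cancellation of the $u$, $\dot u$ terms and of everything through $\overline{H},\doverline{H}$ via Assumption $(A1)$(ii)) is simply a more detailed spelling-out of the step the paper performs when deriving \eqref{eq:dtilde}.
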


In addition, the following theorem proves that state and input estimates of ELISE are unbiased and optimal. 
\begin{thm}[Minimum-variance unbiased state and input estimation of ELISE] \label{thm:main}
Suppose ($A1$) holds. If ${\rm rk}(\overline{C}_{2} G_{2})=p-p_{H}$ and $(\overline{A},C_2)$ is detectable, where the matrix $\overline{A}$ is as defined in Algorithm \ref{algorithm}, then the filter gains, $L$, $M_1$ and $M_2$, given in Algorithm \ref{algorithm}, and the differential Riccati equation given by
\begin{align} \label{eq:riccatiEq}
\dot{P}^x=\overline{A} P^x + P^x \overline{A}^\top +\overline{Q} - L R_2 L^\top
\end{align} 
\noindent provide the unbiased, best linear estimate (BLUE) of the unknown input and the minimum-variance unbiased estimate of system states. Moreover, if the optimal filter is uniformly asymptotically stable, the effect of initial state and input estimate bias decays exponentially, as given in \eqref{eq:stateError} and \eqref{eq:derror}.
\end{thm}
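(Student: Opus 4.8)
The plan is to propagate the estimation errors $\tilde{x}:=x-\hat{x}$, $\tilde{d}_1:=d_1-\hat{d}_1$, $\tilde{d}_2:=d_2-\hat{d}_2$ through the filter equations \eqref{eq:dhat1}--\eqref{eq:xhat} together with \eqref{eq:sys} and \eqref{eq:yoverl}, showing at each stage that the gain in Algorithm~\ref{algorithm} is either forced by unbiasedness or minimizes a conditional error covariance. Substituting $z_1=C_1x+D_1u+\Sigma d_1+v_1$ into \eqref{eq:dhat1} gives $\tilde{d}_1=(I-M_1\Sigma)d_1-M_1C_1\tilde{x}-M_1v_1$; since $d_1$ is arbitrary and $\Sigma$ is square and invertible, unbiasedness forces $M_1=\Sigma^{-1}$ and leaves $\tilde{d}_1=-M_1C_1\tilde{x}-M_1v_1$. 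Next, applying $\overline{T}_2$ to \eqref{eq:yoverl} and using $\overline{T}_2\overline{H}=0$ (construction of $\overline{T}_2$) together with the assumed $\overline{T}_2\doverline{H}=0$ eliminates the $\dot{d},d$ terms, so $\overline{z}_2=\overline{C}_2\dot{x}+\overline{T}_2\doverline{C}x+\overline{D}_2\dot{u}+\overline{T}_2\doverline{D}u+\overline{v}_2$ with $\overline{v}_2:=\overline{T}_2\overline{v}$; inserting $\dot{x}=Ax+Bu+G_1d_1+G_2d_2+Ww$, subtracting the matching terms in \eqref{eq:variant1}, and using $\tilde{d}_1$ above yields
\begin{align*}
\tilde{d}_2=(I-M_2\overline{C}_2G_2)d_2-M_2\big[(\overline{C}_2\hat{A}+\overline{T}_2\doverline{C})\tilde{x}-\overline{C}_2G_1M_1v_1+\overline{C}_2Ww+\overline{v}_2\big].
\end{align*}
Because ${\rm rk}(\overline{C}_2G_2)=p-p_H$, the matrix $\overline{C}_2G_2$ has full column rank, so any left inverse ($M_2\overline{C}_2G_2=I$) removes the $d_2$-term; the residual has covariance $\tilde{R}_2$ as in Algorithm~\ref{algorithm} (its $\grave{R}_{12}$ cross-terms coming from $\mathbb{E}[v_1\overline{v}_2^\top]$), and minimizing $M_2\tilde{R}_2M_2^\top$ over all such $M_2$ is a Gauss--Markov/constrained weighted-least-squares problem whose unique minimizer is the stated $M_2$, with ${\rm cov}(\tilde{d}_2)=P^d_2$.

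Substituting $\tilde{d}_1,\tilde{d}_2$ and $z_2=C_2x+D_2u+v_2$ into \eqref{eq:xhat}, the coefficient of $\tilde{x}$ collapses exactly to $\breve{A}:=\overline{A}-LC_2$ with $\overline{A}$ as in Algorithm~\ref{algorithm}, and we obtain the linear error dynamics $\dot{\tilde{x}}=\breve{A}\tilde{x}+\nu$, where $\nu=(I-G_2M_2\overline{C}_2)(Ww-G_1M_1v_1)-G_2M_2\overline{v}_2-Lv_2$ gathers all white-noise terms. Taking expectations and using that every noise is zero-mean gives $\tfrac{d}{dt}\mathbb{E}[\tilde{x}]=\breve{A}\,\mathbb{E}[\tilde{x}]$, so $\mathbb{E}[\tilde{x}(t_0)]=0$ forces $\mathbb{E}[\tilde{x}]\equiv0$; then $\mathbb{E}[\tilde{d}_1]=\mathbb{E}[\tilde{d}_2]=0$ and hence $\mathbb{E}[\tilde{d}]=V_1\mathbb{E}[\tilde{d}_1]+V_2\mathbb{E}[\tilde{d}_2]=0$, establishing unbiasedness of both the state and input estimates.

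For optimality, the error covariance $P^x=\mathbb{E}[\tilde{x}\tilde{x}^\top]$ obeys a matrix Riccati ODE $\dot{P}^x=\breve{A}P^x+P^x\breve{A}^\top+\Xi$ whose driving term $\Xi$ bundles the noise spectral densities and the cross-correlations $\grave{R}_2=\mathbb{E}[v_2\overline{v}_2^\top]$ (and $\grave{R}_{12}$); completing the square in $L$ exactly as for the Kalman--Bucy filter with correlated process and measurement noise gives the minimizing gain $L=(P^xC_2^\top-G_2M_2\grave{R}_2^\top)R_2^{-1}$ and reduces the Riccati equation to \eqref{eq:riccatiEq}. Since this right-hand side is quadratic in the instantaneous value of $L$ with positive-definite leading term, it is minimized pointwise, and a Riccati comparison argument then shows $P^x(t)$ is minimized in the positive-semidefinite order at every $t$ among filters of this structure, detectability of $(\overline{A},C_2)$ keeping $P^x$ bounded; reducing the problem to the ``virtual'' unknown-input-free system $\dot{\tilde{x}}=\overline{A}\tilde{x}+(\text{process noise})$ with output $z_2$ shows this structure is without loss of generality, so $\hat{x}$ is the MVU state estimate, and since \eqref{eq:dhat1}--\eqref{eq:dhat} are the forced/Gauss--Markov-optimal memoryless linear maps given $\hat{x}$, $\hat{d}$ is the BLUE with covariance $P^d=V_1P^d_1V_1^\top+V_1P^d_{12}V_2^\top+V_2P^{d\top}_{12}V_1^\top+V_2P^d_2V_2^\top$. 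Finally, repeating the expectation computation with a nonzero $\mathbb{E}[\tilde{x}(t_0)]$ gives $\mathbb{E}[\tilde{x}(t)]=\Phi_{\breve{A}(t)}(t,t_0)\mathbb{E}[\tilde{x}(t_0)]$, which decays exponentially as in \eqref{eq:stateError} once $\breve{A}$ is bounded and the filter is uniformly asymptotically stable, and Lemma~\ref{lem:convergenceInput} propagates this to \eqref{eq:derror}; these are precisely Lemmas~\ref{lem:convergence} and~\ref{lem:convergenceInput}.

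The main obstacle is the optimality step: getting every noise cross-correlation right when forming both $\tilde{R}_2$ and $\Xi$, and --- more importantly --- justifying that the sequential optimization (with $M_1$ forced, $M_2$ optimal given the current $P^x$, and $L$ optimal given $M_1,M_2$) yields the global minimum-variance estimator rather than merely a stationary point. This is where the reduction to an equivalent virtual system with no unknown inputs, to which the classical Kalman--Bucy optimality argument applies verbatim, does the essential work.
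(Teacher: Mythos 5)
Your proposal is correct and follows essentially the same route as the paper's primary proof: derive the error dynamics, fix $M_1=\Sigma^{-1}$ and impose $M_2\overline{C}_2G_2=I$ for unbiasedness, obtain $M_2$ via the Gauss--Markov (weighted least-squares) argument on $\tilde{R}_2$, and then treat $\dot{\tilde{x}}=(\overline{A}-LC_2)\tilde{x}+\overline{w}-Lv_2$ as the error dynamics of a Kalman--Bucy filter with correlated noise for the ``virtual'' system without unknown inputs, which yields $L$ and the Riccati equation \eqref{eq:riccatiEq} and settles global optimality. The paper additionally offers an alternative derivation by limiting-case approximation of the discrete-time filter, but that is not needed for the claim.
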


However, the optimality of the filter does not guarantee that the filter is stable. Additional assumptions are needed for the uniform asymptotic stability of the filter, similar to the stability requirements of the Kalman-Bucy filter \cite[Theorem 4]{kalman.bucy.1961}.

\begin{thm}[Stability of ELISE] \label{thm:stable}
Using Assumption ($A1$) and the proposed filter, we obtain a `virtual' equivalent system 
\begin{align} \label{eq:eqSys}
\begin{array}{ll}
\dot{x}_e&={A}_e x_e +u_e + {w}_e,\
y_e= C_2 x_e + v_2,
\end{array}
\end{align} \noindent
with $A_e:=\overline{A}-G_2 M_2 \grave{R}_2^\top R_2^{-1}$, $\overline{A}:=(I-G_2 M_2 \overline{C}_2) \hat{A}-G_2 M_2 \overline{T}_2 \doverline{C}$, $u_e\hspace{-0.05cm}=\hspace{-0.05cm}-G_2 M_2 \grave{R}_2^\top R_2^{-1} y_e$, $w_e\hspace{-0.05cm}=\hspace{-0.05cm}G_2M_2 \grave{R}_2^\top R_2^{-1} v_2+ \overline{w}$ and $\overline{w}:=(I-G_2 M_2 \overline{C}_2)Ww-(I-G_2 M_2 \overline{C}_2) G_1 M_1 v_1-G_2 M_2 \overline{v}_2$. If the equivalent system \eqref{eq:eqSys} is
\begin{description}
\item
{(A2)} uniformly completely observable,
\item
 {(A3)} uniformly completely controllable,
\item
{(A4)} $\| {Q}_e \|$ and $\| R_2 \|$ are bounded below and above,
\item
{(A5)} $\| {A}_e \|$ is bounded above,
\end{description}
where the equivalent noise covariances are $\mathbb{E}[w_e(t) w_e^\top(t')]=Q_e(t) \delta(t-t')$, $\mathbb{E}[\overline{w}(t) \overline{w}^\top(t')] =\overline{Q}(t) \delta(t-t')$, $Q_e:=\overline{Q}-G_2 M_2 \grave{R}_2^\top R_2^{-1} \grave{R}_2 M_2^\top G_2^\top$ and $\overline{Q}$  (as defined in Algorithm \ref{algorithm}), then 
the optimal filter given in Algorithm \ref{algorithm} is uniformly asymptotically stable. Moreover, every solution to the variance equation given by the differential Riccati equation, $\dot{P}^x$, in Algorithm \ref{algorithm} starting at $\mathcal{P}^x_0 \succ 0$ converges to a unique $P^x$ as $t \to \infty$.
\end{thm}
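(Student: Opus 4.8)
\emph{Proof proposal.} The strategy is to show that the state estimation error of ELISE obeys exactly the error recursion of a classical Kalman--Bucy filter driven by the unknown-input-free ``virtual'' system \eqref{eq:eqSys}, and then to invoke the Kalman--Bucy stability theorem \cite[Theorem 4]{kalman.bucy.1961} for that virtual system under the hypotheses (A2)--(A5).

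First I would compute the error dynamics. Set $\tilde{x}:=x-\hat{x}$, $\tilde{d}_1:=d_1-\hat{d}_1$, $\tilde{d}_2:=d_2-\hat{d}_2$. Subtracting \eqref{eq:xhat} from the state equation in \eqref{eq:sys} gives $\dot{\tilde{x}}=A\tilde{x}+G_1\tilde{d}_1+G_2\tilde{d}_2+Ww-L(C_2\tilde{x}+v_2)$. From \eqref{eq:dhat1} with $M_1=\Sigma^{-1}$ and $z_1=C_1x+D_1u+\Sigma d_1+v_1$ one gets $\tilde{d}_1=-M_1C_1\tilde{x}-M_1v_1$. For $\tilde{d}_2$, the key observation is that $\overline{z}_2=\overline{T}_2\overline{y}$ carries \emph{no} term in $d$ or $\dot{d}$: Assumption ($A1$)(ii) together with the construction of $\overline{T}_2$ from the SVD of $\overline{H}$ gives $\overline{T}_2\overline{H}=0$, and (by the analogue of Corollary \ref{cor:cor1} applied to $\overline{H}$, or directly by hypothesis) $\overline{T}_2\doverline{H}=0$, so that $\overline{z}_2=\overline{C}_2\dot{x}+\overline{T}_2\doverline{C}x+\overline{D}_2\dot{u}+\overline{T}_2\doverline{D}u+\overline{v}_2$; substituting $\dot{x}=Ax+Bu+G_1d_1+G_2d_2+Ww$ and then \eqref{eq:variant1} expresses $\tilde{d}_2$ as an affine function of $\tilde{x}$, $\tilde{d}_1$, $w$, $v_1$ and $\overline{v}_2$. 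Eliminating $\tilde{d}_1$ and $\tilde{d}_2$ from $\dot{\tilde{x}}$, a careful but routine collection of terms collapses the error dynamics to $\dot{\tilde{x}}=\overline{A}\tilde{x}+\overline{w}-L(C_2\tilde{x}+v_2)$, with $\overline{A}$ and $\overline{w}$ precisely as in Algorithm \ref{algorithm} and Theorem \ref{thm:stable}.

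Second, using ($A1$) and the decoupling properties of $T$ (so that $\mathbb{E}[v_1v_2^\top]=0$ and $\mathbb{E}[wv_i^\top]=0$) together with the assumed cross-statistics of $\overline{v}$, a direct computation yields $\mathbb{E}[\overline{w}(t)\overline{w}(t')^\top]=\overline{Q}\,\delta(t-t')$ and a cross-covariance $\mathbb{E}[\overline{w}(t)v_2(t')^\top]$ proportional to $G_2M_2\grave{R}_2^\top$. Because $\overline{w}$ and $v_2$ are correlated, I would apply the standard noise-decorrelation step, splitting $\overline{w}$ into its $v_2$-correlated part and a residual $w_e$ that is white, uncorrelated with $v_2$, and has covariance $Q_e=\overline{Q}-G_2M_2\grave{R}_2^\top R_2^{-1}\grave{R}_2M_2^\top G_2^\top\succeq 0$ (a Schur complement of the joint covariance of $(\overline{w},v_2)$, hence automatically positive semidefinite). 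This recasts the error recursion as that of a Kalman--Bucy filter for the virtual system \eqref{eq:eqSys}: plugging $L=(P^xC_2^\top-G_2M_2\grave{R}_2^\top)R_2^{-1}$ into \eqref{eq:riccatiEq} and completing the square shows that the variance equation in Algorithm \ref{algorithm} is exactly the standard Kalman--Bucy Riccati equation for $(A_e,C_2,Q_e,R_2)$, with $L$ the associated optimal gain; the term $u_e$ in \eqref{eq:eqSys} is a known output-injection signal that cancels in the error and is irrelevant to stability.

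With the ELISE error dynamics thus identified with those of the Kalman--Bucy filter for \eqref{eq:eqSys}, the hypotheses (A2)--(A5)---uniform complete observability of $(A_e,C_2)$, uniform complete controllability of $(A_e,Q_e^{1/2})$, and uniform upper and lower bounds on $Q_e$, $R_2$ and $A_e$---are exactly those of \cite[Theorem 4]{kalman.bucy.1961}. That theorem then gives: the Riccati equation \eqref{eq:riccatiEq} has a unique bounded solution $P^x\succ0$, every solution starting from $\mathcal{P}^x_0\succ0$ converges to it as $t\to\infty$, and the closed-loop error evolution generated by $A_e-LC_2$ (equivalently by $\breve{A}=\overline{A}-LC_2$) is uniformly asymptotically stable; hence ELISE is uniformly asymptotically stable. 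I expect the main obstacle to be the first step: reducing the coupled $(\tilde{x},\tilde{d}_1,\tilde{d}_2)$ error system to the single closed form $\dot{\tilde{x}}=\overline{A}\tilde{x}+\overline{w}-L(C_2\tilde{x}+v_2)$. This requires (i) verifying that every $\overline{T}_2\overline{H}$- and $\overline{T}_2\doverline{H}$-type term cancels, so the unknown input genuinely disappears from the innovation used for $\hat{d}_2$; (ii) correctly tracking the many noise cross-terms so that they assemble into exactly $\overline{Q}$ and $Q_e$; and (iii) matching the expanded $LR_2L^\top$ against $A_eP^x+P^xA_e^\top+Q_e-P^xC_2^\top R_2^{-1}C_2P^x$. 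Once the virtual-system identification is in place, the invocation of the Kalman--Bucy theorem is essentially mechanical.
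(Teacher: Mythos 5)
Your proposal is correct and follows essentially the same route as the paper: the paper likewise derives the error dynamics $\dot{\tilde{x}}=\overline{A}\tilde{x}+\overline{w}-L(C_2\tilde{x}+v_2)$ (via \eqref{eq:dtilde} and \eqref{eq:xtilde} in the proof of Theorem \ref{thm:main}), identifies this with a `virtual' unknown-input-free system, decorrelates $\overline{w}$ and $v_2$ by adding the zero term $y_e-C_2x_e-v_2=0$ to arrive at \eqref{eq:eqSys}, and then invokes the Kalman--Bucy stability theorem under (A2)--(A5). Your additional remarks on the cancellation of the $\overline{T}_2\overline{H}$ and $\overline{T}_2\doverline{H}$ terms and on $Q_e$ being a Schur complement are consistent with, and slightly more explicit than, the paper's argument.
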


Finally, for the time-invariant case, the conditions under which the algebraic Riccati equation of the filter has a unique stationary solution is given by:

\begin{thm}[Convergence to steady-state of ELISE] \label{thm:conv}
Let ${\rm rk}(\overline{C}_{2} G_{2})=p-p_{H}$. Then, in the time-invariant case with $P^x(t_{0}) \succeq 0$, the filter in Algorithm \ref{algorithm} (exponentially) converges to a unique stationary solution if and only if
(i) $({A}_e,C_2)$ is detectable, and 
(ii) $({A}_e,{Q}_e^{\frac{1}{2}})$ is stabilizable 
where matrices ${A}_e$ and ${Q}_e$ are as defined in Theorem \ref{thm:stable}.
\end{thm}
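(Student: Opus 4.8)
The plan is to reduce Theorem~\ref{thm:conv} to the classical steady-state theory of the continuous-time filtering Riccati equation, applied to the `virtual' equivalent system~\eqref{eq:eqSys} constructed in the proof of Theorem~\ref{thm:stable}. In the time-invariant case $A_e$, $C_2$, $Q_e$ and $R_2$ are all constant, with $R_2\succ 0$ and $Q_e\succeq 0$ (it is the covariance of a noise residual, hence a Schur complement of a positive semidefinite matrix), and the standing hypothesis ${\rm rk}(\overline{C}_2 G_2)=p-p_H$ makes $M_2$, and hence $\overline{A}$, $\overline{Q}$, $A_e$ and $Q_e$, well defined, so that the algebraic Riccati equation (ARE) $A_e P + P A_e^\top + Q_e - P C_2^\top R_2^{-1} C_2 P = 0$ is well posed. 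The first step is to check that, after substituting the optimal gain $L=(P^x C_2^\top - G_2 M_2 \grave{R}_2^\top)R_2^{-1}$ of Algorithm~\ref{algorithm} and absorbing the measurement-noise correlation $\grave{R}_2$ into the shifted pair $(A_e,Q_e)$ exactly as in Theorem~\ref{thm:stable}, the differential Riccati equation $\dot{P}^x=\overline{A} P^x + P^x \overline{A}^\top + \overline{Q} - L R_2 L^\top$ is identically the Kalman--Bucy variance equation $\dot{P}^x = A_e P^x + P^x A_e^\top + Q_e - P^x C_2^\top R_2^{-1} C_2 P^x$ of the filter for \eqref{eq:eqSys}.

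With $\dot{P}^x$ identified as a standard filtering Riccati equation, I would then invoke the classical convergence theorem for the Kalman--Bucy filter (as in \cite{kalman.bucy.1961} together with the standard theory of the continuous-time ARE): starting from any $P^x(t_0)\succeq 0$, the solution $P^x(t)$ exists on $[t_0,\infty)$, remains bounded, and converges as $t\to\infty$ to the unique positive semidefinite stabilizing solution $P^x$ of the ARE if and only if $(A_e,C_2)$ is detectable and $(A_e,Q_e^{1/2})$ is stabilizable. Detectability of $(A_e,C_2)$ is precisely what keeps the trajectory bounded and convergent and what guarantees a stabilizing positive semidefinite solution of the ARE; stabilizability of $(A_e,Q_e^{1/2})$ excludes stable-but-noise-unobservable directions along which the stationary solution would fail to be unique, thereby identifying the limit as the stabilizing solution, independently of $P^x(t_0)\succeq 0$. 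For the converse, if $P^x(t)$ converges to a unique stationary solution for every $P^x(t_0)\succeq 0$, that limit solves the ARE and its uniqueness together with boundedness of the trajectories forces both conditions, again by the same classical theory; this gives the `if and only if'.

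The exponential rate follows by linearising the Riccati flow about its stabilizing equilibrium: with $\delta P(t):=P^x(t)-P^x$ and the closed-loop matrix $\breve{A}_e:=A_e-P^x C_2^\top R_2^{-1} C_2$, one has $\dot{\delta P}=\breve{A}_e\,\delta P+\delta P\,\breve{A}_e^\top-\delta P\,C_2^\top R_2^{-1} C_2\,\delta P$, and since $\breve{A}_e$ is Hurwitz a Lyapunov/comparison argument bounding the flow by its linear part yields $\|P^x(t)-P^x\|\le c\,e^{-2\kappa(t-t_0)}$, where $\kappa$ is the spectral-abscissa margin of $\breve{A}_e$; this is the claimed (exponential) convergence and is consistent with the rate bookkeeping used in Lemma~\ref{lem:convergence}.

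The step I expect to be the main obstacle is the first identification: the gain $M_2$, and through it $\overline{A}$, $\overline{Q}$, $A_e$ and $Q_e$, enters Algorithm~\ref{algorithm} via $\tilde{R}_2$, which itself depends on $P^x$, so a priori the coefficients of the variance equation drift along the flow and it is not literally a constant-coefficient Riccati equation. I would handle this either by appealing directly to the equivalence already established in the proof of Theorem~\ref{thm:stable} (which treats exactly this coupled object), or by a fixed-point argument at steady state: any stationary pair $(P^x,M_2)$ must jointly solve the ARE and the gain relation $M_2=(G_2^\top\overline{C}_2^\top \tilde{R}_2^{-1}\overline{C}_2 G_2)^{-1}G_2^\top\overline{C}_2^\top\tilde{R}_2^{-1}$, and one shows, using ${\rm rk}(\overline{C}_2 G_2)=p-p_H$ and the monotonicity of the differential Riccati flow, that under (i)--(ii) this coupled system has a unique stabilizing solution to which the flow converges, and that otherwise it does not. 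The remaining arguments are routine applications of standard linear-systems and Riccati-equation machinery.
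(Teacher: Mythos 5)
Your proposal is correct and follows essentially the same route as the paper: reduce to the `virtual' equivalent system \eqref{eq:eqSys} with uncorrelated noise constructed in the proof of Theorem \ref{thm:stable}, then invoke the classical detectability/stabilizability characterization of convergence of the Kalman--Bucy variance equation to the unique stabilizing CARE solution. Your discussion of the $M_2$--$P^x$ coupling (the fact that $\tilde{R}_2$, and hence $A_e$ and $Q_e$, drift with $P^x$ along the flow) and of the exponential rate via linearization about the stabilizing equilibrium is actually more careful than the paper's own proof, which simply defers to \cite{kailath.2000}.
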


\subsection{Approximate Linear Input \& State Estimator (ALISE)}

For this second variant, we do not assume the availability of an `output derivative', but that such a signal exists. Hence, the ALISE variant uses a special case of the ELISE filter, and the existence of $\overline{y}$ for this special case can be seen as a pseudo-derivative of the output measurement $y$.
\begin{spc}[Special Case of ELISE]
There exists an `output derivative' signal $\overline{y}$ in \eqref{eq:yoverl}, such that $\overline{C}=C$, $\doverline{C}=\dot{C}$, $\overline{D}=D$, $\doverline{D}=\dot{D}$, $\overline{H}=H$ and $\doverline{H}=\dot{H}$; hence, we have $\overline{C}_2=C_2$, $\overline{D}_2=D_2$ and $\overline{T}_2=T_2=U_2^\top$. \label{sp}
\end{spc}

Moreover, the existence of the `output derivative' signal also implies that the derivatives of the input signals $u$ and $d$, as well as the noise signals $w$ and $v$ must exist, which necessitates non-standard noise models and rather strong assumptions on the disturbance signals:

\begin{assump}{$(\mathbf{A1'})$} \label{A2} We assume that
\begin{enumerate}[(i)]
\item the noise signals, $w(t)$ and $v(t)$, are first- and second-order Gauss-Markov (GM) processes, respectively (see, e.g., \cite[pp. 42-47]{gelb.1974} for their properties):
\begin{align}
\begin{array}{rl}
\dot{w}(t)+A_w w(t) &= B_w w_G(t),\\
\ddot{v}(t)+A_{\dot{v}} \dot{v}(t) + A_v v(t) &=B_v v_G(t), \end{array}\label{eq:secondOrder} 
\end{align} \noindent where $w_G(t)$ and $v_G(t)$ are mutually uncorrelated, zero-mean, white noise signals with time-invariant intensities  $Q_G \succeq 0$ and $R_G \succ 0$, respectively. Furthermore, the correlation times of the process and measurement noise are assumed to be short compared
to times of interest.
The second equation in \eqref{eq:secondOrder} is equivalently rewritten as
\begin{align}
\begin{array}{rl}
 \frac{d}{dt} \underline{v}(t) =& \begin{bmatrix} 0 & I \\ - A_v & -A_{\dot{v}} \end{bmatrix} \underline{v}(t)+\begin{bmatrix} 0 \\ B_v \end{bmatrix} v_G(t) \\
 :=&
\underline{A}_v \overline{v}(t)+ \underline{B}_v v_G(t).\end{array} 
\end{align} \noindent
$A_w$, $A_v$ and $A_{\dot{v}}$ are positive semidefinite diagonal matrices, while $w(t_0)$ and $\underline{v}(t_0):=\begin{bmatrix} {v}(t)^\top & \dot{v}(t)^\top \end{bmatrix}^\top$ have known covariance matrices $\mathcal{P}^w_0$ and $\mathcal{P}^v_0$.
For simplicity, we shall assume for the noise models that $-A_w$ and $\underline{A}_v$ are time-invariant and stable, i.e. their eigenvalues are strictly negative, and that $B_w$, $B_v$, $Q_G$ and $R_G$ are also time-invariant and bounded.
\item the inputs $u(t)$ and $d(t)$ are twice and once differentiable, respectively, and that $u(t)$, $\dot{u}(t)$, $\ddot{u}(t)$, ${d}(t)$ and $\dot{d}(t)$ are bounded, as well as that the norm of the system state vectors, matrices and matrix derivatives are bounded.
\end{enumerate}
\end{assump}

In a nutshell, the noise models in Assumption ($A1'$), i.e., Gauss-Markov stochastic noise models, are stochastic processes that satisfy the requirements for both Gaussian processes and Markov processes, and can be viewed as continuous-time analogues of the discrete-time AR(1) and AR(2) processes. The first-order Gauss-Markov process is also known as the Ornstein-Uhlenbeck process, which has been considered in the models of financial mathematics and physical sciences. 
The noise models are specifically chosen such that the signal $\ddot{z}_2(t)$ is well defined for the purpose of analyzing the proposed filter, as is required by Taylor's theorem in \eqref{eq:taylor} of Appendix \ref{sec:errorBound}, and the assumption of short correlation times is such that the noise terms are not colored\footnote{Note that we do not attempt to solve the estimation problem with colored noise, which is a subject of future research, as this would require the development of state and unknown input filters for systems with correlated noise terms and moreover, output derivatives would need to be computed, as is pointed out in \cite{Bryson.65}.}.
The covariance matrices of the noise models can either be determined in experiments, or simply chosen as tuning parameters, which is commonplace in practice.

The assumption of bounded derivatives of $d$ is also rather strong, but is unfortunately necessary for a meaningful analysis of the input and state filtering problem. 
However, this assumption may actually not be needed in practice, as evidenced by our example in Section \ref{sec:examples} with a non-smooth disturbance. 

For this case, we now propose the following filter: 
\begin{align}
\nonumber \hat{d}_1  &=M_1(z_1-C_1 \hat{x}-D_1 u),\\ 
\nonumber \hat{d}_{2} &= M_2( \frac{z_2(t)-z_2(t-\mathfrak{d}t)}{\mathfrak{d}t}-(C_2 A+T_2 \dot{C}) \hat{x}  \\ & \qquad-C_2 B u-C_2 G_1 \hat{d}_1-D_2 \dot{u}-T_2 \dot{D} u), \label{eq:variant2}\\
\nonumber \hat{d}  &=V_1 \hat{d}_1+V_2 \hat{d}_2, \\  
\nonumber \dot{\theta} &= (\overline{A}-L C_2)(G_2 M_2 z_2 - G_2 M_2 D_2 u + \theta)  \\ 
\nonumber &\qquad  + (\overline{B}\hspace{-0.05cm}-\hspace{-0.05cm}LD_2)u + \overline{G} M_1 z_1 + L z_2\hspace{-0.05cm}-\hspace{-0.05cm} \dot{\Phi}_1 y-\dot{\Phi}_2 u,\\
\hat{x}&=G_2 M_2 z_2 - G_2 M_2 D_2 u + \theta, \label{eq:xhat2}
\end{align} 

\noindent where $\overline{A}$, $\overline{B}$, $\overline{G}$, $\dot{\Phi}_1$ and $\dot{\Phi}_2$ are as defined in Algorithm \ref{algorithm2}, the matrices corresponding to the Special Case \ref{sp} are used in \eqref{eq:variant2} as well as in Algorithm \ref{algorithm2} and the matrices ${L} \in \mathbb{R}^{n \times (l-p_{H})}$, $M_{1} \in \mathbb{R}^{p_{H} \times p_{H}}$ and $M_{2} \in \mathbb{R}^{(p-p_{H}) \times (l-p_{H})}$ are filter gains. 
Note that the output derivatives $\dot{y}$ is essentially obtained by finite difference approximation, $\dot{y} \approx \frac{y(t)-y(t-\mathfrak{d}t)}{\mathfrak{d}t}$, where $\mathfrak{d}t$ can be chosen arbitrarily.

\begin{algorithm}[!t] \footnotesize
\caption{ALISE algorithm}\label{algorithm2}
\begin{algorithmic}[1]
\State Initialize: $P^x(t_{0})=\mathcal{P}^x_0$; $P^w(t_{0})=\mathcal{P}^w_0$;  $P^v(t_{0})=\mathcal{P}^v_0$; $\theta(t_0)=\hat{x}_0-G_2 M_2 z_2(t_0) + G_2 M_2 D_2 u(t_0) $; etc.;
 \While {$t < t_f$}
 \LineComment{State estimation}
 \State $\hat{x}=G_2 M_2 z_2 - G_2 M_2 D_2 u + \theta$;
 \LineComment{Unknown input estimation}
\State $\dot{P}^w=-A_w P^w -P^w A_w^\top + B_w Q_G B_w^\top $;
\State $\dot{P}^v=\overline{A}_v P^v +P^v \overline{A}_v^\top + \overline{B}_v R_G \overline{B}_G^\top$;
 \State $R_{1}= \begin{bmatrix} T_1 & 0 \end{bmatrix} P^v \begin{bmatrix} T_1 & 0 \end{bmatrix}^\top $; 
 \State $R_{2}= \begin{bmatrix} T_2 & 0 \end{bmatrix} P^v \begin{bmatrix} T_2 & 0 \end{bmatrix}^\top $;
 \State $\overline{R}_2= \begin{bmatrix} 0 & T_2 \end{bmatrix} P^v \begin{bmatrix} 0 & T_2 \end{bmatrix}^\top $;  
 \State $\grave{R}_{2}= \begin{bmatrix} T_2 & 0 \end{bmatrix} P^v \begin{bmatrix} 0 & T_2 \end{bmatrix}^\top $; 
 \State $\grave{R}_{12}= \begin{bmatrix} T_1 & 0 \end{bmatrix} P^v \begin{bmatrix} 0 & T_2 \end{bmatrix}^\top $; 
\State  $\hat{A}=A-G_1 M_1 C_1$; 
\State $\hat{Q}=W P^w W^\top+G_1 M_1 R_1 M_1^\top G_1^\top$; 
\State $M_1=\Sigma^{-1}$;
\State $\tilde{R}_2=(C_2 \hat{A}+T_2 \dot{C}) P^x (C_2 \hat{A}+T_2 \dot{C})^\top + C_2 \hat{Q} C_2^\top+\overline{R}_2 $ 
\Statex \hspace{1.5cm} $- \grave{R}_{12}^\top M_1^\top G_1^\top C_2^\top - C_2 G_1 M_1 \grave{R}_{12}$;
\State $M_2=(G_2^\top C_2^\top \tilde{R}_2^{-1} C_2 G_2)^{-1} G_2^\top C_2^\top \tilde{R}_2^{-1}$;
\State $\hat{d}_1=M_1(z_1-C_1 \hat{x}-D_1 u)$;
\State $\overline{d}_2=M_2( \frac{z_2(t)-z_2(t-\mathfrak{d}t)}{\mathfrak{d}t}-(C_2 A+T_2 \dot{C}) \hat{x}-C_2 B u-C_2 G_1 \hat{d}_1$ 
\Statex \hspace{1.5cm} $-D_2 \dot{u}-T_2 \dot{D} u)$;
\State $\hat{d}=V_1 \hat{d}_1+V_2 \overline{d}_2$;
\State $P^d_1=M_1 (C_1 P^x C_1^\top+R_1) M_1^\top$;
\State$P^d_2 \approx(G_2^\top C_2^\top \tilde{R}_2^{-1} C_2 G_2)^{-1}$;
\State $P^d_{12} \approx M_1 C_1 P^x (\hat{A}^\top C_2^\top+\dot{C}^\top T_2^\top) M_2^\top$ 
\Statex \hspace{1.5cm} $-M_1 R_1 M_1^\top G_1^\top C_2^\top M_2^\top + M_1 \grave{R}_{12} M_2^\top$;
\State $P^d \approx V_1 P^d_1 V_1^\top+ V_1 P^d_{12} V_2^\top + V_2 P^{d \top}_{12} V_1^\top + V_2 P^d_2 V_2^\top$;

\LineComment{State estimation}
\State $\overline{A}=(I-G_2M_2 C_2) \hat{A}-G_2 M_2 T_2 \dot{C}$; 
\State $\overline{B}=(I-G_2M_2 C_2) (B-G_1 M_1 D_1)-G_2 M_2 T_2 \dot{D}$;
\State $\overline{G}=(I-G_2M_2 C_2) G_1$; $\overline{Q}=(I-G_2M_2 C_2) \hat{Q} (I-G_2M_2 C_2)^\top$;
\State $L=(P^x C_2^\top -G_2 M_2 \grave{R}_2^\top) R_2^{-1}$;
\State $\dot{P}^x=\overline{A} P^x + P^x \overline{A}^\top +\overline{Q} - L R_2 L^\top$;
\State $E$, $F$ and $\dot{\Sigma}$ according to Section \ref{sec:SVD}, e.g., \eqref{eq:EF}; 
\State $\dot{T}_1=U_1^\top R U_2 (U_2^\top R U_2)^{-1} (U_2^\top \dot{R} U_2) (U_2^\top R U_2)^{-1}+E^\top U_1^\top$ 
\Statex \hspace{1.5cm} $-U_1^\top \dot{R} U_2 (U_2^\top R U_2)^{-1}- E^\top U_1^\top R U_2 (U_2^\top R U_2)^{-1}$;
\State $\dot{M}_1=-\Sigma^{-1} \dot{\Sigma} \Sigma^{-1}$; 
\State $\dot{\grave{R}}_{12}=\begin{bmatrix} \dot{T}_1 & 0 \end{bmatrix} P^v \begin{bmatrix} 0 & T_2 \end{bmatrix}^\top+\begin{bmatrix} T_1 & 0 \end{bmatrix} \dot{P}^v \begin{bmatrix} 0 & T_2 \end{bmatrix}^\top$;
\State $\dot{R}_1=\begin{bmatrix} \dot{T}_1 & 0 \end{bmatrix} P^v \begin{bmatrix} {T}_1 & 0 \end{bmatrix}^\top + \begin{bmatrix} {T}_1 & 0 \end{bmatrix} \dot{P}^v \begin{bmatrix} {T}_1 & 0 \end{bmatrix}^\top+ \begin{bmatrix} {T}_1 & 0 \end{bmatrix} P^v \begin{bmatrix} \dot{T}_1 & 0 \end{bmatrix}^\top$;
\State $\dot{\hat{A}}=\dot{A} - (\dot{G} V_1 + G V_1 F) M_1 C_1 - G_1 \dot{M}_1 C_1 - G_1 M_1 (T_1 \dot{C} + \dot{T}_1 C)$;
\State $\dot{\hat{Q}}=(\dot{G} V_1 + G V_1 F) M_1 R_1 M_1^\top G_1^\top + G_1 \dot{M}_1 R_1 M_1^\top G_1^\top $ 
\Statex \hspace{1.5cm} $+W \dot{P^w} W^\top + \dot{W} P^w W^\top + W P^w \dot{W} +G_1 M_1 R_1 \dot{M}_1^\top G_1^\top$
\Statex \hspace{1.5cm} $+G_1 M_1 R_1 M_1^\top \dot{G}_1^\top+G_1 M_1 \dot{R}_1 M_1^\top G_1^\top $;

\State $\dot{\tilde{R}}_2=(T_2 \dot{C} \hat{A} + C_2 \dot{\hat{A}} + T_2 \ddot{C})P^x (C_2 \hat{A} +T_2 \dot{C})^\top+(C_2 \hat{A}$
\Statex \hspace{1.5cm} $+T_2 \dot{C}) \dot{P}^x (C_2 \hat{A} +T_2 \dot{C})^\top+(C_2 \hat{A} +T_2 \dot{C})P^x(T_2 \dot{C} \hat{A} $
\Statex \hspace{1.5cm} $+ C_2 \dot{\hat{A}}+ T_2 \ddot{C})^\top + C_2 \dot{\hat{Q}}C_2^\top + T_2 \dot{C} \hat{Q} C_2^\top+C_2 \hat{Q} \dot{C}^\top T_2^\top$
\Statex \hspace{1.5cm} $+ \begin{bmatrix} 0 & T_2\end{bmatrix} \dot{P}^v \begin{bmatrix} 0 & T_2\end{bmatrix}^\top -\dot{\grave{R}}_{12}^\top M_1^\top G_1^\top C_2^\top-{\grave{R}}_{12}^\top \dot{M}_1^\top G_1^\top C_2^\top$
\Statex \hspace{1.5cm} $-{\grave{R}}_{12}^\top {M}_1^\top (V_1^\top \dot{G}^\top  + F^\top V_1^\top G^\top) C_2^\top$
\Statex \hspace{1.5cm} $-{\grave{R}}_{12}^\top {M}_1^\top G_1^\top \dot{C}^\top T_2^\top -C_2 G_1 M_1 \dot{\grave{R}}_{12}-C_2 G_1 \dot{M}_1 {\grave{R}}_{12}$
\Statex \hspace{1.5cm} $-C_2 (\dot{G} V_1 + G V_1 F) M_1 {\grave{R}}_{12}-T_2 \dot{C} G_1 M_1 {\grave{R}}_{12}$;

\State $\dot{M}_2=P^d_2(-G_2^\top C_2^\top \tilde{R}_2^{-1} \dot{\tilde{R}}_2 \tilde{R}_2^{-1}+V_2^\top \dot{G}^\top C_2^\top \tilde{R}_2^{-1} $ 
\Statex \hspace{1.5cm} $+G_2^\top \dot{C}^\top U_2 \tilde{R}_2^{-1})-P^d_2 (- G_2^\top C_2^\top \tilde{R}_2^{-1} \dot{\tilde{R}}_2 \tilde{R}_2^{-1} C_2 G_2$
\Statex \hspace{1.5cm} $+V_2^\top \dot{G}^T C_2^\top \tilde{R}_2^{-1} C_2 G_2 + G_2^\top \dot{C} ^\top U_2 \tilde{R}_2^{-1} C_2 G_2 $ 
\Statex \hspace{1.5cm} $+ G_2^\top C_2^\top \tilde{R}_2^{-1} U_2^\top \dot{C} G_2+G_2^\top C_2^\top \tilde{R}_2^{-1} C_2 \dot{G} V_2)  M_2$;
\State $\dot{\Phi}_1=\dot{G} V_2 M_2 U_2^\top + G_2 \dot{M_2} U_2^\top$; 
\State $\dot{\Phi}_2=-\dot{G} V_2 M_2 D_2^\top - G_2 \dot{M_2} D_2-G_2 M_2 U_2^\top \dot{D}$;
\State $\dot{\theta} = (\overline{A}-L C_2) (G_2 M_2 z_2 - G_2 M_2 D_2 u + \theta)+ (\overline{B}-LD_2)u $ 
\Statex \hspace{1.5cm} $+ \overline{G} M_1 z_1 + L z_2-\dot{\Phi}_1 y - \dot{\Phi}_2 u$;
\EndWhile
\end{algorithmic}
\end{algorithm} \normalsize

The ALISE variant is summarized in Algorithm \ref{algorithm2} and has some nice properties that are described here and proven in the Appendix. Similar to the ELISE variant, assuming that the filter is uniformly asymptotically stable\footnote{See \cite{Kalman.Bertram.1960} for the definition of uniform asymptotic stability.} (verified later in Theorem \ref{thm:stable2}), the initial state and unknown input estimate biases converge exponentially. 

\begin{lem}[Convergence of state estimate bias of ALISE] \label{lem:convergence2}
Let ALISE be uniformly asymptotically stable 
and $\breve{A}:=\overline{A}-L C_2$ be bounded. Then, the state estimate bias, $\mathbb{E}[\tilde{x}]:=\mathbb{E}[x-\hat{x}]$, decays exponentially, as in \eqref{eq:stateError} and \eqref{eq:betagamma}. 
\end{lem}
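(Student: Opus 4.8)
The plan is to derive the error dynamics for $\tilde{x} = x - \hat{x}$ under the ALISE filter and show that, in expectation, they reduce to a linear homogeneous ODE governed by $\breve{A} := \overline{A} - LC_2$, exactly as in the ELISE case (Lemma~\ref{lem:convergence}). First I would write out $\dot{\hat{x}}$ implied by the state update \eqref{eq:xhat2}: differentiating $\hat{x} = G_2 M_2 z_2 - G_2 M_2 D_2 u + \theta$ and substituting the expression for $\dot\theta$ from Algorithm~\ref{algorithm2}, together with $\dot z_2 = \dot C_2 x + C_2\dot x + \cdots$ and the finite-difference-based $\hat d_2$ in \eqref{eq:variant2}. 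The terms $\dot\Phi_1 y + \dot\Phi_2 u$ are precisely engineered to cancel the $\frac{d}{dt}(G_2 M_2 z_2 - G_2 M_2 D_2 u)$ contribution, so after this bookkeeping the filter realizes an implicit estimator of the same form as ELISE's \eqref{eq:xhat} with the Special Case~\ref{sp} matrices. Then I would subtract $\dot{\hat x}$ from the "reduced" state dynamics $\dot x = \overline{A}x + \overline{B}u + \overline{G}M_1 z_1 + (\text{noise})$ obtained by eliminating $d_1$ and $d_2$ from \eqref{eq:system} using $\hat d_1 = M_1(z_1 - C_1\hat x - D_1 u)$ and the estimate of $d_2$.

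The key step is that, upon subtraction, the deterministic part of $\dot{\tilde x}$ equals $(\overline{A} - LC_2)\tilde x = \breve{A}\tilde x$ plus a residual term coming solely from (a) the finite-difference approximation error in replacing $\dot z_2$ by $\frac{z_2(t)-z_2(t-\mathfrak{d}t)}{\mathfrak{d}t}$, and (b) the bounded-derivative input residuals. By Assumption~$(A1')$(ii) the inputs $u,\dot u,\ddot u,d,\dot d$ are bounded, so Taylor's theorem (as invoked in \eqref{eq:taylor} of Appendix~\ref{sec:errorBound}) makes this residual $O(\mathfrak{d}t)$ and, crucially, the noise terms $w_e$-type contributions are zero-mean; hence $\mathbb{E}[\dot{\tilde x}] = \breve{A}\,\mathbb{E}[\tilde x] + \rho(t)$ where $\rho(t)$ is a bounded forcing term. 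Since the filter is uniformly asymptotically stable, $\breve A$ generates an exponentially stable transition matrix, and boundedness of $\breve A$ (the added hypothesis) lets us invoke the same argument as in Lemma~\ref{lem:convergence}: the homogeneous part decays like $\beta e^{-\gamma(t-t_0)}$ with $\beta,\gamma$ as in \eqref{eq:betagamma}, and the bounded forcing contributes at most a term of the same exponential order (absorbed into the constant, in the limiting sense $\mathfrak{d}t \to 0$ the forcing vanishes).

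The main obstacle I anticipate is the algebraic verification that the $\dot\Phi_1, \dot\Phi_2, \dot\theta$ construction in Algorithm~\ref{algorithm2} indeed produces error dynamics with the clean leading term $\breve A \tilde x$ — i.e., confirming that all the time-derivative-of-system-matrix cross terms ($\dot T_1, \dot M_1, \dot M_2, \dot{\hat A}, \dot{\hat Q}$, etc.) cancel exactly rather than leaving a spurious multiplicative perturbation of $\breve A$. This is the same kind of bookkeeping that underlies the ELISE lemmas but is heavier here because ALISE must differentiate the SVD factors (via Theorem~\ref{thm:Hdot} and Corollary~\ref{cor:cor1}). Once that cancellation is established, the exponential decay conclusion follows verbatim from the proof of Lemma~\ref{lem:convergence}, which is why the statement directly reuses \eqref{eq:stateError} and \eqref{eq:betagamma}. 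I would therefore structure the proof as: (1) reduce ALISE to an implicit ELISE-type estimator under Special Case~\ref{sp}; (2) show the residual is zero-mean plus $O(\mathfrak{d}t)$ forcing; (3) cite Lemma~\ref{lem:convergence} for the exponential bound.
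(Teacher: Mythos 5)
Your step (1) — reducing the ALISE state estimator to an implicit ELISE-type estimator under Special Case \ref{sp} — is exactly the paper's route, and in fact it is lighter than you anticipate: the verification is a one-line differentiation of $\check{x}=\Phi_1 y+\Phi_2 u+\theta$, in which the $\dot{\Phi}_1 y+\dot{\Phi}_2 u$ terms contributed by $\dot{\theta}$ cancel the corresponding terms from the product rule, leaving $\dot{\check{x}}=\Phi_1\dot{y}+\Phi_2\dot{u}+g(\check{x},u,z_1,z_2)$, i.e.\ precisely the ELISE update \eqref{eq:xhat} written with the true $\dot{y}$, $\dot{u}$. The heavy bookkeeping you worry about ($\dot{T}_1,\dot{M}_1,\dot{M}_2,\dot{\hat{A}},\dot{\hat{Q}}$, the SVD derivatives) is only needed to \emph{implement} $\dot{\Phi}_1,\dot{\Phi}_2$ in Algorithm \ref{algorithm2}; it plays no role in the equivalence argument.

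The genuine gap is your step (2). You assert that the finite-difference replacement of $\dot{z}_2$ injects an $O(\mathfrak{d}t)$ forcing term $\rho(t)$ into the state-error dynamics, and you then conclude exponential decay only "in the limiting sense $\mathfrak{d}t\to 0$." But in ALISE the finite difference enters only the \emph{reported} input estimate $\overline{d}_2$ in \eqref{eq:variant2}; the state propagation \eqref{eq:xhat2} never uses it — by the $\theta$-construction (and the initialization $\theta(t_0)=\hat{x}_0-G_2M_2 z_2(t_0)+G_2M_2D_2u(t_0)$), the ALISE state estimate coincides \emph{exactly} with the ELISE state estimate that has direct access to $\dot{y}$. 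Consequently the bias dynamics are exactly $\frac{d}{dt}\mathbb{E}[\tilde{x}]=(\overline{A}-LC_2)\mathbb{E}[\tilde{x}]$ with no residual forcing, and Lemma \ref{lem:convergence}'s argument (with the Gauss--Markov-based covariances $P^w,P^v$ supplying $Q,R,\overline{R},\grave{R}$) gives the pure exponential bound \eqref{eq:stateError}--\eqref{eq:betagamma}. Your version would only yield convergence of the bias to an $O(\mathfrak{d}t)$ neighborhood — which is both weaker than and inconsistent with the lemma's claim, since $\mathfrak{d}t$ is a fixed design parameter, not a quantity sent to zero. The $O(\mathfrak{d}t)$ and $O((\mathfrak{d}t)^2)$ penalties belong only to the \emph{input} estimate, i.e.\ to Lemma \ref{lem:convergenceInput2} via \eqref{eq:inputErrALISE}, not to the state estimate treated here.
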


\begin{lem}[Convergence of unknown input estimate bias of ALISE] \label{lem:convergenceInput2}
Let ALISE be uniformly asymptotically stable. 
Then, 
the unknown  input estimate convergence properties for ALISE (with matrices as given in Special Case \ref{sp}) 
are given by
\begin{align}
 \|\mathbb{E}[d-\hat{d}]\| := \|\mathbb{E}[\tilde{d}]\| &\leq \alpha_1 e^{-\gamma (t-t_0)} + \alpha_2 \mathfrak{d}t, \label{eq:inputErrALISE}\\
 |{\rm tr}(P^{d} -P^{\overline{d}})|  &\leq  \alpha_3 (\mathfrak{d}t)^2, \label{eq:traceDiff}
\end{align}

\noindent with $\alpha_1$ given in \eqref{eq:alpha1}, as well as $\alpha_2$ and $\alpha_3$ given by \begin{align}
\alpha_2=  \frac{1}{2} \sup \| V_2 M_2 \mathbb{E}[\ddot{z}_2]\| , \ \alpha_3= \sup | \zeta |, \label{eq:alpha23}
\end{align}
with $\mathbb{E}[\ddot{z}_2]$ and $\zeta$ given by
\begin{align}
&\hspace{-0.2cm}\begin{array}{rl}
\mathbb{E}[\ddot{z}_2] &=T_2[(2 \dot{C} A\hspace{-0.05cm} +\hspace{-0.05cm}C \dot{A}\hspace{-0.05cm}+\hspace{-0.05cm}C A^2 \hspace{-0.05cm}+\hspace{-0.05cm} \ddot{C})x +(\dot{C} B\hspace{-0.05cm} + \hspace{-0.05cm}C A B\\ &+\hspace{-0.05cm} C \dot{B} \hspace{-0.05cm}+\hspace{-0.05cm} \dot{C} B \hspace{-0.05cm}+\hspace{-0.05cm} \ddot{D})u + (CB \hspace{-0.1cm}+\hspace{-0.1cm} 2 \dot{D}) \dot{u} + D \ddot{u} +(2 C \dot{G}_1 \\ &
+ CA G_1 + C \dot{G}_2) d_1 +C G_2 \dot{d}_1
+(2 \dot{C} G_2 +CAG_2 \\
& + C \dot{G}_2) d_2 + C G_2 \dot{d}_2],
\end{array}\hspace{-0.1cm} 
\end{align}
\begin{align}
&\begin{array}{rl}
\hspace{-0.1cm}\zeta&=\big|\frac{1}{4} {\rm tr}(M_2 T_2(C W B_w Q_G B_w^\top  W^\top C^\top +\begin{bmatrix} -A_v & -A_{\dot{v}} \end{bmatrix} \\
&\quad P^v \begin{bmatrix} -A_v & -A_{\dot{v}} \end{bmatrix}^\top + B_v R_G B_v^\top +(\dot{C}W + C \dot{W} \\
  &  \ - C WA_w)P^w(\dot{C}W + C \dot{W} - C WA_w)^\top )  T_2^\top M_2^\top)\big|,\hspace{-0.1cm}
 \end{array}
\end{align} \noindent
assuming that $x$ is bounded, whereas $P^w$ and $P^v$ are bounded as a result of Assumption  ($A1'$)\footnote{With the assumptions in Assumption ($A1'$), $P^w$ and $P^v$ are bounded and their bounds can be found in \cite{Kalman.1960,Anderson.1967}.}. $P^{d}$ is the error covariance matrix of ALISE, $P^{\overline{d}}$ is the error covariance matrix of the best linear unbiased input estimate assuming direct access to $\dot{y}$, and $\mathfrak{d}t$ can be chosen to be arbitrarily small. 
\end{lem}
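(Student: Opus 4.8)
The plan is to establish the two estimates \eqref{eq:inputErrALISE} and \eqref{eq:traceDiff} by comparing ALISE against the fictitious ``exact'' filter of Special Case \ref{sp}, i.e., the ELISE variant that would have direct access to the true output derivative $\dot{y}$ (equivalently, to $\overline{y}$ with the matrices in Special Case \ref{sp}). Denote by $\overline{d}_2$ the input estimate that exact filter would produce and by $\hat{d}_2$ the ALISE estimate in \eqref{eq:variant2}; the only structural difference is that ALISE replaces $\dot{z}_2(t)=C_2\dot x + \dot D_2 u + D_2\dot u + \dots$ (the $z_1$-term already being handled) by the forward finite difference $\tfrac{z_2(t)-z_2(t-\mathfrak{d}t)}{\mathfrak{d}t}$. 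First I would write, via Taylor's theorem (as invoked in \eqref{eq:taylor} of Appendix \ref{sec:errorBound}), $\tfrac{z_2(t)-z_2(t-\mathfrak{d}t)}{\mathfrak{d}t} = \dot z_2(t) - \tfrac{\mathfrak{d}t}{2}\ddot z_2(\xi)$ for some $\xi\in[t-\mathfrak{d}t,t]$; here the noise models of Assumption $(A1')$ are exactly what guarantees $\ddot z_2$ is well defined in the mean-square/expectation sense. Taking expectations and using that $\mathbb{E}[\ddot z_2]$ is the displayed expression (obtained by differentiating $z_2=C_2 x + D_2 u + v_2$ twice and substituting the state dynamics \eqref{eq:sys} and the noise dynamics), I get $\mathbb{E}[\hat d_2 - \overline d_2] = -\tfrac{\mathfrak{d}t}{2} M_2 T_2 \mathbb{E}[\ddot z_2(\xi)] + (\text{terms driven by } \mathbb{E}[\tilde x])$, so that $\|\mathbb{E}[\hat d_2 - \overline d_2]\| \le \alpha_2 \mathfrak{d}t + (\text{bias-in-}\hat x \text{ contribution})$ with $\alpha_2$ as in \eqref{eq:alpha23}.

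Next I would combine this with the exact filter's own guarantees. By Theorem \ref{thm:main} (applied to the Special Case \ref{sp} instance, which is the exact filter ALISE approximates), the exact filter is unbiased, so $\mathbb{E}[d-V_1\overline d_1 - V_2\overline d_2]$ reduces entirely to the state-bias-driven term, which decays like $\alpha_1 e^{-\gamma(t-t_0)}$ exactly as in Lemma \ref{lem:convergenceInput} — note $\hat d_1$ is common to both filters and is handled identically there. Adding the finite-difference perturbation via the triangle inequality gives $\|\mathbb{E}[\tilde d]\| \le \alpha_1 e^{-\gamma(t-t_0)} + \alpha_2 \mathfrak{d}t$, which is \eqref{eq:inputErrALISE}; the $e^{-\gamma(t-t_0)}$ rate and the constant $\alpha_1$ are inherited verbatim from Lemma \ref{lem:convergenceInput}, using Lemma \ref{lem:convergence2} for the boundedness of $\breve A$ and the decay of $\mathbb{E}[\tilde x]$.

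For the covariance statement \eqref{eq:traceDiff}, I would compute $P^{d}$ (the ALISE error covariance) and $P^{\overline d}$ (the exact filter's error covariance, i.e., $P^{d}$ evaluated with $\dot z_2$ in place of the difference quotient) and subtract. The discrepancy enters only through the $d_2$-block, and from the Taylor remainder the extra term in the innovation covariance is $\tfrac{(\mathfrak{d}t)^2}{4}\,\mathrm{cov}$ of $M_2 T_2 \ddot z_2$ together with cross terms; since cross terms between the $O(\mathfrak{d}t)$ remainder and the $O(1)$ innovation vanish in expectation once the bias part is separated (or contribute at the same $O((\mathfrak{d}t)^2)$ order after bounding), the leading correction is $\tfrac{(\mathfrak{d}t)^2}{4}$ times the trace of $M_2 T_2(\cdot)T_2^\top M_2^\top$ where $(\cdot)$ collects the covariances of the process-noise term $CWB_wQ_GB_w^\top W^\top C^\top$, the measurement-noise term $\begin{bmatrix}-A_v & -A_{\dot v}\end{bmatrix}P^v\begin{bmatrix}-A_v & -A_{\dot v}\end{bmatrix}^\top + B_vR_GB_v^\top$, and the $\dot w$-induced term $(\dot C W + C\dot W - CWA_w)P^w(\cdot)^\top$ — precisely the expression defining $\zeta$. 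Taking $\sup|\zeta|=\alpha_3$, finite because $x$, $P^w$, $P^v$ and all system matrices/derivatives are bounded under $(A1')$, yields $|{\rm tr}(P^{d}-P^{\overline d})| \le \alpha_3(\mathfrak{d}t)^2$.

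The main obstacle I anticipate is \emph{not} the Taylor expansion itself but rigorously justifying that $\ddot z_2$ exists and has finite second moments so that $\mathbb{E}[\ddot z_2]$ and the covariance in $\zeta$ are meaningful — this is where Assumption $(A1')$'s Gauss--Markov (first/second order) noise structure and the short-correlation-time stipulation must be invoked carefully, since the naive derivative of white noise is ill-defined. A secondary technical point is controlling the cross-covariance terms between the $O(\mathfrak{d}t)$ remainder and the zeroth-order innovation to confirm they do not degrade the $O((\mathfrak{d}t)^2)$ rate in \eqref{eq:traceDiff}; I would handle this by orthogonality of the remainder's mean-subtracted part against the $v_2$, $w$ and $v_1$ innovations, leveraging the whiteness structure established for $z_1$, $z_2$ in Section \ref{sec:similar}.
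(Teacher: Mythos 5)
Your proposal matches the paper's proof in all essentials: a Taylor expansion of the backward difference quotient yields the $\tfrac{1}{2}M_2\mathbb{E}[\ddot z_2(c)]\,\mathfrak{d}t$ perturbation of $\hat d_2$ relative to the exact-derivative estimate $\overline d_2$, the bias bound then follows by adding this to the exponentially decaying state-bias contribution inherited from Lemma \ref{lem:convergenceInput}, and the covariance gap comes from the $\tfrac{(\mathfrak{d}t)^2}{4}$-scaled covariance of $\ddot z_2$ conjugated by $M_2T_2$, with cross terms discarded by the whiteness/time-separation argument (the paper phrases this as $\delta(t-c)\ll\delta(0)$ for $c\neq t$). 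This is essentially the paper's own argument, including the identification of Assumption $(A1')$ as what makes $\ddot z_2$ well defined.
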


The next theorem shows that state estimate of ALISE is unbiased and optimal, but the input estimate of ALISE is only approximately unbiased to any precision. 

\begin{thm}[Minimum-variance unbiased state estimation of ALISE] \label{thm:main2}
Suppose ($A1'$) holds. If ${\rm rk}(C_{2} G_{2})=p-p_{H}$ and $(\overline{A},C_2)$ is detectable, where the matrix $\overline{A}$ is as defined in Algorithm \ref{algorithm2}, then the filter gains, $L$, $M_1$ and $M_2$, given in Algorithm \ref{algorithm2} along with the differential Riccati equation given in \eqref{eq:riccatiEq} provide the unbiased minimum-variance unbiased estimate of system states. 

Moreover, if the filter is uniformly asymptotically stable, ALISE 
satisfies the error bounds for state estimate bias in \eqref{eq:stateError}, and its bound on the unknown input estimate bias due to initial state estimate bias and approximation errors is given by \eqref{eq:inputErrALISE} and \eqref{eq:traceDiff}.
\end{thm}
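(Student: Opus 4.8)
The plan is to reduce the state-estimation claim to the already-proved Theorem~\ref{thm:main} for ELISE, and to read off the error bounds from Lemmas~\ref{lem:convergence2} and~\ref{lem:convergenceInput2}; the only genuinely new ingredient is an algebraic identity showing that the $\theta$-parametrized recursion of Algorithm~\ref{algorithm2} reproduces, without ever using $\overline y$, the state estimate that ELISE would produce if the true output derivative were available. First I would check that under Special Case~\ref{sp} and ($A1'$) the postulated signal $\overline y$ in \eqref{eq:yoverl} is consistent with the genuine derivative $\dot y$: the differentiability in ($A1'$)(ii) and the second-order Gauss--Markov model in ($A1'$)(i) make $\dot u,\dot d,\dot v$ (hence $\dot z_2=T_2\dot y$, and also $\ddot z_2$) well defined, and using $\dot U_2=\dot V_2=0$ from Theorem~\ref{thm:Hdot} (so $T_2H=T_2\dot H=0$, $\dot T_2=0$, $\dot C_2=T_2\dot C$, $\dot D_2=U_2^\top\dot D$) one finds that $\dot z_2$ has exactly the structure $(C_2A+T_2\dot C)x+C_2Bu+C_2G_1d_1+C_2G_2d_2+\cdots+\dot v_2$ of the ELISE signal $\overline z_2$ with Special-Case matrices and noise covariance $\overline R_2$ formed from $P^v$. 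Hence, in the short-correlation-time regime of ($A1'$), the ELISE filter of Algorithm~\ref{algorithm} instantiated with these matrices and with $\overline z_2$ taken to be the true $\dot z_2$ is, by Theorem~\ref{thm:main}, an unbiased minimum-variance state filter under exactly the present hypotheses, since $\mathrm{rk}(\overline C_2G_2)=\mathrm{rk}(C_2G_2)=p-p_H$ and $(\overline A,C_2)$ is detectable, and its error covariance solves \eqref{eq:riccatiEq}.

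The crux is to show Algorithm~\ref{algorithm2} computes precisely this estimate. I would substitute $\hat x=G_2M_2z_2-G_2M_2D_2u+\theta$, $\hat d_1=M_1(z_1-C_1\hat x-D_1u)$ and $\hat d_2=M_2(\dot z_2-(C_2A+T_2\dot C)\hat x-C_2Bu-C_2G_1\hat d_1-D_2\dot u-T_2\dot D u)$ into the ELISE state update $\dot{\hat x}=A\hat x+Bu+G_1\hat d_1+G_2\hat d_2+L(z_2-C_2\hat x-D_2u)$ and collect terms: the coefficient of $\hat x$ collapses to $\overline A-LC_2$ via $\overline A=(I-G_2M_2C_2)\hat A-G_2M_2T_2\dot C$, the coefficient of $u$ to $\overline B-LD_2$, the $z_1$-term to $\overline G M_1z_1$, leaving a residual $L z_2+G_2M_2\dot z_2-G_2M_2D_2\dot u$. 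Then, differentiating $\hat x=G_2M_2z_2-G_2M_2D_2u+\theta$ with $z_2=U_2^\top y$ and $\dot U_2=\dot V_2=0$ (so $\tfrac{d}{dt}(G_2M_2)=\dot G V_2M_2+G_2\dot M_2$), the $G_2M_2\dot z_2$ and $G_2M_2D_2\dot u$ contributions cancel against $\tfrac{d}{dt}(G_2M_2z_2)$ and $\tfrac{d}{dt}(G_2M_2D_2u)$, and what remains is exactly $\dot\theta=(\overline A-LC_2)(G_2M_2z_2-G_2M_2D_2u+\theta)+(\overline B-LD_2)u+\overline G M_1z_1+Lz_2-\dot\Phi_1y-\dot\Phi_2u$ with $\dot\Phi_1=\tfrac{d}{dt}(G_2M_2)U_2^\top$ and $\dot\Phi_2=-\tfrac{d}{dt}(G_2M_2D_2)$, i.e. Algorithm~\ref{algorithm2}. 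Therefore $\hat x$ equals the ELISE state estimate, so it is unbiased and minimum variance with covariance $P^x$ from \eqref{eq:riccatiEq}; crucially this identity needs $\overline y$ only to \emph{exist}, not to be measured, which is what makes ALISE both implementable and as good as ELISE for the state.

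For the remaining claims, the finite-difference surrogate $(z_2(t)-z_2(t-\mathfrak{d}t))/\mathfrak{d}t$ enters \emph{only} $\overline d_2$ in \eqref{eq:variant2}; by Taylor's theorem it equals $\dot z_2(t)-\tfrac{\mathfrak{d}t}{2}\ddot z_2(\xi)$, so $\overline d_2$ differs from the ELISE minimum-variance input estimate by $O(\mathfrak{d}t)$, which combined with the exactness of $\hat d_1$ yields the approximate-unbiasedness statement and the bound \eqref{eq:inputErrALISE}, while the $O((\mathfrak{d}t)^2)$ covariance gap \eqref{eq:traceDiff} follows analogously --- these are precisely Lemma~\ref{lem:convergenceInput2}. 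The state-bias bound \eqref{eq:stateError}--\eqref{eq:betagamma} is Lemma~\ref{lem:convergence2}, available once the filter is assumed uniformly asymptotically stable. Assembling these gives the theorem.

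The hard part will be the bookkeeping in the algebraic identity: verifying that every time-derivative quantity appearing in Algorithm~\ref{algorithm2} --- $\dot T_1$, $\dot M_1$, $\dot{\hat A}$, $\dot{\hat Q}$, $\dot{\tilde R}_2$, $\dot M_2$, $\dot{\grave R}_{12}$, $\dot\Phi_1$, $\dot\Phi_2$ --- is exactly the chain-rule derivative of the corresponding ELISE object, and that the Special-Case simplifications $\dot U_2=\dot V_2=0$ and $T_2H=T_2\dot H=0$ (Theorem~\ref{thm:Hdot}, Corollary~\ref{cor:cor1}) are used consistently throughout. A secondary obstacle is making rigorous the passage from the colored Gauss--Markov noise of ($A1'$) to the white-noise setting under which Theorem~\ref{thm:main} was established, where the short-correlation-time assumption is what carries the argument and justifies using the instantaneous covariances $R_1,R_2,\overline R_2,\grave R_2,\grave R_{12}$ (from $P^v$) and $\hat Q$ (from $P^w$) as the effective noise intensities.
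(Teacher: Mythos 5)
Your proposal follows essentially the same route as the paper: the paper's own proof of this theorem reduces the state-estimation claim to Theorem~\ref{thm:main} via exactly the algebraic equivalence you describe (the $\theta$-parametrized recursion with $\hat{x}=\Phi_1 y+\Phi_2 u+\theta$, whose derivative reproduces the ELISE update $g(\hat{x},u,z_1,z_2)+\Phi_1\dot{y}+\Phi_2\dot{u}$ without ever accessing $\dot{y}$ or $\dot{u}$), and then cites Lemmas~\ref{lem:convergence2} and~\ref{lem:convergenceInput2} (whose proof is the Taylor-remainder argument you sketch) for the bias bounds, with the Gauss--Markov covariances $P^w,P^v$ substituted as the effective noise intensities. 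Your additional checks --- consistency of the postulated $\overline{y}$ with the true $\dot{y}$ under Special Case~\ref{sp}, and the term-by-term collapse of the $\hat{x}$-coefficient to $\overline{A}-LC_2$ --- are correct and match the computation in the paper's Appendix.
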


Once again, the optimality of the ALISE filter does not guarantee that the filter is stable. Additional assumptions are needed for the uniform asymptotic stability of the filter. 

\begin{thm}[Stability of ALISE] \label{thm:stable2}
Using Assumption ($A1'$) and the proposed filter, we obtain a `virtual' equivalent system \eqref{eq:eqSys} with matrices as given in Special Case \ref{sp}. If the equivalent system satisfies Assumptions ($A2$), ($A3$), ($A4$) and ($A5$) in Theorem \ref{thm:stable}, then the optimal filter given in Algorithm \ref{algorithm2} is uniformly asymptotically stable. Moreover, every solution to the differential Riccati equation, $\dot{P}^x$, in Algorithm \ref{algorithm2} starting at  $\mathcal{P}^x_0 \succ 0$ converges to a unique $P^x$ as $t \to \infty$.
\end{thm}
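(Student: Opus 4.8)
The plan is to leverage the `virtual' equivalent system construction together with Theorem \ref{thm:stable}, so that the ALISE stability proof becomes a near-corollary of the ELISE stability result. First I would establish that, under the Special Case \ref{sp} identifications ($\overline{C}=C$, $\doverline{C}=\dot{C}$, $\overline{D}=D$, $\doverline{D}=\dot{D}$, $\overline{H}=H$, $\doverline{H}=\dot{H}$, hence $\overline{C}_2=C_2$, $\overline{T}_2=T_2=U_2^\top$), the closed-loop error dynamics of ALISE coincide, up to the finite-difference approximation term of order $\mathfrak{d}t$, with those of ELISE specialized to this case. Concretely, I would compute $\dot{\tilde{x}}$ from \eqref{eq:variant2}--\eqref{eq:xhat2}, substitute $\hat{d}_1$, $\hat{d}_2$, and show that the homogeneous part of the error evolves as $\dot{\tilde{x}}_e = A_e \tilde{x}_e + \tilde{w}_e$ with $A_e:=\overline{A}-G_2 M_2\grave{R}_2^\top R_2^{-1}$ and $\overline{A}$ as defined in Algorithm \ref{algorithm2} — i.e., exactly the virtual system \eqref{eq:eqSys} with the Special Case matrices. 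The presence of the finite difference $\frac{z_2(t)-z_2(t-\mathfrak{d}t)}{\mathfrak{d}t}$ in place of $\overline{z}_2=\dot z_2$ only perturbs $\hat d_2$ (and thus the error) by a bounded term of size $O(\mathfrak{d}t)$ by Taylor's theorem (cf. \eqref{eq:taylor} in Appendix \ref{sec:errorBound}), using the bounded-derivative assumptions in ($A1'$)(ii); crucially, this perturbation enters the error equation as a bounded exogenous forcing, not as a change of the state matrix $A_e$, so it does not affect stability.

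Second, I would invoke Theorem \ref{thm:stable} verbatim: once the error dynamics of ALISE are shown to be governed by the virtual system \eqref{eq:eqSys} with the Special Case matrices, and this virtual system is assumed to satisfy ($A2$)--($A5$) (uniform complete observability of $(A_e,C_2)$, uniform complete controllability with respect to the noise, boundedness of $\|Q_e\|$, $\|R_2\|$ from below and above, and boundedness of $\|A_e\|$), the classical Kalman-Bucy argument \cite[Theorem 4]{kalman.bucy.1961} gives: (a) the differential Riccati equation $\dot P^x = \overline{A}P^x + P^x\overline{A}^\top + \overline{Q} - LR_2L^\top$ has a unique bounded positive-definite solution to which every solution starting from $\mathcal{P}^x_0\succ 0$ converges as $t\to\infty$; and (b) the associated optimal filter error dynamics $\dot{\tilde{x}}_e=(A_e-LC_2)\tilde{x}_e$ is uniformly asymptotically stable. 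Because the only structural difference between ALISE and ELISE-in-Special-Case is the replacement of the (unavailable) measurement $\overline{y}$ by the finite-difference surrogate, and because the noise covariances $R_1,R_2,\overline{R}_2,\grave{R}_2,\grave{R}_{12}$ are now generated by the Gauss-Markov filters $\dot P^w$, $\dot P^v$ of Algorithm \ref{algorithm2} (bounded above and below under ($A1'$)(i), since $-A_w$, $\underline{A}_v$ are stable and time-invariant), all the hypotheses feeding into Theorem \ref{thm:stable} are in place. I would also note that the additional states $\theta$ in \eqref{eq:xhat2} are an algebraic reparametrization of $\hat x$ (since $\hat x = G_2M_2 z_2 - G_2 M_2 D_2 u + \theta$), so the implementable filter and the virtual error system have the same stability modulo this invertible change of coordinates — which is the whole point of the footnote on constructing a virtual system.

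The main obstacle I expect is bookkeeping, not conceptual: one must verify carefully that after substituting the Special Case matrices and the ALISE expressions for $\hat d_1$, $\hat d_2$, $\hat x$, the cross terms involving the derivative matrices $\dot C, \dot D, \dot G, \dot\Sigma, \dot T_1, \dot M_1, \dot M_2, \dot\Phi_1, \dot\Phi_2$ — all of which appear in the $\dot\theta$ update of Algorithm \ref{algorithm2} — collapse exactly so that the error $\tilde x = x - \hat x$ satisfies the clean virtual dynamics \eqref{eq:eqSys}. This is essentially the same computation performed in the proof of Theorem \ref{thm:stable} for ELISE, but here it is complicated by (i) the finite-difference term, which forces a Taylor-expansion estimate to isolate the $O(\mathfrak{d}t)$ residual, and (ii) the need to confirm that the time-varying transformation $T(t)$ and its derivative (via Theorem \ref{thm:Hdot} and Corollary \ref{cor:cor1}, in particular $T_2 H_1 = T_2\dot H_1 = 0$) do not introduce terms that destabilize or that violate boundedness. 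Once these identities are checked, the conclusion — uniform asymptotic stability of Algorithm \ref{algorithm2} and convergence of $\dot P^x$ to a unique $P^x$ — follows directly from Theorem \ref{thm:stable}. I would present the argument as: (1) derive the ALISE error equation and identify it with \eqref{eq:eqSys}-plus-$O(\mathfrak{d}t)$-forcing; (2) confirm ($A2$)--($A5$) transfer to this virtual system under ($A1'$); (3) apply \cite[Theorem 4]{kalman.bucy.1961} / Theorem \ref{thm:stable} to conclude, noting that bounded exogenous forcing of size $O(\mathfrak{d}t)$ does not affect the uniform asymptotic stability of a uniformly asymptotically stable linear time-varying system.
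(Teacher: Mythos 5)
Your proposal follows essentially the same route as the paper: identify the ALISE error dynamics with the `virtual' equivalent system \eqref{eq:eqSys} under Special Case \ref{sp} (after decorrelating $\overline{w}$ and $v_2$), then invoke the Kalman--Bucy stability theorem \cite[Theorem 4]{kalman.bucy.1961} under (A2)--(A5), which is exactly the paper's combined proof of Theorems \ref{thm:stable} and \ref{thm:stable2}. One small point: because the $\theta$-reparametrization in \eqref{eq:xhat2} propagates the state via the exact algebraic equivalent of \eqref{eq:xhat} (the finite difference appears only in the reported input estimate $\overline{d}_2$, never in $\dot{\theta}$), your $O(\mathfrak{d}t)$ exogenous-forcing step is unnecessary --- the state error dynamics coincide exactly, not approximately, with those of the virtual system.
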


Finally, for the time-invariant case, the conditions under which the algebraic Riccati equation of the filter has a unique stationary solution is given by:

\begin{thm}[Convergence to steady-state of ALISE] \label{thm:conv2}
Let ${\rm rk}(\overline{C}_{2} G_{2})=p-p_{H}$. Then, in the time-invariant case with $P^x(t_{0}) \succeq 0$, the filter in Algorithm \ref{algorithm2} (exponentially) converges to a unique stationary solution if and only if
(i) $({A}_e,C_2)$ is detectable, and 
(ii) $({A}_e,{Q}_e^{\frac{1}{2}})$ is stabilizable 
where matrices ${A}_e$ and ${Q}_e$ are as defined in Theorem \ref{thm:stable}.
\end{thm}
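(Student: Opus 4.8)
The plan is to recognize that Theorem~\ref{thm:conv2} is the ALISE counterpart of Theorem~\ref{thm:conv} and to prove it by the same virtual-system argument. First I would observe that, under Assumption~($A1'$) and Special Case~\ref{sp}, Theorem~\ref{thm:stable2} already furnishes the virtual equivalent system \eqref{eq:eqSys}, and that the differential Riccati equation for $P^x$ in Algorithm~\ref{algorithm2} is identical in form to the one in Algorithm~\ref{algorithm}: the finite-difference term $\frac{1}{\mathfrak{d}t}(z_2(t)-z_2(t-\mathfrak{d}t))$ enters only the input estimate $\overline{d}_2$ and the input covariance $P^{\overline{d}}$, never the state error covariance recursion. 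Hence the steady-state question reduces to the convergence, in the time-invariant case, of the matrix Riccati equation
\begin{align*}
\dot{P}^x = \overline{A} P^x + P^x \overline{A}^\top + \overline{Q} - L R_2 L^\top, \qquad L = (P^x C_2^\top - G_2 M_2 \grave{R}_2^\top) R_2^{-1},
\end{align*}
with constant matrices.

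Next I would put this equation into canonical Kalman--Bucy form. Substituting the expression for $L$ and completing the square in $P^x C_2^\top R_2^{-1} C_2 P^x$ absorbs the cross-correlation $\grave{R}_2$ into the drift and the noise intensity, yielding
\begin{align*}
\dot{P}^x = A_e P^x + P^x A_e^\top + Q_e - P^x C_2^\top R_2^{-1} C_2 P^x,
\end{align*}
with $A_e$ and $Q_e$ exactly as defined in Theorem~\ref{thm:stable}. One checks $Q_e \succeq 0$ (it is the Schur complement of the joint process/measurement noise covariance), so $Q_e^{1/2}$ is well defined, and $R_2 \succ 0$ by Assumption~($A1'$). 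This is precisely the Riccati equation of the Kalman--Bucy filter for the constant pair $(A_e, C_2)$ with process-noise intensity $Q_e$ and measurement-noise intensity $R_2$.

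I would then invoke the classical convergence theory for the time-invariant matrix Riccati differential equation (e.g., \cite{kalman.bucy.1961} with the standard stabilizability/detectability refinements): for constant data with $R_2 \succ 0$, every solution starting from $P^x(t_0) = \mathcal{P}^x_0 \succeq 0$ converges exponentially to the unique symmetric positive-semidefinite stabilizing solution of the associated algebraic Riccati equation if and only if $(A_e, C_2)$ is detectable and $(A_e, Q_e^{1/2})$ is stabilizable. Detectability makes the limit finite and renders the closed-loop matrix $A_e - P^x C_2^\top R_2^{-1} C_2$ Hurwitz, which delivers the exponential rate; stabilizability of $(A_e, Q_e^{1/2})$ forces uniqueness of the stationary solution. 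For necessity I would argue the contrapositive exactly as in the proof of Theorem~\ref{thm:conv}: failure of detectability produces an unstable unobservable mode along which $P^x$ diverges or the limit is non-unique, while failure of stabilizability of $(A_e, Q_e^{1/2})$ leaves a continuum of positive-semidefinite algebraic solutions differing on an uncontrollable stable subspace, so no unique stationary $P^x$ exists.

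The main obstacle is bookkeeping rather than conceptual. One must verify carefully that the $\mathfrak{d}t$-approximation in ALISE genuinely leaves the $P^x$-recursion of Algorithm~\ref{algorithm2} in the same form as that of Algorithm~\ref{algorithm}, so that the exact Riccati machinery applies with no residual $O(\mathfrak{d}t)$ perturbation of the variance equation; confirm that the whitening/completion-of-square step preserves $Q_e \succeq 0$ and $R_2 \succ 0$; and reconcile the paper's hypotheses (i)--(ii) with the precise conditions in the cited Riccati-convergence result, noting that in the time-invariant case the uniform complete observability/controllability of \eqref{eq:eqSys} required by Theorem~\ref{thm:stable2} are here relaxed to detectability and stabilizability. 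Beyond these checks, the argument is a direct transcription of the proof of Theorem~\ref{thm:conv}.
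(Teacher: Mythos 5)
Your proposal is correct and follows essentially the same route as the paper: reduce the ALISE variance equation to the time-invariant Riccati equation of the `virtual' equivalent system \eqref{eq:eqSys} (noting the finite-difference term does not enter $P^x$), absorb the noise cross-correlation to reach canonical Kalman--Bucy form with $(A_e, C_2, Q_e, R_2)$, and invoke the classical detectability/stabilizability conditions for exponential convergence to the unique stabilizing CARE solution. Your write-up simply spells out steps (completion of the square, $Q_e \succeq 0$, and the necessity direction) that the paper delegates to the cited Kalman--Bucy convergence results.
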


\begin{prop} \label{remark} For the Special Case \ref{sp}, a system property known as strong observability\footnote{That is, the condition under which the initial condition $x_0$ and the unknown input signal history, $d(\tau)$ for all $0 \leq \tau \leq t$ can be uniquely determined from the measured output history $y(\tau)$ for all $0 \leq \tau \leq t$ (see, e.g., \cite{Hautus.1983})} implies that the pair $({A}_e,C_2)$ is observable; and that $C_2$ and $G_2$ have full rank. A full-rank $G_2$ is a necessary condition for ${\textrm rank}(C_2 G_2)=p-p_H$, while $C_2$ with full rank is also necessary if $l=p$. Hence, strong observability is closely related to the fact that a minimum-variance unbiased estimator exists and admits a steady-state solution. A similar condition also holds for the optimal discrete-time filter in \cite{Yong.Zhu.ea.Automatica16}.
\end{prop}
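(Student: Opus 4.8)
The plan is to connect the linear-systems notion of \emph{strong observability} of the triple $(A, G, C, H)$ (here with the Special Case \ref{sp} matrices, so $\overline{H}=H$, $\overline{C}=C$, etc.) to the concrete structural objects appearing in the filter, namely $A_e$, $C_2$, $G_2$. First I would recall the Hautus-type characterization of strong observability: the system \eqref{eq:system} (ignoring noise and known inputs) is strongly observable if and only if the invariant zeros of the pencil $\bigl[\begin{smallmatrix} sI-A & -G \\ C & H\end{smallmatrix}\bigr]$ form an empty set, equivalently the only $(x_0, d(\cdot))$ consistent with $y\equiv 0$ is $x_0=0$, $d\equiv 0$. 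I would then trace through the decoupling transformation of Section \ref{sec:similar}: the output $z_1$ carries $d_1$ directly through $\Sigma$ (full rank), while $z_2 = C_2 x + D_2 u + v_2$ has no direct feedthrough; thus the ``remaining'' unknown input $d_2$ must be recovered via the derivative channel $\dot z_2$, which after substituting $\dot x$ produces the matrix $C_2 G_2$ multiplying $d_2$. Strong observability, combined with this decoupling, forces $C_2 G_2$ to have full column rank $p - p_H$: if it were rank-deficient, one could construct a nonzero $d_2$ (hence nonzero $d$) that is invisible in $z_1$, $z_2$ and $\dot z_2$ and, by an inductive argument on higher derivatives, invisible in all of $y$, contradicting strong observability. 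Full column rank of $C_2 G_2$ in turn requires $G_2$ to have full column rank (necessary for $\mathrm{rank}(C_2 G_2) = p - p_H$) and, when $l = p$ so that $C_2$ is square, $C_2$ to be nonsingular.

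Next I would establish observability of $(A_e, C_2)$. Recall $A_e = \overline{A} - G_2 M_2 \grave{R}_2^\top R_2^{-1}$ with $\overline{A} = (I - G_2 M_2 C_2)\hat{A} - G_2 M_2 T_2 \dot{C}$ and $\hat{A} = A - G_1 M_1 C_1$; the term $G_2 M_2 \grave{R}_2^\top R_2^{-1}$ is a noise-cross-correlation output-injection term of the form $K C_2$ (since $R_2$ is the autocovariance of $v_2 = z_2$'s noise), so it does not change observability of the pair with output $C_2$. Likewise the $-G_1 M_1 C_1$ correction inside $\hat{A}$ and the $-G_2 M_2(C_2 \hat{A} + T_2\dot{C})$ projection are themselves built from outputs $z_1$ (known once $\hat d_1$ is formed via $M_1 = \Sigma^{-1}$) and $z_2$, so modulo output injection they do not destroy observability either. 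Therefore $(A_e, C_2)$ is observable iff $(A, C_2)$ augmented with the $z_1$-channel is observable. I would then argue that the full state $x$ is reconstructible from $z_2$ and its derivatives together with $z_1$ precisely because strong observability says $x_0$ is determined by the output history; the subspace not observed through $C_2$ alone is exactly spanned by directions excited by the unknown input and corrected through the $G_1 M_1 C_1$ / $G_2 M_2$ feedback, which closes the observability gap. This is the Hautus/PBH-style computation: $\mathrm{rank}\,[\lambda I - A_e;\ C_2] = n$ for all $\lambda$, shown by reducing the PBH matrix, via the unitary transformation $[V_1\ V_2]$ and $[U_1\ U_2]$ and the identities of Corollary \ref{cor:cor1}, to the strong-observability pencil of the original system.

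The main obstacle I expect is the careful bookkeeping in the reduction of the PBH matrix for $(A_e, C_2)$ back to the invariant-zero pencil of $(A,G,C,H)$: the filter's $A_e$ is a fairly elaborate closed-loop matrix (it already absorbs the $d_1$-estimate feedback through $M_1 = \Sigma^{-1}$, the algebraic elimination of $d_2$ through $M_2$ and $C_2 G_2$, and the noise-decorrelation shift), so showing that all these modifications amount only to output injection by $C_2$ (plus the always-available $z_1$ information) requires tracking each term and verifying it has the form $(\text{matrix})\cdot C_2$ or $(\text{matrix})\cdot C_1 = (\text{matrix})\cdot\Sigma^{-1}(z_1\text{-channel})$. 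Once that is in place, the implication ``strong observability $\Rightarrow$ $(A_e,C_2)$ observable'' follows, and the rank statements on $C_2$ and $G_2$ follow from the structural necessity already noted; the concluding sentence relating this to existence of the MVU estimator and its steady state is then just an appeal to Theorems \ref{thm:main}, \ref{thm:stable} and \ref{thm:conv} (observability being A2 and detectability being hypothesis (i) there), and to the discrete-time analogue in \cite{Yong.Zhu.ea.Automatica16}. I would keep the proof at the level of these structural reductions rather than expanding the block-matrix algebra in full.
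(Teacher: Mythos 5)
Your starting point --- the Hautus rank characterization of strong observability via the pencil $\bigl[\begin{smallmatrix} sI-A & -G \\ C & H\end{smallmatrix}\bigr]$ and a reduction of that pencil through the decoupling transformation --- is exactly the paper's route: the paper's proof is a chain of rank-preserving multiplications by $\mathrm{diag}(I,T)$, $\mathrm{diag}(I,V)$, an elementary row operation with $G_1\Sigma^{-1}$ (turning $A$ into $\hat A$ and absorbing the $\Sigma$ block as $+p_H$), and a column operation with $-M_2 C_2\hat A$ (turning $\hat A$ into $\overline A$), ending with $\mathrm{rk}\bigl[\begin{smallmatrix} sI-\overline A & -G_2 \\ C_2 & 0\end{smallmatrix}\bigr]=n+p-p_H$ for all $s$, from which PBH observability of the pair and full column rank of $G_2$ are read off directly. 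Your sketch of the $(A_e,C_2)$ part is vaguer than this but heads in the same direction.

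However, there is a genuine error in your rank argument: you claim that strong observability ``forces $C_2G_2$ to have full column rank $p-p_H$.'' That is false, and the proposition deliberately does not assert it. Counterexample (take $H=0$, so $p_H=0$, $C_2=C$, $G_2=G$): $A=\bigl[\begin{smallmatrix}0&1\\0&0\end{smallmatrix}\bigr]$, $G=\bigl[\begin{smallmatrix}0\\1\end{smallmatrix}\bigr]$, $C=[1\ \ 0]$. Here $y=x_1$, $\dot y=x_2$, $\ddot y=d$, so the system is strongly observable, yet $CG=0$. Your proposed ``inductive argument on higher derivatives'' breaks precisely here: a $d_2$ invisible in $z_2$ and $\dot z_2$ can still appear in $\ddot z_2$ or later derivatives (a relative-degree phenomenon), so invisibility does not propagate. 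The paper's logic runs in the opposite direction: strong observability yields the \emph{necessary} conditions ($G_2$ full column rank; $C_2$ full rank when $l=p$, since then $C_2G_2$ is square of size $p-p_H$), and separately $\mathrm{rk}(C_2G_2)=p-p_H$ also requires these; this is why the conclusion is only that strong observability is ``closely related to'' --- not that it implies --- existence of the MVU estimator. If you drop the $C_2G_2$ claim and replace the derivative-counting argument with the explicit rank chain on the pencil, your proof aligns with the paper's.
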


\section{Separation Principle \& Disturbance Rejection}\label{sec:sepPrinciple}

We now investigate the stability of the closed-loop system, when the controller is a state feedback controller with disturbance rejection terms, where the true state and unknown input are replaced by their estimated values (cf. previous section): 
\begin{align} \label{eq:control}
u=-K \hat{x} - J_1 \hat{d}_1 - J_2 \hat{d}_2,
\end{align} \noindent
where $K$ is the state feedback gain, while $J_1$ and $J_2$ are the disturbance rejection gains. 

The following theorem shows that there also exists a separation principle for linear stochastic systems with unknown inputs, i.e., the designs of the state and input feedback controller and estimator can be carried out independently.
\begin{thm}(Separation Principle) \label{thm:separation}
The state feedback controller gain $K$ in \eqref{eq:control} can be designed independently of the state and input estimator gains $L$, $M_1$ and $M_2$ in ELISE and ALISE (cf. Algorithms \ref{algorithm} and \ref{algorithm2}).
\end{thm}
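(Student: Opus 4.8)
The plan is to show that, in suitable coordinates, the closed-loop dynamics are block triangular, with one diagonal block depending only on the controller gain $K$ and the other only on the filter gains $L$, $M_1$, $M_2$. First I would introduce the estimation errors $\tilde{x}:=x-\hat{x}$, $\tilde{d}_1:=d_1-\hat{d}_1$, $\tilde{d}_2:=d_2-\hat{d}_2$. Substituting the control law \eqref{eq:control}, together with $\hat{x}=x-\tilde{x}$, $\hat{d}_i=d_i-\tilde{d}_i$ and $Gd=G_1d_1+G_2d_2$, into the plant \eqref{eq:system} yields
\begin{align*}
\dot{x}=(A-BK)x+BK\tilde{x}+(G_1-BJ_1)d_1+(G_2-BJ_2)d_2+BJ_1\tilde{d}_1+BJ_2\tilde{d}_2+Ww ,
\end{align*}
so the $x$-subsystem carries the closed-loop controller matrix $A-BK$ and is otherwise forced by the estimation errors and the exogenous signals.

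Next I would verify that the error dynamics are autonomous in $(\tilde{x},\tilde{d}_1,\tilde{d}_2)$ and the noise, i.e. do not depend on $u$ (hence not on $K$, $J_1$, $J_2$). For ELISE, subtracting \eqref{eq:xhat} from $\dot{x}$ and using $z_2-C_2\hat{x}-D_2u=C_2\tilde{x}+v_2$ (from \eqref{eq:sys}) cancels every $u$-term, giving $\dot{\tilde{x}}=(A-LC_2)\tilde{x}+G_1\tilde{d}_1+G_2\tilde{d}_2+Ww-Lv_2$; likewise \eqref{eq:dhat1} with $z_1-C_1\hat{x}-D_1u=C_1\tilde{x}+\Sigma d_1+v_1$ and $M_1=\Sigma^{-1}$ gives $\tilde{d}_1=-M_1C_1\tilde{x}-M_1v_1$, and the analogous manipulation of \eqref{eq:variant1}, together with $M_2\overline{C}_2G_2=I$, expresses $\tilde{d}_2$ as a linear function of $\tilde{x}$, $\tilde{d}_1$ and the noise only. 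Eliminating $\tilde{d}_1,\tilde{d}_2$ leaves $\dot{\tilde{x}}=\breve{A}\tilde{x}+(\text{noise})$ with $\breve{A}=\overline{A}-LC_2$ as in Lemma \ref{lem:convergence}, which depends on the filter gains but not on $K,J_1,J_2$. For ALISE the same accounting is applied to the dynamics of the auxiliary state $\theta$, and hence of $\hat{x}$, in \eqref{eq:xhat2} and Algorithm \ref{algorithm2}, with $\breve{A}$ replaced by the corresponding ALISE error matrix of Lemma \ref{lem:convergence2}.

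Assembling the two subsystems in the coordinates $(x,\tilde{x})$ then gives
\begin{align*}
\begin{bmatrix}\dot{x}\\ \dot{\tilde{x}}\end{bmatrix}=\begin{bmatrix}A-BK & \star\\ 0 & \breve{A}\end{bmatrix}\begin{bmatrix}x\\ \tilde{x}\end{bmatrix}+(\text{exogenous/noise terms}),
\end{align*}
where the lower-left block vanishes precisely because $\dot{\tilde{x}}$ is independent of $x$, and $\star$ is a matrix built from $K,J_1,J_2$ and the filter quantities. The transition matrix of the closed loop is therefore block upper triangular, so its uniform asymptotic stability is equivalent to that of $A-BK$ together with that of $\breve{A}$; since these blocks are governed by disjoint sets of gains, $K$ can be assigned to stabilize $(A,B)$ and $L$ (via the Riccati equation of Algorithm \ref{algorithm} or \ref{algorithm2}), along with $M_1,M_2$, to render ELISE/ALISE uniformly asymptotically stable under the hypotheses of Theorem \ref{thm:stable}/\ref{thm:stable2}, entirely independently. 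The gains $J_1,J_2$ enter only the off-diagonal block $\star$ and the forcing term, so they do not affect stability and may be chosen freely (e.g. to cancel $G_1,G_2$). The main obstacle I anticipate is the ALISE bookkeeping: because $\hat{x}$ is reconstructed through $\theta$ and a finite-difference surrogate for $\dot{y}$, one must check carefully that all $u$-dependent terms still cancel in the $\tilde{x}$-dynamics (up to the $O(\mathfrak{d}t)$ term already quantified in Lemma \ref{lem:convergenceInput2}), so that the block-triangular structure — and with it the separation of the two designs — is preserved.
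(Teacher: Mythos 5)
Your proposal is correct and follows essentially the same route as the paper: substitute the control law into the plant, verify that the estimation-error dynamics \eqref{eq:xtilde}, \eqref{eq:dtilde} are independent of $u$ (hence of $K$, $J_1$, $J_2$), and read off the block upper-triangular closed-loop matrix in the coordinates $(x,\tilde{x})$, whose spectrum factors as $\det(\lambda I-A+BK)\det(\lambda I-\overline{A}+LC_2)=0$. The only cosmetic difference is that you make the cancellation of the $u$-terms and the ALISE bookkeeping via $\theta$ explicit, which the paper leaves implicit by citing its earlier error-dynamics equations.
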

\begin{proof}
Substituting \eqref{eq:control} into \eqref{eq:sys} and from \eqref{eq:xtilde}, \eqref{eq:dtilde} and \eqref{eq:d} (for ALISE, with the matrices for Special Case \ref{sp}), we have 

\vspace{-0.3cm}
{\small 
\begin{align*} 
\begin{array}{ll} \begin{bmatrix} \dot{x} \\ \dot{\tilde{x}} \end{bmatrix} &= \begin{bmatrix} A-BK & B(K-J_1 M_1 C_1   -J_2 M_2 (\overline{C}_2 \hat{A}+\overline{T}_2 \doverline{C})) \\ 0 & \overline{A}-L C_2 \end{bmatrix} \begin{bmatrix} {x} \\ {\tilde{x}} \end{bmatrix}   \\&+ \begin{bmatrix} G_1-BJ_1 & G_2-BJ_2 \\ 0 & 0 \end{bmatrix} \begin{bmatrix} d_1 \\ d_2 \end{bmatrix} \\
&  + \begin{bmatrix}  (I -BJ_2 M_2 \overline{C}_2 ) W  & 0 &  \begin{array}{c} BJ_2 M_2 \overline{C}_2 G_1 M_1\\ -BJ_1 M_1\end{array}  & 0 & \hspace{-0.1cm}-BJ_2 M_2 \\ 0 & I & 0 & -L & 0 \end{bmatrix} \mathbf{w},\end{array}
\end{align*}}\normalsize \noindent
where $\mathbf{w}:=\begin{bmatrix} w^\top & \overline{w}^\top & v_1^\top & v_2^\top & \overline{v}_2^\top \end{bmatrix}^\top$.  Since the state matrix has a block diagonal structure, their eigenvalues are given by
\begin{align*}
\det(\lambda I-A +BK) \det(\lambda I-\overline{A} +LC_2) = 0.
\end{align*} \noindent
It can thus be seen that the eigenvalues of the controller and estimator
are independent of each other. 
\end{proof}

Hence, the state feedback gain, $K$, can be independently designed 
 (e.g., with Linear Quadratic Regulator (LQR)) with no effect on the stability of the estimator (ELISE or ALISE). 
Moreover, $J_1$ and $J_2$ can be chosen such that the effect of disturbance input on the closed loop system is reduced. For instance, we can minimize the induced 2-norms of $G_1-B J_1$ and $G_2-B J_2$, which are semidefinite programs\footnote{Semidefinite programs are convex optimization problems for which software packages, e.g. CVX \cite{cvx,Grant.08}, 
are available.} ($i=1,2$): 
\begin{align*}
{\rm minimize\quad  } &\gamma_i \\
{\rm subject \ to \ }  &\begin{bmatrix} \gamma_i I & G_i-B J_i \\  (G_i-B J_i)^\top & \gamma_i I \end{bmatrix} \succeq 0. 
\end{align*} 

In addition, $J_1$ and $J_2$ must also be chosen so that $u$, $\hat{d}_1$ and $\hat{d}_2$ can be uniquely determined. First, $\hat{d}_1$ and $\hat{d}_2$ become implicit equations; thus, the choices of $J_1$ and $J_2$ must be such that $\tilde{J}:=\begin{bmatrix} I-M_1 D_1 J_1 & -M_1 D_1 J_2 \\ M_2 (\overline{C}_2 G_1 -(\overline{C}_2 B + \overline{T}_2 \doverline{D})J_1) & I-M_2 (\overline{C}_2 B+\overline{T}_2 \doverline{D}) J_2 \end{bmatrix}$ is invertible. The explicit expressions for $\hat{d}_1$ and $\hat{d}_2$ in ELISE (Algorithm \ref{algorithm}) with 
$\tilde{J}^{-1}:=\begin{bmatrix} \tilde{J}^\circ_{11} & \tilde{J}^\circ_{12} \\ \tilde{J}^\circ_{21} & \tilde{J}^\circ_{22} \end{bmatrix}$ are

{
\begin{align} 
\label{eq:dexp}
\begin{array}{ll}
\hat{d}_1&=\tilde{J}^\circ_{11} M_1 z_1+\tilde{J}^\circ_{12}M_2 \overline{z}_2 - [\tilde{J}^\circ_{11} (C_1-D_1K)+\tilde{J}^\circ_{12}(\overline{C}_2 A\\
& \quad +\overline{T}_2 \doverline{C} - (\overline{C}_2  B + \overline{T}_2 \doverline{D})K)] \hat{x}- \tilde{J}^\circ_{12} M_2 \overline{D}_2 \dot{u},\\
\hat{d}_2&=\tilde{J}^\circ_{21} M_1 z_1+\tilde{J}^\circ_{22}M_2 \overline{z}_2 - [\tilde{J}^\circ_{21} (C_1-D_1K)+\tilde{J}^\circ_{22}(\overline{C}_2 A\\
& \quad +\overline{T}_2 \doverline{C} - (\overline{C}_2  B + \overline{T}_2 \doverline{D})K)] \hat{x}- \tilde{J}^\circ_{22} M_2 \overline{D}_2 \dot{u}. \vspace{-0.45cm}
\end{array}
\end{align}\normalsize
Substituting \eqref{eq:dexp} back into \eqref{eq:control}, we obtain
\begin{align} \label{eq:controlexp}
\hspace{-0.2cm}\begin{array}{rl}
u&=[(J_1\tilde{J}^\circ_{11}+J_2\tilde{J}^\circ_{21}) (C_1-D_1K)+(J_1 \tilde{J}^\circ_{12}+J_2\tilde{J}^\circ_{22})\\
&\ (\overline{C}_2 A +\overline{T}_2 \doverline{C} - (\overline{C}_2  B + \overline{T}_2 \doverline{D})K) -K] \hat{x}\\ &
\ -(J_1\tilde{J}^\circ_{11}+J_2\tilde{J}^\circ_{21})M_1 z_1 -(J_1 \tilde{J}^\circ_{12}+J_2\tilde{J}^\circ_{22})M_2 \overline{z}_2\\
&\ + (J_1 \tilde{J}^\circ_{12}+J_2\tilde{J}^\circ_{22})M_2 \overline{D}_2 \dot{u},
\end{array}
\end{align} \noindent
which is an ordinary differential equation for $u$ if $ (J_1 \tilde{J}^\circ_{12}+J_2\tilde{J}^\circ_{22})M_2 \overline{D}_2$ is invertible. Notice that if $\overline{D}_2=0$, then $u$, $\hat{d}_1$ and $\hat{d}_2$ can be directly obtained. Otherwise, we need $\begin{bmatrix} J_1 & J_2 \end{bmatrix}$ to have full column rank (hence at least as many control inputs as disturbance inputs , i.e., $m \geq p$) such that $\hat{d}_1$ and $\hat{d}_2$ can be uniquely determined by
$\begin{bmatrix} \hat{d}_1 \\ \hat{d}_2 \end{bmatrix} =\begin{bmatrix} J_2 & J_2 \end{bmatrix}^\dagger (-u-K \hat{x})$. 
To extend the above explicit equations for $\hat{d}_1$, $\hat{d}_2$ and $u$ in \eqref{eq:dexp} and \eqref{eq:controlexp} to the ALISE algorithm, we use the matrices of the Special Case \ref{sp} and substitute $\overline{z}_2$ with $\frac{z_2(t)-z_2(t-\mathfrak{d}t)}{\mathfrak{d}t}$, as well as $\hat{x}$ with  the estimator state $\theta$ given in \eqref{eq:xhat2}. 

Finally, note that if the system in \eqref{eq:system} fulfills a \emph{matching condition}\footnote{The matching condition assumption is common for disturbance rejection in the sliding mode and adaptive control literature.}, i.e. , 
$\exists J$ such that $u=Jd$ and $BJd=Gd$, the above minimization procedure will exactly cancel out the disturbance input.

\section{Illustrative Examples} \label{sec:examples}

To illustrate the effectiveness of the proposed filters, we consider two examples. The first is a nonlinear vehicle reentry problem that demonstrates that our formulation is suitable for linearization-based nonlinear filtering, and the latter example of helicopter hover control allows us to discuss the performance of our filters in the absence of linearization effects.

\subsection{Nonlinear Vehicle Reentry Problem} \label{sec:reentry}

We first consider an example with a vehicle that enters the atmosphere at high altitude and a very high speed, with 
nonlinear vehicle dynamics (based on  \cite{Julier.2004}, cf. Fig. \ref{fig:spaceX}):
\begin{align} \label{eq:example}
\hspace{-0.5cm}\begin{array}{ll}
\dot{x}_1 (t) &= x_3 (t),\\
\dot{x}_2 (t) &= x_4 (t),\\
\dot{x}_3 (t) &= \mathcal{D}(t) x_3(t) + \mathcal{G}(t) x_1 (t) +u_1(t) + w_1(t),\\
\dot{x}_4 (t) &= \mathcal{D}(t) x_4(t) + \mathcal{G}(t) x_2 (t) +u_2(t) + d^w_1 (t) + w_2(t),\\
\dot{x}_5 (t) &= 0,
\end{array}\hspace{-0.55cm}
\end{align}
where  $x_1(t)$ and $x_3(t)$ are the vertical position and velocity of the body, $x_2(t)$ and $x_4(t)$ are the horizontal position and velocity and $x_5(t)$ is
an unknown aerodynamic parameter of the vehicle. $d^w_1(t)$ denotes horizontal disturbance crosswinds that we assume is unknown, whereas $w(t):=[w_1(t), w_2(t)]^\top$ is the process noise. 
The drag-related force term, $\mathcal{D}(t)$, and the gravity-related force term, $\mathcal{G}(t)$, are given by
\begin{align*}
\begin{array}{rl}
\mathcal{D}(t) &= -\beta_0 e^{x_5(t)} e^{\frac{R_0-\sqrt{x_1(t)^2+x_2(t)^2}}{H_0}} \sqrt{x_3(t)^2+x_4(t)^2},\\
\mathcal{G}(t)&=-\frac{G m_0}{(\sqrt{x_1(t)^2+x_2(t)^2})^3},
\end{array}
\end{align*}
with $\beta_0=-0.59783$, $H_0=13.406$, $Gm_0=3.986\times10^5$ and $R_0=6374$.
The motion of the vehicle is measured by a radar that is located at $(x_r,y_r)$. It is able to measure range, bearing and range rate
\begin{align*}
\begin{array}{rl}
y_1(t)&=\sqrt{(x_1(t)-x_r)^2+(x_2(t)-y_r)^2} +d^e_2(t) +v_1 (t)\\
& \hspace{-0.1cm}:=h_1(t) +d^e_2(t) +v_1 (t),\\
y_2(t)&=\arctan \bigg(\frac{x_2(t)-y_r}{x_1(t)-z_r}\bigg) +v_2(t),\\
y_3(t)&=\frac{(x_1(t)-z_r) x_3(t) + (x_2(t)-y_r) x_4(t)}{\sqrt{(x_1(t)-x_r)^2+(x_2(t)-y_r)^2}}+v_3(t)\\
&=\dot{h}_1(t)+v_3(t),
\end{array}
\end{align*} \noindent
where $d^e_2(t)$ denotes an unknown measurement error/fault, whereas $v(t):=[v_1(t),  v_2(t), v_3(t)]^\top$ is the measurement noise. 
Since both the system dynamics and measurements are nonlinear, we consider their linearization about a given reference trajectory to obtain a time-varying linear system. 
In this example, the chosen reference trajectory consists of polynomials $x_1(t)=\sum_{i=0}^3 a_i t^i$ and $x_2(t)=\sum_{i=0}^3 b_i t^i$ and $x_5(t)=c_0$, where the coefficients are chosen to bring the vehicle from the initial reference state $x_r(0)=[6500.4,-1.8093,349.14,-6.7967,0.7]^\top$ to the final state $x_r(t_f)=[6400,-0.5,150,-0.5,0.7]^\top$ in $t_f=200s$.

\begin{figure}[!ttp]
\begin{center}
\includegraphics[scale=0.125]{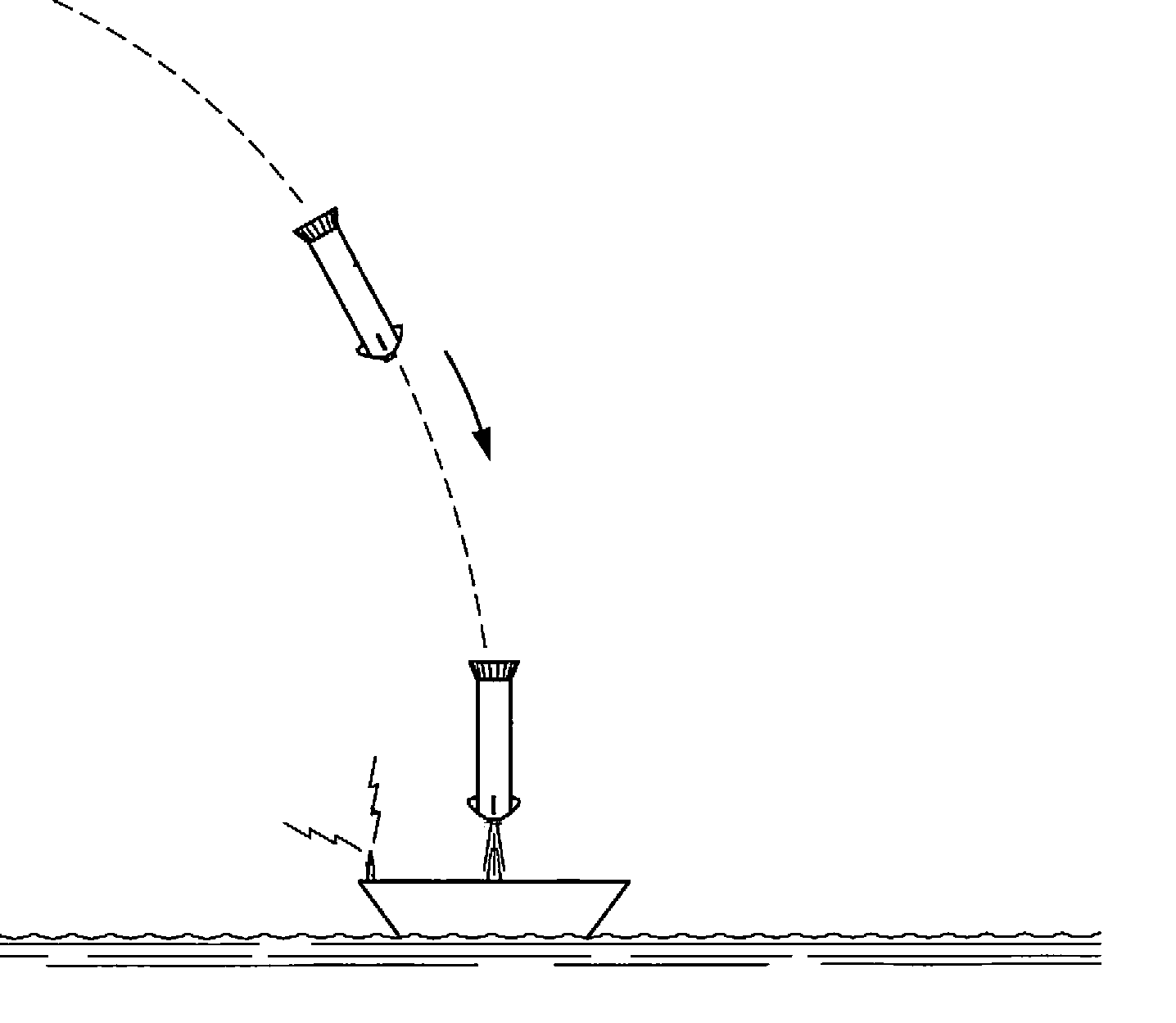}
\vspace{-0.25cm}
\caption{Vehicle reentry onto a sea landing platform. \label{fig:spaceX}}
\end{center}
\end{figure}

For the two variants of the optimal state and input estimator proposed in this paper, we assume: 

\noindent {($A1$)} \emph{ELISE}:
The process noise $w$ and the measurement noise $v$ are assumed to be mutually uncorrelated, zero-mean, white random signals with known covariance matrices, with noise statistics $Q={\rm diag}(5\times 10^{-4},  10^{-4})$ and $R={\rm diag}(10^{-5}, 10^{-4},10^{-5})$.  An additional measurement of range acceleration $\overline{y}(t)=\dot{h}_1(t)+\overline{v}(t)$, is available\footnote{Although range acceleration measurement may be accessible with the use of an accelerometer, we used the \emph{filtered} derivative of $y_3$, i.e., $\overline{y}(s)=\frac{s}{0.05s+1} y_3(s)$ ($s$ is the Laplace variable), as the additional measurement to illustrate the possibility of using such an approach with ELISE.} with $\overline{R}=0.75$ and $\grave{R}=[0.0866, 0, 0.0274]^\top$. 

\noindent {($A1'$)}  \emph{ALISE}: The noise signals 
are Gauss-Markov processes:
$\dot{w}+0.2 I_3 w = w_G,\
\ddot{v}+ I_3 \dot{v} + 0.25 I_3 v =v_G$,
where $w_G$ and $v_G$ are mutually uncorrelated, zero-mean, white noise signals with intensities $Q_G ={\rm diag}(5 \times 10^{-4}, 10^{-4}, 2 \times 10^{-4})$ and $R_G ={\rm diag}(5 \times 10^{-3}, 10^{-4}, 10^{-5})$, respectively.

\begin{figure}[!tp]
\begin{center}
\includegraphics[scale=0.35,trim=20mm 5mm 5mm 7mm,clip]{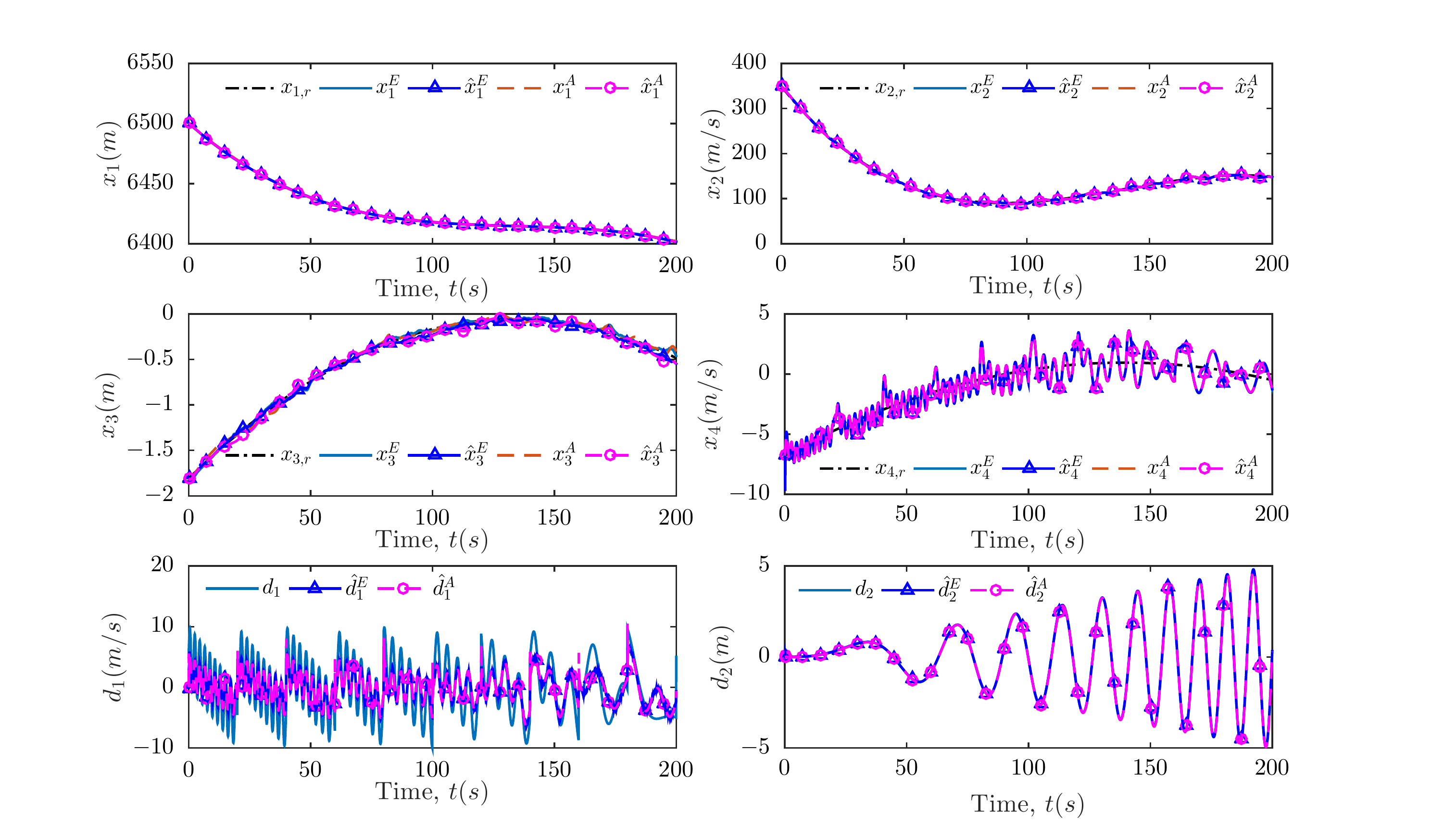}\vspace*{-0.5cm}
\caption{Actual states $x_1$, $\cdots$, $x_4$ and its estimates $\hat{x}_1$, $\cdots$, $\hat{x}_4$; unknown inputs $d^w_1$, $d^e_2$, and its estimates $\hat{d}_1$, $\hat{d}_2$; Superscripts $E$ and $A$ denote ELISE and ALISE, respectively (averaged over 100 simulations). \label{fig:v1}}
\end{center}
\end{figure}

\begin{figure}[!tp]
\begin{center}
\includegraphics[scale=0.355,trim=22.5mm 2mm 5mm 10mm,clip]{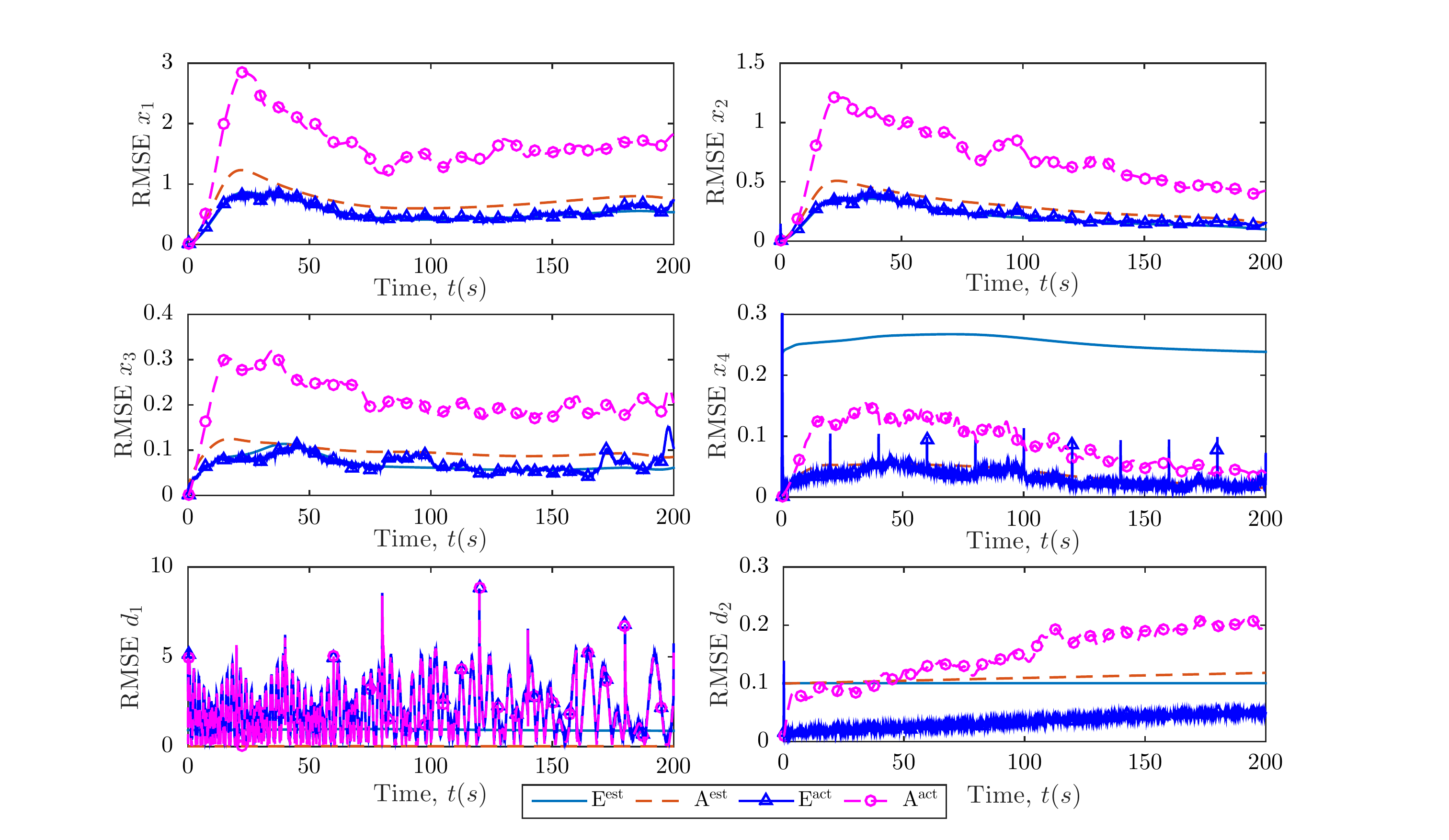}\vspace*{-0.5cm}
\caption{Root mean squared errors (RMSE) of state $x_1$ through $x_4$ and unknown input estimates $d^w_1$ and $d^e_2$ computed from 100 simulations; $\mathrm{E}$ and $\mathrm{A}$ denote ELISE and ALISE, respectively;  Superscripts $\mathrm{est}$ and $\mathrm{meas}$ denote estimated (i.e., from $P^x$) and measured/actual RMSE values. \label{fig:v2}}
\end{center}
\end{figure}

Since we have a separation principle for the controller and estimator (Theorem \ref{thm:separation}), we can design them independently. The controller for this example is chosen as
\begin{align*}
u_1&=u_{1,r}-\hat{\mathcal{D}}\hat{x}_3-\hat{\mathcal{G}}\hat{x}_1-k_D(\hat{x}_3-x_{3,r})-k_P(\hat{x}_1-x_{1,r}),\\ 
u_2&=u_{2,r}-\hat{\mathcal{D}}\hat{x}_4-\hat{\mathcal{G}}\hat{x}_2-k_D(\hat{x}_4-x_{4,r})-k_P(\hat{x}_2-x_{2,r}),
\end{align*}
where $u_{1,r}$ and $u_{2,r}$ are the reference inputs corresponding to the reference trajectory, $\hat{\mathcal{D}}$ and $\hat{\mathcal{G}}$ are estimates of $\mathcal{D}$ and $\mathcal{G}$, while $k_D=1.8$ and $k_P=1$ are controller gains (chosen via pole placement at $-0.9000 \pm 0.4359i$).
Note that the system \eqref{eq:example} in this example becomes unstable when only the reference input is applied; thus, the stabilizing controller above is necessary. 

For disturbance rejection, we chose $J_1=[0,1]^\top$ and
$J_2=[0,0]^\top$, since we observe that the \emph{matching condition} (cf. Section \ref{sec:sepPrinciple}) holds.  
For the ALISE variant, 
$\mathfrak{d}t$ is chosen as $0.05s$. We implemented the above state feedback control law and both filter variants described above in MATLAB/Simulink on a 2.2 GHz Intel Core i7 CPU, with initial states $x(0)=[6500.4,349.14,-1.8093,-6.7967,0.6932]^\top$ and non-periodic and non-smooth unknown inputs depicted in Fig. \ref{fig:v1} (e.g., $d^w_1$ is composed of sawtooth and chirp signals).

 Fig. 
\ref{fig:v1} shows the actual and estimated system states $x_1$ through $x_4$, as well as unknown inputs $d^w_1$ and $d^e_2$, averaged over 100 Monte Carlo simulations. We observe that both proposed filters, ELISE and ALISE, estimate these system states and unknown inputs reasonably well. On the other hand, we see from Fig. \ref{fig:v2} that the estimated root mean squared errors (RMSE) are, with the exceptions of $x_4$ and $d^e_2$, higher than the actual/measured RMSE values. The RMSE of ALISE also appears higher than that of ELISE. 
These discrepancies may be due to approximations associated with the use of linearized dynamics. Note that the state $x_5$ (not depicted due to space constraints), which we recall to be the unknown aerodynamic parameter, is not as well estimated with our filters. However, this is not a problem, as the main objective of the vehicle reentry problem is the tracking of the reference trajectory, which is demonstrated to be successful with our filters. 

Moreover, it is noteworthy that ALISE performs reasonably well, despite the fact that $\alpha_2$ in \eqref{eq:alpha23} is unbounded because of the unboundedness of $\dot{d}^w_1$ (due to its sawtooth component). This suggests that the supremum in $\alpha_2$ may be taken over the set with nonzero measure only.

\subsection{Hover Control of a Helicopter} \label{sec:hover}

\begin{figure}[!htp]
\begin{center}
\includegraphics[scale=0.2]{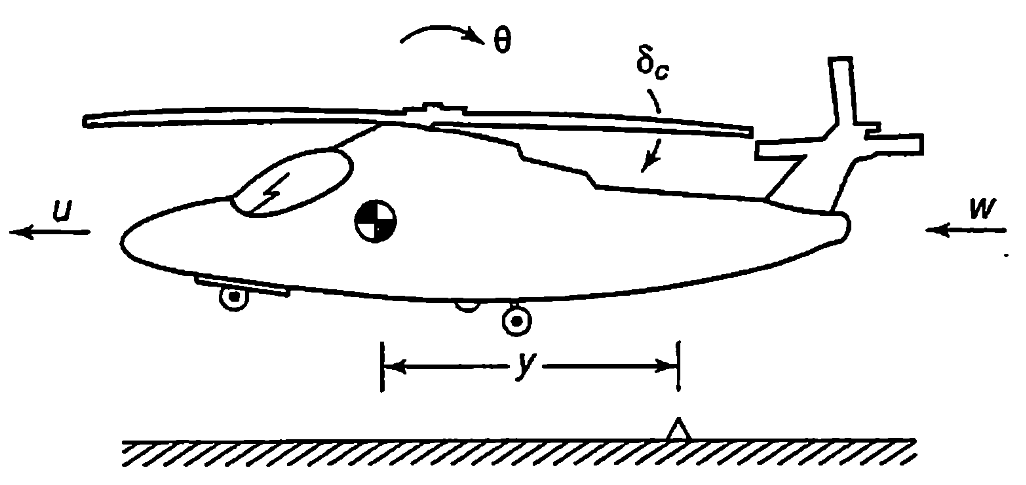}
\vspace{-0.25cm}
\caption{A helicopter near hover \cite{bryson.2002}. \label{fig:helicopter}}
\end{center}
\end{figure}

Next, we consider an example with a helicopter depicted in Fig. \ref{fig:helicopter} with the following longitudinal dynamics \cite{bryson.2002}:
\begin{align} \label{eq:example}
\begin{array}{ll}
\dot{\theta}&= q,\
\dot{q}=-0.415 q-0.011 u +6.27 \delta_c - 0.011 w_h,\\
\dot{y}&=u, \ \dot{u} =9.8 \theta -1.43 q -0.0198 u +9.8 \delta_c -0.0198 w_h,
\end{array}
\end{align} \noindent
where the system states, $\mathbf{x}:=\begin{bmatrix} \theta & q & u & y\end{bmatrix}^\top$, are the fuselage pitch angle $\theta$, the pitch rate $\dot{q}$, the horizontal velocity of the center of gravity $u$ and the horizontal distance from the desired hover point $y$; while the only control input is the tilt angle of the rotor thrust vector $\delta_c$. The variable $w_h=w_d+w$ represents a horizontal wind disturbance, which consists of a deterministic time-varying component $w_d$ and  a stochastic component $w$. We have noisy measurements of $y$, $u$ and $q$ only, with a time-varying output matrix given by
{\small$ C= \begin{bmatrix} 0 & 0 & 0 & 1 \\ 0 & 0 & 0.8 + 0.2 \sin t & 0 \\ 0 & 1 & 0 & 0 \end{bmatrix}.$}
Moreover, the measurement of $u$ is plagued by an additive time-varying bias, $e_m$. Thus, the measurement vector is given by $z=\begin{bmatrix} y\ & (0.8 + 0.2 \sin t) u + e_m  & \ q\end{bmatrix}^\top$. In this example, $w_d$ and $e_m$ are sawtooth and sinusoidal signals, respectively.

For the two variants of the optimal state and input estimator proposed in this paper, we assume: 

\noindent {($A1$)} \emph{ELISE}:
The process noise $w$ and the measurement noise $v$ are assumed to be mutually uncorrelated, zero-mean, white random signals with known covariance matrices, with noise statistics $Q=5 \times 10^{-4}$ and $R={\rm diag}(1 \times 10^{-3}, 1.6 \times 10^{-3},0.9 \times 10^{-3})$.  An additional measurement of linear acceleration (e.g., from an accelerometer),  $\overline{y}=\dot{u}$, is available with $\overline{R}=2 \times 10^{-3}$.

\noindent {($A1'$)}  \emph{ALISE}: The process noise, $w$ is a first-order Gauss-Markov process, and the measurement noise, $v$, is a second-order Gauss-Markov process:
$\dot{w}+0.2 w = 6 w_G,\
\ddot{v}+ \dot{v} + 0.25 v =v_G$,
where $w_G$ and $v_G$ are mutually uncorrelated, zero-mean, white noise signals with intensities $Q_G =5 \times 10^{-4}$ and $R_G ={\rm diag}(1 \times 10^{-3}, 1.6 \times 10^{-3},0.9 \times 10^{-3})$, respectively.

Note that the system \eqref{eq:example} in this example is open-loop unstable; thus, a stabilizing controller is necessary. Since we have a separation principle for the controller and estimator (Section \ref{sec:sepPrinciple}), we can design them independently. The controller we chose is the LQR, and the estimator is the filter proposed in this paper. For the LQR, we have chosen the following cost matrices: $Q_{LQR}=C_{LQR} ^\top C_{LQR}$ and $R_{LQR}=5$, where $C_{LQR}:=\begin{bmatrix} 0 & 0 & 0 & 1 \end{bmatrix}$. For minimizing the effect of disturbance input on the closed loop system, we solve the semidefinite programs described in Section \ref{sec:sepPrinciple} using an off-the-shelf software package CVX \cite{cvx,Grant.08} 
to obtain $J_1=0$, $J_2=-1.943 \times 10^{-3}$. With the ALISE variant, $J_2$ is chosen to be zero, such that the error induced by finite difference approximations is not amplified in \eqref{eq:dexp},
whereas $\mathfrak{d}t$ is chosen as $0.05~s$.

\begin{figure}[!htp]
\begin{center}
\subfigure[With the ELISE variant]{
\includegraphics[scale=0.32,trim=26.75mm 9mm 44.25mm 20mm,clip]{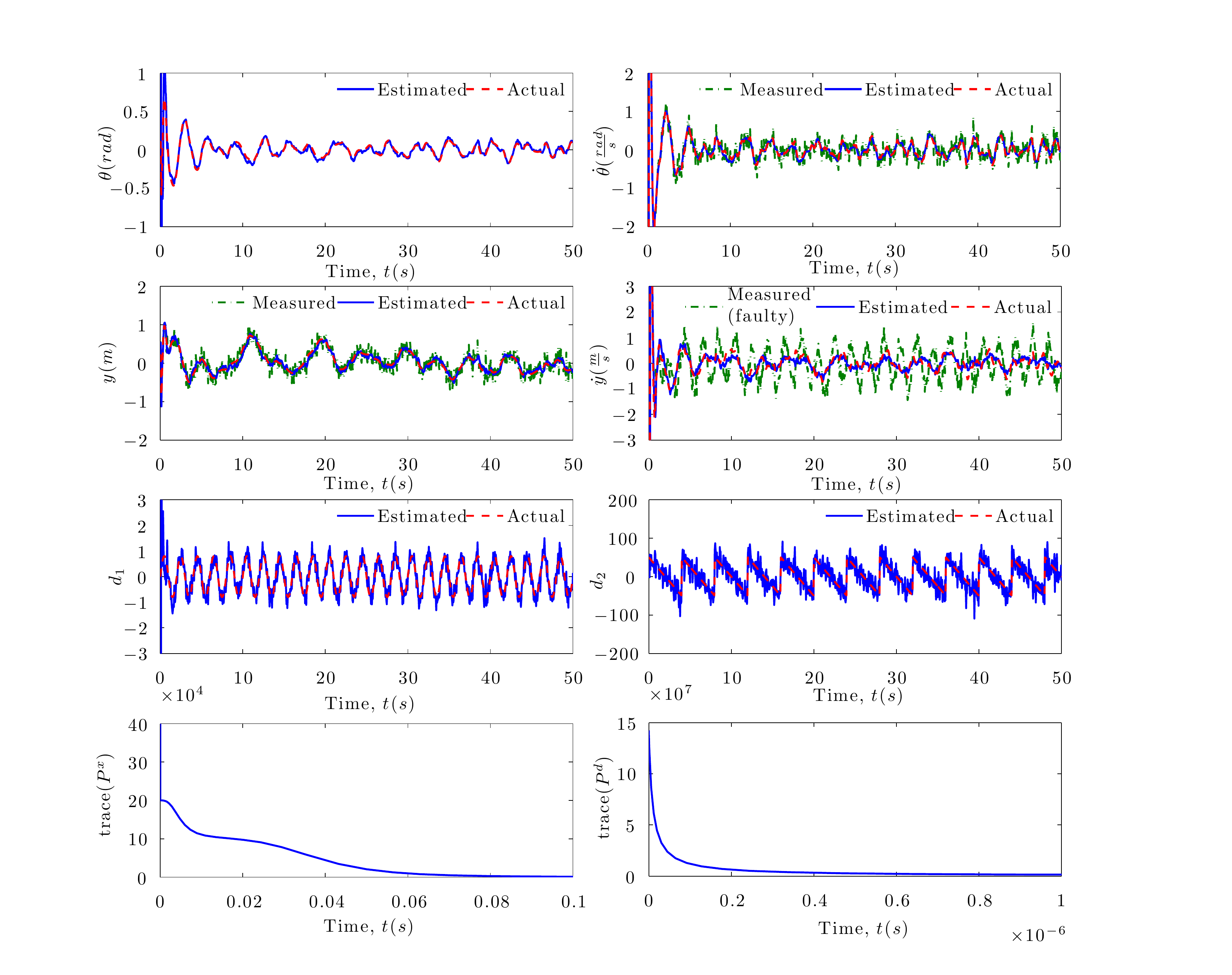}}
\subfigure[With the ALISE variant]{
\includegraphics[scale=0.32,trim=26mm 9mm 43mm 15mm,clip]{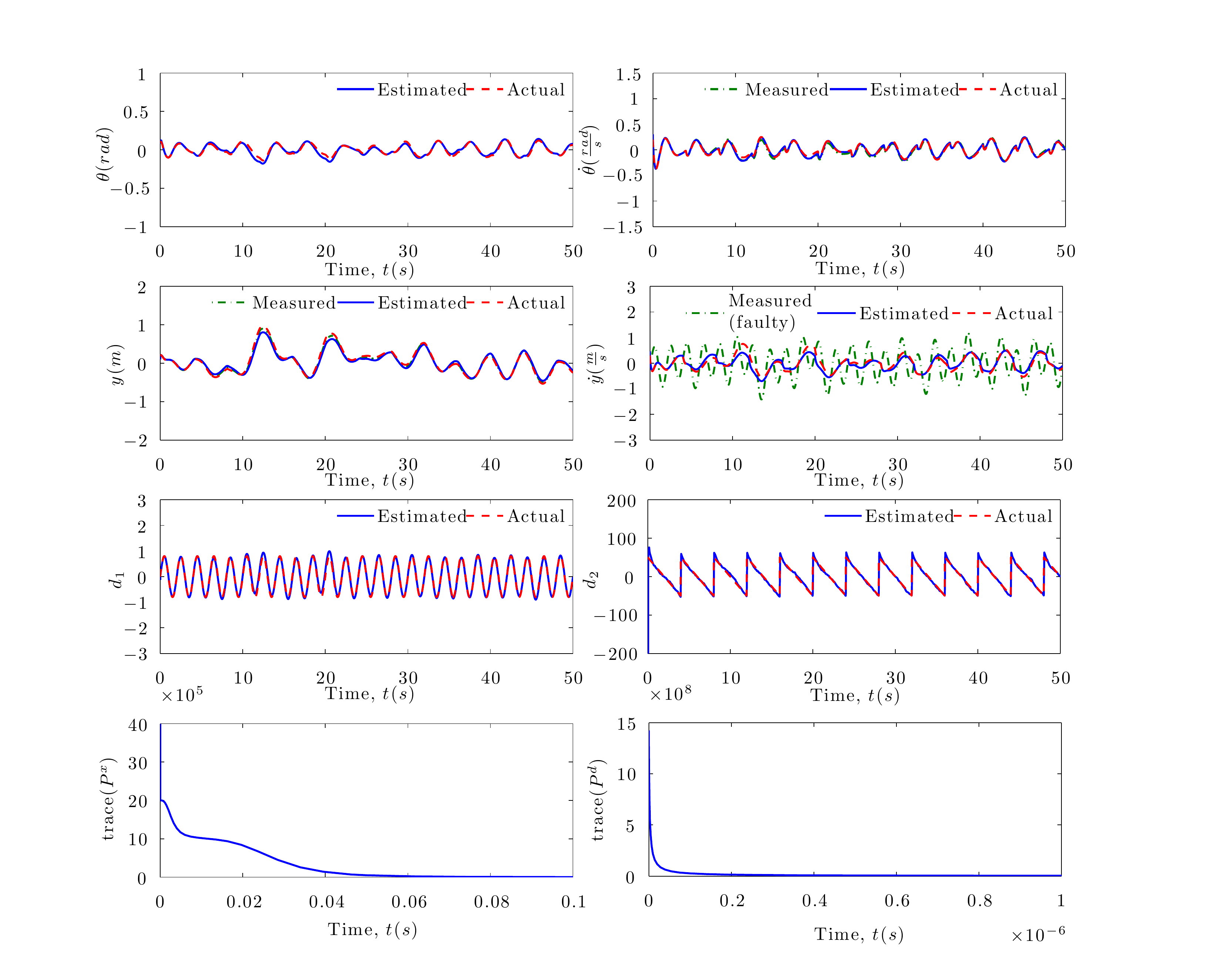}}
\caption{Actual states $\theta$, $q$, $y$, $u$ and its estimates; unknown inputs $d_1$, $d_2$, and its estimates; as well as the trace of continuous estimate error covariance of states and unknown inputs. \label{fig:v2}}
\end{center}
\end{figure}

We implemented the LQR state feedback control law and both filter variants described above in MATLAB/Simulink on a 2.2 GHz Intel Core i7 CPU. Fig. 
\ref{fig:v2} shows the actual and estimated system states, as well as unknown inputs. Note that the projections of the unknown input vector, i.e. $d_1$ and $d_2$, obtained with the transformation \eqref{eq:T_k}, correspond to real unknown signals, in that $d_1=e_m$ and $d_2=w_d$. Thus, we observe from the figures that the proposed filter successfully estimates the system states and also the unknown inputs, $w_d$ and $e_m$, and the traces of the continuous estimate error covariance matrices of both states and unknown inputs converge in less than $0.1~ s$.
However, the convergence rate of the trace of estimate error covariance matrices of ELISE is slower than that of ALISE, while the actual states of the system appear less noisy in ALISE. The reason behind these is the difference in assumed noise models in both variants. For ALISE, the process noise is a `filtered' white noise, whereas for ELISE, the process noise is `unfiltered' and there are two sources of measurement noises, through the output and output derivatives. As before, ALISE performs reasonably well, despite the fact that $\alpha_2$ in \eqref{eq:alpha23} is unbounded given an unbounded $\dot{d}_2$ signal on a set of measure zero. This suggests that the supremum in $\alpha_2$ can only be taken over the set with nonzero measure.
Besides, we observe from the simulations that the finite difference approximation in the ALISE algorithm functions as a low-pass filter of sorts for the input estimate. If $\mathfrak{d}t$ is small, the input estimate appears noisy, whereas a large value of $\mathfrak{d}t$ `smooths' out the high frequencies in the unknown input estimate.

\section{Conclusion} \label{sec:conclusion}
This paper presented an optimal filter for linear time-varying continuous-time stochastic systems that simultaneously estimates the states and unknown inputs in an unbiased minimum-variance sense. We showed that the unknown inputs cannot be estimated without additional assumptions and discussed two variants of the filter: one with an `output derivative' measurement and another without such a measurement. 
The properties of our filter are derived by constructing a `virtual' equivalent system without unknown inputs, which has analogous properties to the Kalman-Bucy filter. Moreover, using limiting case approximations, we find that the optimal discrete-time filter implicitly uses finite difference to obtain an `output derivative'. We also presented conditions under which the proposed filter is uniformly asymptotically stable, and has a steady-state solution, as well as provided the convergence rate of the filter estimates. 
In addition, we showed that a principle of separation of estimation and control also holds for linear systems with unknown inputs and that disturbance rejection is possible. Simulation examples of a nonlinear vehicle reentry problem and a helicopter hover control problem demonstrate the claims in this paper.


\section*{Acknowledgments}
This work  was supported in part by the National Science Foundation, grant \#1239182. M. Zhu was partially supported by ARO W911NF-13-1-0421 (MURI), NSA H98230-15-1-0289 and NSF CNS-1505664.

\bibliographystyle{unsrt}
\bibliography{biblio}

\appendix

In this Appendix, we first provide proofs of Lemmas \ref{lem:convergence}, \ref{lem:convergenceInput}, \ref{lem:convergence2} and \ref{lem:convergenceInput2} on the convergence of state and input estimate biases of ELISE and ALISE. 
Then, we prove the claim of optimality of ELISE in the minimum-variance unbiased sense in Theorem \ref{thm:main} by first constructing a `virtual' equivalent system without unknown inputs with analogous properties to the Kalman-Bucy filter. Then, we provide an alternative derivation by means of limiting case approximations of the optimal discrete-time filter presented in a previous work \cite{Yong.Zhu.ea.Automatica16}, which we observe to implicitly use finite difference to obtain an `output derivative'. Next, we show in Appendix \ref{sec:equivAlg} that \eqref{eq:xhat} and \eqref{eq:xhat2} are equivalent, from which it follows that the state estimate of ALISE is optimal (Theorem \ref{thm:main2}). 
We then derive the conditions under which the optimal filter is uniformly asymptotically stable, given in Theorems \ref{thm:stable} and \ref{thm:stable2}.
Finally, we provide the convergence proofs of Theorems \ref{thm:conv} and \ref{thm:conv2} as well as a proof of Proposition \ref{remark}.

\subsection{Proof of Lemmas \ref{lem:convergence} and \ref{lem:convergence2}}  \label{sec:stateConv}

In this section, we derive the convergence rate of the state estimate bias that was given in Lemmas \ref{lem:convergence} and \ref{lem:convergence2}.
We first provide a proof for the convergence rate of the expected state estimate bias for ELISE. Then, we show that the same convergence rate is true of ALISE, by showing that the state estimate of ELISE and ALISE are equivalent.

\subsubsection{Error Bound on State Estimate for ELISE}
From \eqref{eq:dhat} and choosing the matrices $M_1$ and $M_2$ such that $M_1 \Sigma =I$ and $M_2 \overline{C}_2 G_2=I$, which is possible because $\Sigma$ and $\overline{C}_2 G_2$ have full rank by assumption, we obtain
\begin{align} \label{eq:dtilde}
\begin{array}{ll}
\tilde{d}_1:=d_1-\hat{d}_1&=-M_1 (C_1 \tilde{x}+ v_1),\\
\tilde{d}_2:=d_2-\hat{d}_2&=-M_2 (\overline{C}_2 \hat{A}+\overline{T}_2 \doverline{C}) \tilde{x} - M_2 \overline{v}_2 \\ &\qquad + M_2 \overline{C}_2 G_1 M_1 v_1-M_2 \overline{C}_2 W w,
\end{array}
\end{align} \noindent
where $\hat{A}:=A-G_1 M_1 C_1$. Note that $\overline{v}_2=T_2 \dot{v}$ in the case when the signal $\dot{y}$ is known, as is assumed for ALISE. Next, substituting \eqref{eq:dtilde} into the system dynamics in \eqref{eq:sys}, and using \eqref{eq:xhat}, we obtain the state estimate error system
\begin{align} \label{eq:xtilde}
\dot{\tilde{x}}  &= \dot{x}- \dot{\hat{x}}=\overline{A} \tilde{x} + \overline{w} - L(C_2 \tilde{x} + v_2),
\end{align} \noindent
where $\overline{A}$ and $\overline{w}$ are as defined in Theorem \ref{thm:stable}. 

The state estimate bias system, $\dot{\breve{x}}=\breve{A} \breve{x}$ (from \eqref{eq:xtilde}), is linear, where $\breve{A}:=\overline{A}-L C_2$ and $\breve{x}:=\mathbb{E}[\tilde{x}]$. Since we assume that the filter is uniformly asymptotically stable and the state estimate bias system is linear, by \cite[Theorem 3.3.8]{Hinrichsen.2005} and \cite[Theorem 3]{Kalman.Bertram.1960}, the resulting state estimate bias of the system 
decays exponentially, i.e., 
 there exist $\gamma$ and $\beta$ such that the state estimate bias converges exponentially as is given in \eqref{eq:stateError}. 

If additionally, $\breve{A}$ is bounded, then by the result in \cite[Theorem 5]{Silverman.1968},
the pair $(\breve{A},I)$ is uniformly completely observable (see Definition \ref{def:uco}), where $I_{n \times n}$ is the identity matrix.
 Next, since $\breve{A}$ is Hurwitz and 
 $(\breve{A},I)$ is uniformly completely observable, with bounded $I$ and
$\breve{A}$, we can apply the result of \cite[Theorem (i)]{Anderson.1967} and \cite[Theorem 5(i)]{Anderson.1969} to obtain explicit expressions of the constants $\beta$ and $\gamma$. 
Besides, given that $(\breve{A},I)$ is uniformly completely observable, 
then there exists a unique positive definite solution, 
$S(t)=\lim_{T \to \infty} \Pi(t,T) \succ 0$ for all $t  \geq t_0$, 
where $\Pi(t,T)$ is defined by $\dot{\Pi}+(\overline{A}-L C_2)^\top \Pi + \Pi (\overline{A}-L C_2)=-I $ with boundary condition $\Pi(T,T)=0$. In addition, 
$S(t)= \displaystyle \lim_{T \to \infty} \int^{T}_{t} \Phi_{\breve{A}(t)}(t,s) \Phi_{\breve{A}(t)}^\top(t,s) ds$, which has eigenvalues that are bounded above and below \cite[Eq. (20-21)]{Anderson.1967} and $V=\breve{x}^\top S \breve{x}$ is a Lyapunov function with $\dot{V}=-\breve{x}^\top  \breve{x}$ \cite[Eq. (23)]{Anderson.1967}. 
For the detailed proof of this, the reader is referred to \cite{Anderson.1967,Anderson.1969}.

From the Lyapunov function above, we apply the approach in \cite[pp. 91-93]{slotine.li.book91} to analyze the convergence rate of the state estimate bias.
Let $\lambda_{max}(S)$ denote the largest eigenvalue of $S$ 
and $\overline{\gamma}:=1/\lambda_{max}(S)$. Then, from
\begin{align*}
\breve{x}^\top \breve{x} \geq  \frac{1}{\lambda_{max}(S)} \breve{x}^\top (\lambda_{max}(S) I) \breve{x} \geq \overline{\gamma} \breve{x}^\top S \breve{x} = \overline{\gamma} V,
\end{align*}
\noindent we have $\dot{V} \leq - \overline{\gamma} V \leq - 2 \gamma V$, where $\gamma$ is the supremum of $\overline{\gamma}/2$ over the set of all $\overline{\gamma}$ for all $t\geq t_0$. Then, applying the convergence lemma in \cite[p. 91]{slotine.li.book91}, we have $$\underline{\lambda}_{min}(S) \| \breve{x}\|^2 \leq
\lambda_{min}(S) \| \breve{x}\|^2 \leq \breve{x}^\top S \breve{x} \leq V(t_0) e^{-2{\gamma}t}.$$ 
Hence, we obtain $\| \mathbb{E}[\tilde{x}] \| = \| \breve{x} \| \leq \beta e^{-\gamma (t-t_0)}$ where $\beta$ and $\gamma$ are given in \eqref{eq:betagamma}. 

\subsubsection{Error Bound on State Estimate for ALISE}  \label{sec:equivAlg}

In this section, we provide an alternative to ELISE for estimating the state of the system of interest, when an additional measurement $\overline{y}$ containing information that is equivalent to the `output derivative' is unavailable, which is a central feature of ALISE. First, we note that the additional measurement would be superfluous if the output derivative is fortuitously available. Thus, the idea is to derive a state estimator through indirect access of $\dot{y}$ with only measurements of $y$. This same idea would also apply for cases when $\dot{u}$ is not easily computed. Therefore, to circumvent the need to have direct access to $\dot{y}$ and $\dot{u}$ in ALISE, we propose an equivalent state estimation algorithm given by \eqref{eq:xhat2} that produces the same state estimate as \eqref{eq:xhat} with only $y$ and $u$, which are known.
Using \eqref{eq:dhat1} and \eqref{eq:variant1} with $\overline{z}_2=T_2 \dot{y}$ and the matrices according to Special Case \ref{sp} as well as rearranging and combing terms, the state estimation \eqref{eq:xhat} can be rewritten as follows:
\begin{align}
\begin{array}{ll}
 \dot{\hat{x}}&=A \hat{x} + B u + G_1 M_1(z_1-C_1 \hat{x}-D_1 u)+G_2 M_2\\
 &\quad (  T_2 \dot{y} -(C_2 A+T_2 \dot{C}) \hat{x}-C_2 G_1 M_1(z_1-C_1 \hat{x}-D_1 u)\\
  &\quad -D_2 \dot{u}-(C_2 B +T_2 \dot{D}) u) + L (z_2-C_2 \hat{x} -D_2 u)  \\ 
 &=(\overline{A}-L C_2) \hat{x} + (\overline{B}-LD_2)u + \overline{G} M_1 z_1 + L z_2 \\
 &\quad+G_2M_2 T_2 \dot{y}-G_2M_2 D_2 \dot{u}\\
 &:=g(\hat{x},u,z_1,z_2)+\Phi_1 \dot{y} + \Phi_2 \dot{u}, \vspace{-0.45cm}
 \end{array} \label{e2}
 \end{align} \noindent
 where $\overline{B}:=(I-G_2M_2 C_2)(B-G_1M_1D_1)$, $\overline{G}=(I-G_2 M_2 C_2) G_1 M_1$.
Then, to derive an equivalent without $\dot{y}$ and $\dot{u}$, we let
\begin{align}
\begin{array}{rl}\dot{\theta} &= g(\check{x},u,z_1,z_2)-\dot{\Phi}_1 y - \dot{\Phi}_2 u, \\
{\check{x}}& = \Phi_1 y + \Phi_2 u + \theta,\end{array}\label{e1}
\end{align} \noindent
where $\dot{\Phi}_1$ and $\dot{\Phi}_2$ can be obtained by differentiating ${\Phi}_1$ and ${\Phi}_2$. The resulting equations are summarized in Algorithm \ref{algorithm2}.
Taking the derivative of ${\check{x}}$, we have
\begin{align*} \dot{{\check{x}}} &= \Phi_1 \dot{y} +\dot{\Phi}_1 y + \dot{\Phi}_2 u+ \Phi_2 \dot{u} + \dot{\theta} \\&= \Phi_1\dot{y} + \Phi_2 \dot{u} + g({\check{x}},u,z_1,z_2).
\end{align*}
So the output $\check{x}$ of~\eqref{e1} is identical to that of $\hat{x}$ in~\eqref{e2}. However,~\eqref{e1} does not include $\dot{y}$ and $\dot{u}$, as desired. Nevertheless, because of the different assumed noise models, the resulting filter gain $L$ is different in \eqref{eq:xhat} and \eqref{eq:xhat2}. The filter gain equation and Riccati differential equation remain the same, 
but are computed with different noise covariance matrices of $w$, $v$ and $\overline{v}:=\dot{v}$: 
\begin{align*}
Q &= P^w,&  
R =\begin{bmatrix} I & 0 \end{bmatrix} P^v \begin{bmatrix} I & 0 \end{bmatrix}^\top,\\
\overline{R}&=\begin{bmatrix} 0 & I \end{bmatrix} P^v \begin{bmatrix} 0 & I \end{bmatrix}^\top,&
\grave{R}=\begin{bmatrix} I & 0 \end{bmatrix} P^v \begin{bmatrix} 0 & I \end{bmatrix}^\top,
\end{align*}
where $P^w$ and $P^v$ are propagated covariances that are solutions to the differential Lyapunov equations given by $\dot{P}^w=-{A}_w P^w -P^w \overline{A}_w^\top + B_w Q_G B_w^\top$ and $\dot{P}^v=\overline{A}_v P^v +P^v \overline{A}_v^\top + \overline{B}_v R_G \overline{B}_v^\top$, with $P^w(t_0)=\mathcal{P}^w_0$ and $P^v(t_0)=\mathcal{P}^v_0$, respectively, as given in \cite{Simon.2006}. Note that $P^w$ and $P^v$ are bounded for all $t \geq t_0$, and their bounds are given in \cite{Hmamed.1990}. 

\subsection{Proof of Lemmas \ref{lem:convergenceInput} and \ref{lem:convergenceInput2}}
We now prove Lemmas \ref{lem:convergenceInput} and \ref{lem:convergenceInput2}, which give a bound on the input estimate bias as a function of time, $t$, and time difference of the finite difference approximation, $\mathfrak{d}t$, which results from a biased initial state estimate, and is induced by the finite difference approximation in ALISE.

\subsubsection{Error Bound on Input Estimate for ELISE}
Since we have shown in Appendix \ref{sec:stateConv} that $\|\mathbb{E}[\tilde{x}]\|$ converges to zero with rate $\gamma$, it follows from \eqref{eq:dtilde} that
\begin{align*}
\begin{array}{rl}
 \|\mathbb{E}[\tilde{d}]\| &\leq  \| V_1 \mathbb{E}[\tilde{d}_1] \|+ \| V_2\mathbb{E}[\tilde{d}_2]\|\\
&\leq (\|V_1 M_1 C_1 \| +\|V_2 M_2 (\overline{C}_2\hat{A}+\overline{T}_2\doverline{C})\|) \beta e^{-\gamma (t-t_0)},\end{array} 
\end{align*} \noindent
i.e., the convergence rate of input estimate bias is also $\gamma$.

\subsubsection{Error Bound on Input Estimate for ALISE \label{sec:errorBound}}

Unlike the state estimate, the estimate of the unknown input can only be computed to any degree of accuracy when compared to the MVU input estimate assuming that the exact output derivative is known. This is not unexpected, as this is also the same extent that observer designs (e.g.,\cite{Corless.98,Xiong.2003}) are able to achieve. Thus, in this section, we provide the expected error bound on the unknown input estimate given by \eqref{eq:variant2}, which, asymptotically, is arbitrarily small.

As seen in \eqref{eq:variant2}, the ALISE algorithm utilizes the backward finite approximation of the output derivative. This induces an error in the estimate $\hat{d}_2$, when compared to the ideal case in which $\dot{z}_2=T_2 \dot{y}$ ($\dot{T}_2=\dot{U}_2^\top=0$ by Theorem \ref{thm:Hdot}) is accessible. The next lemma characterizes the effect of the approximation error on the estimate, specifically, on the bias and variance of the estimate, $\mathbb{E}[d_2-\hat{d}_2]$ and $\mathbb{E}[(d_2-\hat{d}_2)(d_2-\hat{d}_2)^\top]$.

\begin{lem} \label{lem:Taylor}
The error induced in the estimate of $\hat{d}_2$ by replacing the exact $\dot{z}_2=T_2 \dot{y}$ with its finite difference approximation is given by
\begin{align}
  {\overline{d}}_2- \hat{d}_2&= M_2 \mathbb{E}[ \dot{z}_2-\frac{z_2(t)-z_2(t- \mathfrak{d}t)}{\mathfrak{d}t}] = \frac{1}{2}  M_2 \mathbb{E}[\ddot{z}_2(c)] \mathfrak{d}t \label{eq:errorApprox}
\end{align} \noindent
for some $c \in (t-\mathfrak{d}t, t)$, where the input estimate with perfect knowledge of $\dot{z}_2$ is defined as $\overline{d}_2:=M_2(T_2 \dot{y}-(C_2A + T_2 \dot{C} \hat{x}-C_2 B u - C_2 G_1 \hat{d}_1 -D_2 \dot{u}-T_2 \dot{D} u))$ and
\begin{align*}
\begin{array}{ll}
\ddot{z}_2 &=T_2[(2 \dot{C} A +C \dot{A}+C A^2 + \ddot{C})x+(\dot{C} B + C A B + C \dot{B} \\
&\quad + \dot{C} B + \ddot{D})u + (CB + 2 \dot{D}) \dot{u} + D \ddot{u} +(2 C \dot{G}_1 + CA G_1  \\
&\quad + C \dot{G}_2) d_1 +C G_2 \dot{d}_1 +\dot{H}_1 \dot{d}_1 + \ddot{H}_1 d_1 + H_1 \ddot{d}_1 + \dot{H}_1 \dot{d}_1\\
&\quad +(2 \dot{C} G_2 +CAG_2 + C \dot{G}_2) d_2 + C G_2 \dot{d}_2+(\dot{C}W + C \dot{W}  \\
&\quad - C WA_w)w+C W B_w w_G + \ddot{v}]\\
&=\mathbb{E}[\ddot{z}_2]+T_2 ((\dot{C}W + C \dot{W} - C WA_w)w+ C W B_w w_G  \\
&\quad -A_{\dot{v}} \dot{v} - A_v v+ B_v v_G).
\end{array}
\end{align*}
\end{lem}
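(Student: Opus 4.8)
The plan is to reduce the identity to Taylor's theorem applied to the (twice differentiable) signal $z_2$, and then to obtain the explicit form of $\ddot z_2$ and its mean by repeated differentiation. The first step is to compare \eqref{eq:variant2} with the definition of $\overline d_2$ in the statement: the two right-hand sides are term-by-term identical except that $\overline d_2$ contains the exact derivative $\dot z_2=T_2\dot y$ where $\hat d_2$ contains the backward difference $\tfrac{z_2(t)-z_2(t-\mathfrak{d}t)}{\mathfrak{d}t}$. Subtracting and using the matrices of Special Case \ref{sp} therefore gives $\overline d_2-\hat d_2=M_2\big(\dot z_2-\tfrac{z_2(t)-z_2(t-\mathfrak{d}t)}{\mathfrak{d}t}\big)$, and taking expectations yields the first equality in \eqref{eq:errorApprox}.

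Next I would invoke Taylor's theorem. Under Assumption ($A1'$) the map $t\mapsto z_2(t)=C_2(t)x(t)+D_2(t)u(t)+v_2(t)$ is twice continuously differentiable: $x$ is $C^2$ because the right-hand side $Ax+Bu+G_1 d_1+G_2 d_2+Ww$ of its ODE is $C^1$ (the matrices are smooth, $u$, $d_1$ and $d_2$ are differentiable, and $w$ is differentiable as a first-order Gauss--Markov process), $v_2=T_2 v$ is $C^2$ since $v$ is a second-order Gauss--Markov process, and $C_2$, $D_2$ are smooth with $u$ twice differentiable. Hence, for some $c\in(t-\mathfrak{d}t,t)$,
\begin{align*}
z_2(t-\mathfrak{d}t)=z_2(t)-\dot z_2(t)\,\mathfrak{d}t+\tfrac12\,\ddot z_2(c)\,(\mathfrak{d}t)^2,
\end{align*}
which rearranges to $\dot z_2(t)-\tfrac{z_2(t)-z_2(t-\mathfrak{d}t)}{\mathfrak{d}t}=\tfrac12\,\ddot z_2(c)\,\mathfrak{d}t$; premultiplying by $M_2$ and taking expectations gives the second equality in \eqref{eq:errorApprox}.

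Finally I would compute $\ddot z_2$ explicitly. Writing $z_2=T_2(Cx+Du+H_1 d_1+v)$ (using $Hd=H_1 V_1^\top d=H_1 d_1$) and noting that $\dot T_2=\dot U_2^\top=0$ by Theorem \ref{thm:Hdot}, I would differentiate twice, each time substituting $\dot x=Ax+Bu+G_1 d_1+G_2 d_2+Ww$ together with $\dot w=-A_w w+B_w w_G$ and $\ddot v=-A_{\dot v}\dot v-A_v v+B_v v_G$ from \eqref{eq:secondOrder}; collecting terms and eliminating the $H_1$-contributions via Corollary \ref{cor:cor1} ($T_2 H_1=T_2\dot H_1=T_2\ddot H_1=0$) gives the stated expression for $\ddot z_2$, and $\mathbb E[\ddot z_2]$ follows by dropping the remaining terms, which are linear in the zero-mean signals $w$, $w_G$, $v$, $\dot v$ and $v_G$. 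I expect the main obstacle to be bookkeeping rather than depth: the nonstandard Gauss--Markov noise models in ($A1'$) are precisely what guarantees that $\ddot z_2(c)$ is well defined so Taylor's theorem applies pointwise, and the long chain-rule expansion needs nothing beyond the substitutions above and the identities $T_2 H_1=T_2\dot H_1=T_2\ddot H_1=0$.
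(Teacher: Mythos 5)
Your proposal is correct and follows essentially the same route as the paper: apply Taylor's theorem with Lagrange remainder to $z_2$ (justified by the regularity provided by Assumption ($A1'$)), rearrange to express the backward-difference error as $\tfrac12\ddot z_2(c)\,\mathfrak{d}t$, and then obtain $\ddot z_2$ by differentiating $z_2$ twice with substitution of the state dynamics and the Gauss--Markov noise models, with the $H_1$-terms annihilated by Corollary \ref{cor:cor1}. The only cosmetic difference is that you state the comparison of \eqref{eq:variant2} with $\overline{d}_2$ and the differentiability bookkeeping more explicitly than the paper does, which does not change the argument.
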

\begin{proof}
To obtain the above result, we apply Taylor's theorem (see for e.g., \cite{Rudin.1976} for proof of Taylor's theorem) to obtain
\begin{align} \label{eq:taylor}
z_2(t-\mathfrak{d}t)=z_2(t)-\dot{z}_2((t)) \mathfrak{d}t + \frac{1}{2} \ddot{z}_2(c) (\mathfrak{d}t)^2
\end{align} \noindent
for some $c \in (t-\mathfrak{d}t, t)$, since by Assumption ($A1'$), $\dot{z}_2$ is continuous on $[t-\mathfrak{d}t, t]$ and $\ddot{z}_2$ exists for all $t \in [t-\mathfrak{d}t, t]$. Rearranging the above equation, we have
\begin{align*}
\nonumber \dot{z}_2-\frac{z_2(t)-z_2(t- \mathfrak{d}t)}{\mathfrak{d}t}= \frac{1}{2} \ddot{z}_2(c) \mathfrak{d}t.
\end{align*}
Then, by differentiating $z_2$ twice with respect to $t$ and from \eqref{eq:variant2}, we find the error induced by the finite difference approximation as given in \eqref{eq:errorApprox}.
\end{proof}
\begin{rem}
Note that $\ddot{z}_2$ and $\mathbb{E}[\ddot{z}_2]$ are independent of $\dot{H}_1$, $\ddot{H}_1$ and $\ddot{d}_2$ by Corollary \ref{cor:cor1}. Thus, for the boundedness of $\ddot{z}_2$, the signal $\ddot{d}$ need not be bounded, similar to the assumption for the observer in \cite{Corless.98}. Furthermore, the derivatives of $H$ need not be bounded.
\end{rem}

Armed with Lemma \ref{lem:Taylor}, we now derive the expected error bound in Lemma \ref{lem:convergenceInput}.
The total expected input estimate bias consists of the error given in \eqref{eq:derror} due to initial state estimate bias, and the error induced by the finite difference approximation $\dot{z}_2$ given by Lemma \ref{lem:Taylor}, i.e.,
\begin{align}
\begin{array}{ll}
\mathbb{E}[d-\hat{d}] &=   (V_1 M_1 C_1+ V_2 M_2 (C_2 \hat{A}+T_2 \dot{C}))  \mathbb{E}[\tilde{x}]   \\
&\quad +  \frac{1}{2}V_2 M_2 \mathbb{E}[\ddot{z}_2(c)] \mathfrak{d}t,\\
\Rightarrow \| \mathbb{E}[\tilde{d}] \|&\leq  \| (V_1 M_1 C_1+ V_2 M_2 (C_2 \hat{A}+T_2 \dot{C})) \| \beta e^{-\gamma (t-t_0)}   \\
&\quad +  \frac{1}{2} \|V_2 M_2 \mathbb{E}[\ddot{z}_2(c)]\| \mathfrak{d}t \\
& \leq  \alpha_1 e^{-\gamma (t-t_0)} +  \alpha_2 \mathfrak{d}t,\vspace{-0.45cm}\end{array} \label{eq:E_d}
\end{align} \noindent
where $\alpha_1$ and $\alpha_2$ are given in \eqref{eq:alpha1} and \eqref{eq:alpha23}. 

Next, we find the approximation error induced by the finite difference approximation on the input error covariance matrix. Furthermore, we have ${\rm tr}(\mathbb{E}[\tilde{d} \tilde{d}^\top])={\rm tr} (P^d_1)+ {\rm tr}(P^d_2)$ (shown later in \eqref{eq:trace}). Since there is no approximation error in the estimate of $\hat{d}_1$ because it is independent of $\dot{z}_2$, the only source of approximation error comes from the error covariance matrix $P^{d}_2$, which can be computed from \small
\begin{align}
\begin{array}{ll}
 P^d_2 \delta(0)=\mathbb{E}[(d_2- \hat{d}_2)(t) (d_2- \hat{d}_2)^\top (t)]\\
 =  \mathbb{E}[(d_2- \overline{d}_2)(t)(\overline{d}_2- \hat{d}_2)^\top(t)]+\mathbb{E}[(\overline{d}_2- \hat{d}_2)(t) (d_2- \overline{d}_2)^\top(t)] \\ 
 \ +\mathbb{E}[(\overline{d}_2- \hat{d}_2)(t)(\overline{d}_2- \hat{d}_2)^\top(t)]  +\mathbb{E}[(d_2- \overline{d}_2)(t)(d_2- \overline{d}_2)^\top(t)] \\ 
 =\frac{1}{2}\mathbb{E}[\tilde{d}_2 \ddot{z}_2(c)^\top]  M_2^\top \mathfrak{d}t + \frac{1}{2} M_2 \mathbb{E}[\ddot{z}_2(c) \tilde{d}_2^{\top}] \mathfrak{d}t \\
 \quad +  \frac{1}{4} M_2 \mathbb{E}[\ddot{z}_2(c)\ddot{z}_2(c)^\top] M_2^\top (\mathfrak{d}t)^2+M_2 \tilde{R}_2 M_2^\top  (t) \delta(0) \\ 
=\frac{1}{2}\mathbb{E}[\tilde{d}_2 \ddot{z}_2(c)^\top]  M_2^\top \mathfrak{d}t + \frac{1}{2} M_2 \mathbb{E}[\ddot{z}_2(c) \tilde{d}_2^{\top}] \mathfrak{d}t \\
\quad +  \frac{1}{4} M_2 \mathbb{E}[\ddot{z}_2(c)]\mathbb{E}[\ddot{z}_2(c)]^\top M_2^\top (\mathfrak{d}t)^2\\
 \quad +  \frac{1}{4} M_2 \mathbb{E}[(\ddot{z}_2(c)-\ddot{z}_2(c)])(\ddot{z}_2(c)-\mathbb{E}[\ddot{z}_2(c)]^\top) M_2^\top (\mathfrak{d}t)^2\\
 \quad +M_2 \tilde{R}_2 M_2^\top  (t) \delta(0) \\ 
= ( (\frac{1}{4} M_2 T_2(\begin{bmatrix} -A_v & -A_{\dot{v}} \end{bmatrix}P^v \begin{bmatrix} -A_v & -A_{\dot{v}} \end{bmatrix}^\top\hspace{-0.1cm}+\hspace{-0.05cm} C W B_w Q_G B_w^\top W^\top C^\top\\
\quad +B_v R_G B_v^\top +(\dot{C}W + C \dot{W} - C WA_w)P^w(\dot{C}W + C \dot{W} \\
\quad - C WA_w)^\top) T_2^\top M_2^\top) (c) (\mathfrak{d}t)^2  +M_2 \tilde{R}_2 M_2^\top (t)) \delta(0), 
 \end{array} \label{eq:Pd2ALISE}
\end{align} \noindent \normalsize
where we applied Lemma \ref{lem:Taylor}, \eqref{eq:dtilde}, and removed the negligible 
contributions of $\mathbb{E}[\tilde{d}_2 \ddot{z}_2(c)^\top]$ and $\mathbb{E}[\ddot{z}_2(c)]\mathbb{E}[\ddot{z}_2(c)^\top]$ since $c \neq t $ such that $\delta(t-c) \ll \delta(0)$ , and both $\mathbb{E}[\tilde{d}_2]$ and $\mathbb{E}[\ddot{z}_2]$ are finite. Thus, comparing the above error covariance matrix \eqref{eq:Pd2ALISE} with the input estimate error covariance matrix with perfect knowledge of $\dot{z}_2$ given by $P^{\overline{d}}_2:=M_2 \tilde{R}_2 M_2^\top$, 
we can find the trace of the difference between the two error covariance matrices:
\begin{align}
\begin{array}{ll}
|{\rm tr} (P^d - P^{\overline{d}})| = |
 \frac{1}{4} {\rm tr}(M_2 T_2(C W B_w Q_G B_w^\top  W^\top C^\top\\
\qquad  +[-A_v \ -A_{\dot{v}}] P^v [-A_v \ -A_{\dot{v}}]^\top + B_v R_G B_v^\top  \\
\qquad +(\dot{C}W + C \dot{W} - C WA_w)P^w(\dot{C}W + C \dot{W} \\
\qquad - C WA_w)^\top    )   T_2^\top M_2^\top)(c)|  (\mathfrak{d}t)^2   \leq \alpha_3  (\mathfrak{d}t)^2, 
\end{array}\label{eq:tracePd}
 \end{align} \noindent
where $\alpha_3$ is as given in \eqref{eq:alpha23}.
Since $\alpha_1$, $\alpha_2$ and $\alpha_3$ in \eqref{eq:E_d} and \eqref{eq:tracePd} are bounded by Assumption ($A1'$) and by choice of the noise models which results in finite noise intensities (see bounds in \cite{Hmamed.1990}), 
and $\mathfrak{d}t$ can be chosen to be arbitrarily small, the expected value of the estimate $\hat{d}_2$ given by \eqref{eq:variant2} asymptotically tends to the true value of $d_2$ with minimum-variance error covariance to any desired accuracy. Thus, Lemma \ref{lem:convergenceInput} holds.
\subsection{Proof of Theorem \ref{thm:main}}%

We first provide a proof of Theorem \ref{thm:main} by constructing a `virtual' equivalent system without unknown inputs, which allows us to derive analogous properties of
our filter to that of the Kalman-Bucy filter. Then, we provide an alternative derivation for the Special Case \ref{sp} by means of limiting case approximations of the optimal discrete-time filter \cite{Yong.Zhu.ea.Automatica16}. In the process, we gain insight into the subtle difference between the special case continuous-time filter and the discrete-time filter in \cite{Yong.Zhu.ea.Automatica16}. In particular, we observed that the optimal discrete-time filter implicitly uses finite difference to obtain an `output derivative'. 
In the case with a biased initial state estimate, the associated state and unknown input bias decays exponentially as shown in Lemmas \ref{lem:convergence} and \ref{lem:convergenceInput}.

\subsubsection{Proof 1: By Equivalent System without Unknown Inputs} \label{sec:proof1}

In this first proof of Theorem \ref{thm:main}, we construct 
a `virtual' equivalent system without unknown inputs, for which analogous results of the Kalman-Bucy filter  \cite{kalman.bucy.1961} can be inferred. To this end, as was also observed in \cite{Yong.Zhu.ea.Automatica16}, we view the unknown input as consisting of a known component given by the input estimate, and a zero-mean random variable with known variance which can be dealt in the same manner as with process and measurement
noise signals:
\begin{align} \label{eq:d}
\begin{array}{l}
d_1=\hat{d}_1+(d_1-\hat{d}_1):=\hat{d}_1+ \tilde{d}_1,\\
d_2=\hat{d}_2+(d_2-\hat{d}_2):=\hat{d}_2+ \tilde{d}_2.
\end{array}
\end{align} \noindent
Since $\mathbb{E}[\tilde{x}]$ decays exponentially (Lemma \ref{lem:convergence}), and the process and measurement noises have zero mean, the expected values of both $\tilde{d}_1$ and $\tilde{d}_2$ exponentially tend towards zero-mean random variables with the following (auto-)correlations:
\begin{align}
&\begin{array}{ll}
&\mathbb{E}[\tilde{d}_1(t) \tilde{d}_1(t')^\top] :=P^d_1 (t) \delta(t-t')\\&\quad =(M_1 R_1 M_1^\top+M_1 C_1 P^x C_1^\top M_1^\top)(t) \delta(t-t'), \end{array} \label{eq:Pd1} \\
&\begin{array}{ll}& \mathbb{E}[\tilde{d}_1(t) \tilde{d}_2(t')^\top]:=P^d_{12} (t) \delta(t-t')\\& \quad=(M_1 C_1 P^x (\hat{A}^\top \overline{C}_2^\top+ \doverline{C}^\top \overline{T}_2^\top) M_2^\top + M_1 \grave{R}_{12} M_2^\top\\
& \qquad - M_1 R_1 M_1^\top G_1^\top \overline{C}_2^\top M_2^\top)(t)\delta(t-t'),\end{array} \label{eq:Pd12}\\
& \begin{array}{ll}&\mathbb{E}[\tilde{d}_2(t) \tilde{d}_2(t')^\top]:=P^d_2 (t) \delta(t-t')\\ &\quad =(M_2 \tilde{R}_2 M_2^\top)(t) \delta(t-t'),\end{array} \label{eq:Pd2} \\
&\begin{array}{ll} &\mathbb{E}[\tilde{d}(t) \tilde{d}(t')^\top] \hspace{0.3cm}:=P^d (t) \delta(t-t')\\
 &\quad =(V_1 P^d_1 V_1^\top +V_1 P^d_{12} V_2^\top+V_2 P^{d\top}_{12} V_1^\top\\
 &\qquad +V_2 P^d_2 V_2^\top)(t) \delta(t-t'),\end{array} \label{eq:Pd}
\end{align} \noindent
where we defined $\mathbb{E}[\tilde{x}(t) \tilde{x}^\top (t')]:=P^x (t) \delta(t-t')$, $\tilde{R}_2:=(\overline{C}_2 \hat{A}+\overline{T}_2 \doverline{C}) P^x (\overline{C}_2 \hat{A}+\overline{T}_2 \doverline{C})^\top + \overline{C}_2 \hat{Q} \overline{C}_2^\top+\overline{R}_2-\grave{R}_{12}^\top M_1^\top G_1^\top \overline{C}_2^\top-\overline{C}_2 G_1 M_1 \grave{R}_{12}$ and $\hat{Q}:=W Q W^\top+G_1 M_1 R_1 M_1^\top G_1^\top$ as well as omitted $\mathbb{E}[\tilde{x} (t) v_1^\top (t')]$, $\mathbb{E}[\tilde{x}(t) \overline{v}_2^\top(t')]$ and $\mathbb{E}[\tilde{x}(t) w^\top(t')]$ due to their negligible contributions to the above correlations.

To obtain the best linear unbiased estimate of both projections of the unknown inputs, $\hat{d}_1$ and $\hat{d}_2$, as in its discrete-time counterpart \cite{Yong.Zhu.ea.Automatica16}, we choose $M_1$ and $M_2$ such that the assumption in 
the Gauss-Markov Theorem is satisfied, as outlined in \cite[pp. 96-98]{kailath.2000}:
\begin{align}
M_1 =\Sigma^{-1}, \ 
M_2 = (G_2^\top C_2^\top \tilde{R}_2^{-1} C_2 G_2)^{-1} G_2^\top C_2^\top \tilde{R}_2^{-1}.\label{eq:M}
 \end{align} \noindent

Next, we note the following equality: 
\begin{align} \label{eq:trace}
\begin{array}{l}
{\rm tr} (\mathbb{E}[\tilde{d} \tilde{d}^\top])={\rm tr} (\mathbb{E}[V \begin{bmatrix} \tilde{d}_{1} \\ \tilde{d}_{2} \end{bmatrix} \begin{bmatrix} \tilde{d}_{1}^{ \, \top} & \tilde{d}_{2}^{ \, \top} \end{bmatrix} V^\top])\\
= {\rm tr} (V^\top V \mathbb{E}[\begin{bmatrix} \tilde{d}_{1} \\ \tilde{d}_{2} \end{bmatrix} \begin{bmatrix} \tilde{d}_{1}^{ \, \top} & \tilde{d}_{2}^{ \, \top} \end{bmatrix}])={\rm tr} (P^d_{1})+{\rm tr} (P^d_{2}).\end{array}
\end{align} \noindent
Since the unbiased estimate of $\hat{d}_{1}$ is unique, the minimum of \eqref{eq:trace} is given by  $\min {\rm tr} (\mathbb{E}[\tilde{d} \tilde{d}^\top])= {\rm tr} (\mathbb{E}[\tilde{d}_{1} \tilde{d}_{1}^\top]) + \min {\rm tr} (\mathbb{E}[\tilde{d}_{2} \tilde{d}_{2}^\top])$, from which it can be observed that the unbiased estimate $\hat{d}$ has minimum variance when $\hat{d}_{1}$ and $\hat{d}_{2}$ have minimum variances.

Note that even during transients, where the $\tilde{d}_1$ and $\tilde{d}_2$ have non-zero means, the terms contributing to these biases are functions of $\tilde{x}$ and are thus absorbed into the $\overline{A}$ as seen in \eqref{eq:xtilde}. More importantly, the state estimate error dynamics in \eqref{eq:xtilde} is the same as that of 
a Kalman-Bucy filter  \cite{kalman.bucy.1961} for a `virtual' linear system without unknown inputs given by 
\begin{align} \label{eq:equiv1}
\begin{array}{ll}
\dot{x}_e&=\overline{A} x_e + \overline{w},\\
y_e&= C_2 x_e + v_2,
\end{array}
\end{align}
where $\overline{A}$ and $\overline{w}$ are as defined in Theorem \ref{thm:stable} and the noise terms are correlated, i.e., $\mathbb{E}[\overline{w}(t) v_2^\top(t')]=-G_2 M_2 \grave{R}_2^\top (t) \delta(t-t')$.
Since the objectives of both systems are the same, i.e. to obtain an unbiased minimum-variance filter, they are equivalent systems from the perspective of optimal filtering. Hence, the optimal filter is as with the Kalman-Bucy filter with correlated noise (see, e.g., \cite{kailath.2000,Crassidis.2004}), 
i.e., with 
\begin{align} \label{eq:L}
L=(P^x C_2^\top-G_2 M_2 \grave{R}_2^\top) R_2^{-1},
\end{align}
and the state estimate error covariance, $P^x$, is obtained from the Riccati differential equation:
\begin{align} \label{eq:riccati}
\dot{P}^x=\overline{A} P^x + P^x \overline{A}^\top +\overline{Q} - L R_2 L^\top,
\end{align} \noindent
where the noise intensity, $\overline{Q}$, is given in Theorem \ref{thm:stable}. 

In summary, the proposed filter provides the best linear unbiased estimate of the unknown input and the minimum-variance unbiased estimate of the state; thus, Theorem \ref{thm:main} holds.

\subsubsection{Proof 2: By Limiting Case Approximations}
An alternate derivation of the optimal filter can be obtained for Special Case \ref{sp} from the optimal discrete-time filter \cite{Yong.Zhu.ea.Automatica16} using limiting case approximations. Although this derivation lacks rigor due to various approximations, this is interesting from a pedagogical point of view, since this is often used to derive the continuous-time Kalman-Bucy filters in textbooks (e.g. \cite{Simon.2006}). 

If the sampling period $\Delta t$ is small, we can use Euler's approximation to write the discretized version of \eqref{eq:sys} as
\begin{align} \label{eq:approx}
&\begin{array}{ll}
x_{k} &\approx(I + A \Delta t) x_{k-1} + B \Delta t u_{k-1} + G_1 \Delta t d_{1,k-1} \\
&\qquad+ G_{2} \Delta t d_{2,k-1} + W w_{k-1}\\
&:=A_{k-1} x_{k-1} + B_{k-1} u_{k-1} + G_{1,k-1} d_{1,k-1} \\
&\qquad+ G_{2,k-1} d_{2,k-1} + W_{k-1} w_{k-1}, \\ 
y_k&= C x_k + D u_k + H_{1}  d_k + v_k\\
&:= C_k x_k + D_k u_k + H_{1,k}  d_k + v_k,\\
 z_{1,k}& = C_{1} x_k + D_{1} u_k +\Sigma d_{1,k} + v_{1,k} \\
 &:= C_{1,k} x_k + D_{1,k} u_k +\Sigma_k d_{1,k} + v_{1,k},\\
z_{2,k}& = C_{2} x_k + D_{2} u_k + v_{2,k} \\
&:= C_{2,k} x_k + D_{2,k} u_k + v_{2,k},
\end{array}
\end{align} \noindent
where the process and measurement noises are $w_k \sim (0, Q \Delta t)$ and $v_k \sim (0, R / \Delta t)$, in which $Q_k \approx Q \Delta t$ as $\Delta t \to 0$, and the discrete measurement noise is approximated as the average value of the continuous noise \cite{Crassidis.2004}.
\balance

Since the first component of the unknown input can be computed pointwise without delay, we expect $M_{1,k} \to M_1$. Thus, we have the estimate $\hat{d}_1$ as in \eqref{eq:dhat} directly from the discrete-time version given by $\hat{d}^D_{1,k}=M_{1,k} (z_{1,k}-C_{1,k} \hat{x}_{k|k}-D_{1,k} u_k)$ \cite{Yong.Zhu.ea.Automatica16}. On the other hand, the limiting case approximation of the second component of the unknown input is given by:
\begin{align*}
\begin{array}{ll}
\hat{d}^D_{2,k-1}&=M_{2,k}(z_{2,k}-C_{2,k} (A_{k-1}  \hat{x}_{k-1|k-1} + B_{k-1} u_{k-1} \\
&\qquad + G_{1,k-1} \hat{d}_{1,k-1}) - D_{2,k} u_{k})\\
&\approx M_{2,k} \Delta t ( T_{2,k}\frac{y_{k}-C_{k}\hat{x}_{k-1|k-1}-D_{k} u_{k} }{\Delta t}   \\
&\qquad-C_{2,k} (A \hat{x}_{k-1|k-1}+B u_{k-1} + G_{1} \hat{d}_{1,k-1})) \\ 
& = M_{2,k} \Delta t \left[ T_{2,k}( \frac{y_{k}-\hat{y}_{k-1}}{\Delta t} +\frac{(C_{k-1} -C_k) \hat{x}_{k-1|k-1}}{\Delta t} \right. \\
&\qquad+ \frac{D_{k-1} u_{k-1}-D_{k} u_k}{\Delta t} + \frac{H_{1,k-1} d_{1,k-1}}{\Delta t}) \\
& \qquad \left. -C_2 A \hat{x}_{k-1|k-1} -C_{2} B u_{k-1}- C_{2} G_{1} \hat{d}_{1,k-1}  ) \right] \\ 
& = M_{2,k} \Delta t \left[\frac{z_{2,k}-\hat{z}_{2,k-1}}{\Delta t} + T_{2,k} \frac{(C_{k-1} -C_k)} {\Delta t} \hat{x}_{k-1|k-1}\right. \\
&\qquad- \frac{(D_k-D_{k-1}) u_{k-1}+D_{k} (u_k-u_{k-1})}{\Delta t} \\
&\qquad + T_{2,k} \frac{(H_{1,k} +(H_{1,k-1}-H_{1,k})) d_{1,k-1}}{\Delta t}   -C_2 A \hat{x}_{k-1|k-1} \\
&\qquad \left.-C_{2} B u_{k-1}- C_{2} G_{1} \hat{d}_{1,k-1}  ) \right],
\end{array}
\end{align*}
where the first equation is the discrete-time version from \cite{Yong.Zhu.ea.Automatica16}, and we substituted the approximate matrices $A_{k-1} \approx I + A \Delta t$, $B_{k-1} \approx B \Delta t$, $C_{2,k} \approx C_2$, $D_{2,k} \approx D_2 \Delta t$ and $G_{1,k-1} \approx G_1 \Delta t$ from \eqref{eq:approx}. We also defined $M_2 :=\lim_{\Delta t \to 0} M_{2,k} \Delta t$, $\hat{y}_{k-1}:=C_{k-1} \hat{x}_{k-1|k-1}+D_{k-1} u_{k-1}$ and $\hat{z}_{2,k-1}:=T_{2,k} \hat{y}_{k-1}$. We can further simplify the above equation by noticing that $T_{2,k} H_{1,k}=0$. Taking the limit of $\Delta t \to 0$, we obtain
\begin{align} \label{eq:dhat_c}
\begin{array}{rl}
 \hat{d}_{2}&= M_{2}(\overline{z}_2-C_{2} A \hat{x} -C_{2} B u - C_{2} G_{1} \hat{d}_{1} -T_2 \dot{C} \hat{x}\\ & \qquad -T_2 \dot{D} u-D_{2}\dot{u}),\end{array}
\end{align} \noindent
where we replaced the term $\lim_{\Delta t \to 0} \frac{z_{2,k}-\hat{z}_{2,k-1}}{\Delta t}= T_2 (\lim_{\Delta t \to 0} \frac{y_{k}-{y}_{k-1}}{\Delta t}+\frac{C_{k-1} \tilde{x}_{k-1|k-1}+v_{k-1}}{\Delta t})$ with $\overline{z}_2=T_2 \overline{y}=T_2 (\lim_{\Delta t \to 0}  \frac{y_{k}-y_{k-1}}{\Delta t} +\frac{\overline{v}_{k} \Delta t - (v_k -v_{k-1})}{\Delta t})$, 
which we assume is obtained from the noisy measurement of $\overline{y}$ according to \eqref{eq:sys}, applied Corollary \ref{cor:cor1} 
and defined $\tilde{x}_{k-1|k-1}:=x_{k-1}-\hat{x}_{k-1|k-1}$. This indirectly implies that the optimal discrete-time filter ``differentiates" the second projection of the output, $z_2$, using finite difference. Moreover, we can infer that the equivalent discrete-time estimation of ${d}_{2,k-1}$ corresponding to \eqref{eq:dhat_c} is 
\begin{align} \label{eq:dhat2new}
\begin{array}{ll} \hat{d}_{2,k-1}&=M_{2,k}(z_{2,k}-C_{2,k} (A_{k-1}  \hat{x}_{k-1|k-1}  \\& \qquad + B_{k-1} u_{k-1}+ G_{1,k-1} \hat{d}_{1,k-1}) - D_{2,k} u_{k}\\
&\qquad -T_{2,k} C_{k-1} \tilde{x}_{k-1|k-1}+T_{2,k}\overline{v}_{k} \Delta t- v_{2,k}),
\end{array}
\end{align} \noindent
which would be not implementable because the noise terms and the true state are not available. Next, to obtain the best linear unbiased estimate of both projections of the unknown inputs, $\hat{d}_{1,k}=\hat{d}^D_{1,k}$ and $\hat{d}_{2,k-1}$, we choose $M_{1,k}$ and $M_{2,k}$ as in \cite{Yong.Zhu.ea.Automatica16}, such that $M_{1,k}\Sigma_k =I$, $M_{2,k} C_{2,k} G_{2,k-1}=I$ and the Gauss-Markov Theorem is satisfied \cite[pp. 96-98]{kailath.2000}:
\begin{align*} 
\begin{array}{rl}
M_{1,k} &=\Sigma^{-1}_k,\\
 M_{2,k} &= (G_{2,k-1}^\top C_{2,k}^\top \tilde{R}_{2,k}^{-1} C_{2,k} G_{2,k-1})^{-1} G_{2,k-1}^\top C_{2,k}^\top \tilde{R}_{2,k}^{-1},
\end{array} 
 \end{align*} \noindent
 where $ \tilde{R}_{2,k}:=C_{2,k} \hat{Q}_{k-1} C_{2,k}^\top +[T_{2,k} (C_{k}-C_{k-1})+C_{2,k} (\hat{A}_{k-1} -I)] P^x_{k-1|k-1} [ (C_{k}-C_{k-1})^\top T_{2,k}^\top+(\hat{A}_{k-1} -I)^\top C_{2,k}^\top]+  T_{2,k} \overline{R} T_{2,k}^\top \Delta t - C_{2,k} G_{1,k-1} M_{1,k-1} T_{1,k-1} \grave{R} T_{2,k}^\top -T_{2,k} \grave{R}^\top T_{1,k-1}^\top M_{1,k-1}^\top G_{1,k-1}^\top C_{2,k}^\top$, $\hat{A}_{k}:=A_{k} - G_{1,k} M_{1,k} C_{1,k}$ and $\hat{Q}_{k}:=W_k Q_{k} W_k^\top +G_{1,k} M_{1,k} R_{1,k} M_{1,k}^\top G_{1,k}^\top$; and we applied $\mathbb{E}[\overline{v}_{k} \overline{v}_{k}^\top] := \overline{R}_{k} \approx \frac{\overline{R}}{\Delta t}$ and $\mathbb{E}[{v}_{k-1} \overline{v}_{k}^\top] := \grave{R}_{k-1} \approx \frac{\grave{R}}{\Delta t}$.
Then, substituting the approximate matrices as before, 
as well as defining 
$\dot{C} := \lim_{\Delta t \to 0} \frac{C_{k}-C_{k-1}}{\Delta t}$ and approximating $P^x_{k-1|k-1} \approx \frac{P^x}{\Delta t}$ and $\tilde{R}_{2,k} \approx \frac{\tilde{R}_2}{\Delta t}$, followed by taking the limit of $\Delta t \to 0$, we obtain the filter gains $M_1= \lim_{\Delta t \to 0} M_{1,k}$ and $M_2=\lim_{\Delta t \to 0} M_{2,k} \Delta t$ given by \eqref{eq:M}. 

In addition, by applying the approximations defined in \eqref{eq:approx} to the dynamics of the discrete-time filter proposed in \cite{Yong.Zhu.ea.Automatica16} (ULISE), neglecting higher order terms and taking $\Delta t \to 0$, we obtain the the state estimate dynamics given in \eqref{eq:xhat}, the filter gain $L$ given in \eqref{eq:L} and the Riccati differential equation governing $P^x$
given in \eqref{eq:riccati}, as well as $P^d_1:= \lim_{\Delta t \to 0} P^d_{1,k}\Delta t$, $P^d_{12}:= \lim_{\Delta t \to 0} P^d_{12,k-1}\Delta t$ and $P^d_2:= \lim_{\Delta t \to 0} P^d_{2,k-1}\Delta t$ given by \eqref{eq:Pd1}, \eqref{eq:Pd12} and \eqref{eq:Pd2}. The detailed derivations of these equations follow the same approach as in \cite[Section C]{Yong.Zhu.Frazzoli.ACC15} and are omitted due to space limitations. From the perspective of limiting case approximations, the discrete-time filter in \cite{Yong.Zhu.ea.Automatica16} is globally optimal and converges to a steady-state solution for arbitrary $\Delta t$, and the Euler approximation converges to the continuous system. So, from the optimality of the Kalman-Bucy filter, it can be inferred that the limiting case filter is also optimal.

\subsection{Proof of Theorem \ref{thm:main2}}%

Since we have shown the equivalence of the state estimation of ELISE and ALISE in Appendix \ref{sec:equivAlg}, the optimality of the state estimation of ALISE in the minimum-variance unbiased sense given in Theorem \ref{thm:main2} follows directly from Theorem \ref{thm:main}. If the initial state estimate is biased, Lemmas \ref{lem:convergence}, \ref{lem:convergenceInput}, \ref{lem:convergence2} and \ref{lem:convergenceInput2} show that the associated state and unknown input biases decay exponentially.

\subsection{Proof of Theorems \ref{thm:stable} and \ref{thm:stable2}} \label{sec:proofThm4}
In Appendix \ref{sec:proof1}, we showed that the system with unknown inputs in \eqref{eq:system} is equivalent to the `virtual' system without unknown input in \eqref{eq:equiv1}. However, the noise terms of this new form are correlated, i.e., $\mathbb{E}[\overline{w}(t) v_2\top(t')]=-G_2 M_2  \grave{R}_2^\top \neq 0$. Hence, we further transform the system into one with uncorrelated noise terms by employing a common trick (cf., e.g., \cite[p. 182]{Crassidis.2004}) of adding a zero term ($y_e-C_2 x_e-v_2=0$) to obtain yet another `virtual' equivalent system in \eqref{eq:eqSys}.
 Thus, we can analogously apply the results of the Kalman-Bucy filter \cite[Theorem 4]{kalman.bucy.1961} to obtain the necessary assumption (A2-A5) given in Theorems \ref{thm:stable} and \ref{thm:stable2}, such that the optimal filter is uniformly asymptotically stable, and that the variance equation converges to a unique behavior for large $t$, independent of $\mathcal{P}^x_0$.

\subsection{Proof of Theorems \ref{thm:conv} and \ref{thm:conv2}}

For linear time-invariant systems, the conditions for the convergence of the filter gains to steady-state of the proposed filter are closely related to the existence and uniqueness of stabilizing solutions of its continuous-time algebraic Riccati equation (CARE), i.e. \eqref{eq:riccati} with $\dot{P^x}=0$. Since we have shown in Appendix \ref{sec:proof1} and \ref{sec:proofThm4} that we can transform the estimation of a system with unknown inputs to a `virtual' equivalent system with no unknown inputs, analogous convergence properties to the steady-state Kalman-Bucy filter apply, as summarized in Theorem \ref{thm:conv}. For a proof of the results of Kalman-Bucy convergence properties, the reader is referred to \cite{kailath.2000}.

\subsection{Proof of Proposition \ref{remark}}

The connection between strong observability and the observability of $(\overline{A},C_2)$, as well as $C_2$ and $G_2$ being full rank 
follow directly from 
%
{\fontsize{9}{9}\selectfont
\begin{align*}
&n+p={\rm rk}\begin{bmatrix} sI-A & -G \\ C & H \end{bmatrix} ={\rm rk}\begin{bmatrix} sI-A & -G \\ C & U \begin{bmatrix} \Sigma & 0 \\ 0 & 0 \end{bmatrix} V^\top \end{bmatrix} \\ &= {\rm rk} \begin{bmatrix} I & 0 \\ 0 & T \end{bmatrix} \begin{bmatrix} sI-A & -G \\ C & U \begin{bmatrix} \Sigma & 0 \\ 0 & 0 \end{bmatrix} V^\top \end{bmatrix} \begin{bmatrix} I & 0 \\ 0 & V \end{bmatrix} 
\hspace{-0.1cm}=\hspace{-0.05cm} {\rm rk}\hspace{-0.05cm}  \begin{bmatrix} sI-A & -G_1 & -G_2\\ C_1 & \Sigma & 0\\ C_2 & 0 & 0 \end{bmatrix} \hspace{-0.1cm}\\
&=\hspace{-0.05cm}{\rm rk}\hspace{-0.05cm} \begin{bmatrix} I & G_1\Sigma^{-1} & 0 \\ 0 & I & 0\\ 0 & 0 & I\end{bmatrix}\hspace{-0.05cm} \begin{bmatrix} sI-A & -G_1 & -G_2 \\ C_1 & \Sigma &0 \\ C_2 & 0 & 0 \end{bmatrix} \hspace{-0.1cm}=\hspace{-0.05cm}{\rm rk}\hspace{-0.05cm}\begin{bmatrix} sI-\hat{A} & 0 & -G_2 \\ C_1 & \Sigma & 0\\ C_2 & 0 & 0 \end{bmatrix} \end{align*} \begin{align*} 
&={\rm rk}\begin{bmatrix} sI-\hat{A} & -G_2 \\ C_2 & 0 \end{bmatrix}\hspace{-0.05cm}+\hspace{-0.05cm}p_H \hspace{-0.05cm}=\hspace{-0.05cm}{\rm rk}\begin{bmatrix} sI-\hat{A} & -G_2 \\ C_2 & 0 \end{bmatrix}\hspace{-0.05cm}\begin{bmatrix} I  & 0 \\ -M_2 C_2 \hat{A} & I \end{bmatrix}\hspace{-0.05cm}+\hspace{-0.05cm}p_H\\& 
={\rm rk}\begin{bmatrix} sI-\overline{A} & -G_2 \\ C_2 & 0 \end{bmatrix}\hspace{-0.05cm}+\hspace{-0.05cm}p_H,
\end{align*}}
\noindent \hspace{-0.2cm}where the first equality is the rank condition for strong observability given in \cite{Hautus.1983}, the third from last equality holds because $\Sigma$ is square and has full rank $p_H$, and we have assumed that $n \geq l \geq 1$ and $l \geq p \geq 0$.

\end{document}